\documentclass{article}
\usepackage{graphicx}
\graphicspath{graphics/}
\usepackage{amsfonts}
\usepackage{mathrsfs}
\usepackage{float}
\usepackage{bm}
\usepackage{nicematrix}
\usepackage{mathdots}
\usepackage{array}
\usepackage{tikz}
\usepackage{verbatim}
\usepackage{tikz-cd}
\usepackage{booktabs}
\usepackage{enumitem}
\usepackage{amsmath}
\usepackage{amssymb}
\usepackage{amsthm}

\theoremstyle{plain}
\newtheorem{theorem}{Theorem}[section]
\newtheorem{proposition}[theorem]{Proposition}
\newtheorem{lemma}[theorem]{Lemma}
\newtheorem{corollary}[theorem]{Corollary}

\theoremstyle{definition}

\theoremstyle{remark}
\newtheorem{remark}[theorem]{Remark}

\newtheorem{example}[theorem]{Example}
\allowdisplaybreaks

\title{Complexity of quantum cohomology}
\author{Xiaobo Liu \thanks{Research was partially supported by NSFC grants 12341105 and 12526302.}, \,\,\,  Chongyu Wang}
\date{}

\begin{document}

\maketitle

\begin{abstract}
      Circuit complexity for two-dimensional topological quantum field theories (2D TQFT) was defined by Couch, Fan, and Shashi in \cite{CFS}. In this paper, we study complexity for the 2D TQFT given by quantum cohomology of compact symplectic manifolds. We will estimate the number of states with finite approximate complexity of arbitrarily small tolerance for Fano complete intersections and (co)minuscule homogeneous varieties. We will give an upper bound for the dimension of the space spanned by states with finite complexity. In the case of ${\rm Gr}(2,n)$, this bound is sharp and we also obtain a precise description for this subspace. For (co)minuscule homogeneous varieties, we prove a positivity result for the eigenvalues of quantum multiplication by the handle element (also called the quantum Euler class) divided by the class of a point.
\end{abstract}

\section{Introduction}

In quantum computation, the circuit complexity of a unitary operator (quantum algorithm) is defined to be the minimal number of elementary operators (called gates) needed to construct the given operator. In \cite{HH}, Harlow and Hayden proposed to use quantum complexity to study black holes. Complexity also plays an important role in the study of AdS/CFT duality (see, for example, \cite{S}, \cite{SS}, \cite{BRSSZ}). This raised the question of how to define complexity for conformal field theories. A special class of conformal field theories are given by two-dimensional topological quantum field theories (2D TQFT) (see, for example, \cite{RFFS}).
In \cite{CFS}, Couch, Fan and Shashi defined a notion of circuit complexity for 2D TQFT.
The purpose of this paper is to study complexity of 2D TQFT given by quantum cohomology of compact symplectic manifolds.

The quantum cohomology of a compact symplectic manifold $X$ is a deformation of ordinary cohomology ring of $X$, defined by
genus-$0$ Gromov-Witten invariants of $X$ (cf. \cite{RT} and \cite{LiT}). In this paper, we will use $H^*(X)$ to denote
the cohomology ring of $X$ with coefficient $\mathbb{C}$. Quantum cohomology gives a Frobenius manifold structure on $H^*(X)$ (cf. \cite{Du} and \cite{Ma}).  In particular, it gives a family of Frobenius algebra structures on $H^*(X)$
(Rigorously speaking, this is true for Fano varieties. In general, we should work over the Novikov ring of $X$ instead of
$\mathbb{C}$). It is well known that 2D TQFTs are one-to-one correspondent to Frobenius algebras (see, for example, \cite{K}). Therefore quantum cohomology of $X$ gives a family of 2D TQFTs and we have a natural notion of complexity for quantum cohomology following the definition given in \cite{CFS}. In this paper, we mainly consider complexity for small quantum cohomology which is defined by $3$-point genus-0 Gromov-Witten invariants.

For a general 2D TQFT, the complexity is defined using the handle operator of the corresponding Frobenius algebra,
which serves as the unique elementary gate in the definition of circuit complexity. In the case of quantum cohomology, the {\it handle operator} is given by the quantum multiplication of an element $\Delta \in H^*(X)$
which has the form
\begin{equation} \label{eqn:Delta}
\Delta :=\sum_{i,j} g^{ij} e_i * e_j,
\end{equation}
where $\{ e_i \}$ is a basis of $H^*(X)$, $g^{ij}$ are entries for the inverse matrix of
Poincar\'e intersection pairing on $H^*(X)$ with respect to this basis, and $*$ in equation \eqref{eqn:Delta} is the quantum multiplication.
A 1-party {\it state} for the 2D TQFT given by the quantum cohomology is an element in $\mathbb{P}H^*(X)$, which is the projectivization of $H^*(X)$. For any $z \in H^*(X)$, we use $[z] \in \mathbb{P}H^*(X)$
to represent the image of $z$ under the natural projection from $H^*(X)$ to $\mathbb{P}H^*(X)$.
Define $\Delta * [z] := [\Delta * z]$ for $z \in H^*(X)$ and $\Delta * z \neq 0$. Fix a reference state
$S_0 \in \mathbb{P}H^*(X)$. The (exact) {\it complexity} of any state $S \in \mathbb{P}H^*(X)$, denoted by
${\cal C}_{S_0}(S)$,  is defined to be
the smallest non-negative integer $k$ such that $\Delta^{*k} * S_0 = S$, where
$\Delta^{*k}$ is the $k$-th quantum power of $\Delta$ (i.e. quantum multiplication of $\Delta$ with itself $k$ times). If such $k$ does not exist, we define
${\cal C}_{S_0}(S) = \infty$.

Since the set of states with finite complexity is always a countable set, it follows that most states have infinite complexity. So it is necessary to consider {\it approximate complexity} ${\cal C}_{S_0}(S; \epsilon)$ with {\it tolerance} $\epsilon > 0$, which is defined to be the smallest non-negative integer $k$ such that
$d(\Delta^{*k} * S_0,  S) \leq \epsilon$, where $d$ is the distance between two states defined by a fixed metric on $\mathbb{P}H^*(X)$. It is an interesting question how large is the set of states which have finite
approximate complexity with arbitrarily small tolerance. More precisely, let
\begin{equation} \label{eqn:SInfty}
\mathfrak{S}_\infty := \{ S \in \mathbb{P}H^*(X) \mid {\cal C}_{S_0}(S) = \infty \,\,\, {\rm and}
        \,\,\, {\cal C}_{S_0}(S; \epsilon) < \infty \,\,\, {\rm for \,\,\, all \,\,\,} \epsilon > 0 \}.
\end{equation}
This set is contained in the closure of the set of all states with finite complexity and therefore is
independent of the choice of the metric on $\mathbb{P}H^*(X)$.
We would like to investigate how large is $\mathfrak{S}_\infty$.
Note that the set of states with finite exact complexity is usually infinite (see, for example, the
case of quadrics in Section \ref{subsect: even quadrics}). It could happen that its limiting set $\mathfrak{S}_\infty$ might be even larger. In fact, as shown in \cite{CFS}, for a class of semisimple 2D TQFT, $\mathfrak{S}_\infty$ may contain a torus with positive dimension and is therefore uncountable.
In this paper, we will show  that $\mathfrak{S}_\infty$ turns out to be very small for quantum cohomology of Fano complete intersections and (co)minuscule homogeneous varieties. These homogeneous varieties include Grassmannians, quadrics, Lagrangian Grassmannians, orthogonal Grassmannians, Cayley plane, and Freudenthal variety (cf. \cite{BL}).
\begin{theorem}\label{thm: main thm 1}
    For the small quantum cohomology of all (co)minuscule homogeneous varieties and Fano complete intersections of complex dimension bigger than $2$, $\mathfrak{S}_\infty$ is
    a finite set for arbitrary reference state. Moreover, the number of points in $\mathfrak{S}_\infty$ is less than or equal to the order of the point class
    for (co)minuscule homogeneous varieties and is less than or equal to 2 for Fano complete intersections.
\end{theorem}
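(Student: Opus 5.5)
The plan is to reduce the statement to linear algebra of the operator $\Delta *$ on $H^*(X)$ and then feed in spectral information about the handle element for the two classes of varieties. Decompose $H^*(X)$ into generalized eigenspaces of quantum multiplication by $\Delta$, write $S_0=[z_0]$, and study the accumulation points of $[\Delta^{*k}*z_0]$ in $\mathbb{P}H^*(X)$. First, a general lemma: $\mathfrak{S}_\infty$ is contained in the set of limit points of the orbit $\{[\Delta^{*k} * z_0]\}_{k\ge 0}$ (a point of $\mathfrak{S}_\infty$ has infinite exact complexity, so the approximating indices $k$ must tend to infinity). Next, expand $\Delta^{*k}*z_0$ in a Jordan basis. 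The dominant terms as $k\to\infty$ come from the eigenvalues $\lambda$ of maximal modulus among those for which $z_0$ has a nonzero component, and within these from the largest Jordan block power $k^m$. After normalizing, the orbit asymptotically equals $\big[\sum_{|\lambda|=r_{\max}} (\lambda/|\lambda|)^k v_\lambda\big]$ for certain fixed vectors $v_\lambda$. Its limit set is therefore the closure of the image of the sequence $(\lambda/|\lambda|)^k$ in a torus modulo the diagonal scalar. This set is finite exactly when all ratios $\lambda/\lambda'$ of the maximal-modulus eigenvalues are roots of unity, and then its size is at most the order of the subgroup they generate. Everything thus reduces to understanding the eigenvalues of $\Delta*$ of maximal modulus, or more precisely to doing this uniformly over the subsets of eigenvalues selected by different $z_0$. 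That uniformity is why I want a statement about \emph{all} eigenvalues, not only the top ones.

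For (co)minuscule $G/P$, quantum cohomology is semisimple at $q=1$. The eigenvalues of $\Delta*$ are $\sum_i g^{ii}$-type expressions, namely $\Delta$ evaluated at the idempotents: $\Delta(\psi)=1/\eta(e_\psi,e_\psi)$, the inverse of the norm of the idempotent. The key step is to show that $\Delta = c\,\mathrm{pt}*u$ with $u$ having positive real eigenvalues. This is the positivity result for $\Delta/\mathrm{pt}$ announced in the abstract, which I would prove via the Peterson/Rietsch presentation or the explicit idempotents of Grassmannians using Vandermonde-type formulas. Then $\lambda/\lambda'$ is a ratio of eigenvalues of $\mathrm{pt}*$. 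Since $\mathrm{pt}^{*N}=q^{d}$ is a scalar, where $N$ is the order of the point class, the ratio of any two of its eigenvalues is an $N$-th root of unity. So the unit-circle parts of the maximal-modulus eigenvalues lie in a coset of $\mu_N$, and the limit set has at most $N$ points. The caveat is that the positive real parts must also coincide among the top eigenvalues, which they do by the definition of maximal modulus.

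For Fano complete intersections of dimension $>2$, I would use the known structure of the small quantum cohomology of the ambient part: the quantum multiplication by $H$ has eigenvalues $0$ (possibly with nilpotent part) and $T\zeta$, with $\zeta$ ranging over the $(n+1-\sum d_i)$-th roots of unity, as in the Galkin–Golyshev-type computations. $\Delta$ is a polynomial in $H$ on the ambient part, and it is constant or nilpotent-shifted on primitive cohomology, with $\Delta=\chi(X)\,\mathrm{pt}+\dots$. I would compute $\Delta$ via the Euler characteristic and the $H$-symmetry $\zeta\mapsto$ multiplication, which should show that the nonzero eigenvalues have modulus structure forcing at most two directions up to scalar, namely one maximal eigenvalue, possibly together with its negative. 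This is where "$\le 2$" comes from. The main obstacle is this step: controlling eigenvalues of $\Delta$ for complete intersections, including the non-semisimple primitive part. Jordan blocks there only contribute a single limit direction, via the $k^m$ term, and do not enlarge the count. The dimension restriction presumably excludes degenerate small cases where the ratios fail to be roots of unity.
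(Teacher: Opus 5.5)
Your general reduction is sound: accumulation points of $[\Delta^{*k}*z]$ are governed by the eigenvalues of maximal modulus and maximal Jordan degree among those that $z$ sees. It is essentially the paper's appendix, Theorem~\ref{thm:RealEigenvalue} and Remark~\ref{rem:sizeSinftyM}.

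\textbf{The gap: Fano complete intersections.} This half is not carried out, and the mechanism you predict is wrong.

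\emph{Missing formula for $\Delta$.} You never obtain $\Delta$. The paper uses Cela's formula $\Delta=\mathbf{m}^{-1}\chi(X)H^{*r}+(\tau-\chi(X))\mathbf{m}^{\mathbf{m}-1}qH^{*\kappa}$, with $\kappa=|\mathbf{m}|-L-1$, when $|\mathbf{m}|\le r+L-1$. When $|\mathbf{m}|=r+L$ it uses an analogous polynomial in $\widehat{H}=H+\mathbf{m}!\,q$.

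\emph{Wrong spectrum in index one.} There it is $\widehat{H}$, not $H$, that has eigenvalues $0$ and $\mathbf{m}^{\mathbf{m}}$ on the restricted part.

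\emph{Wrong count for $|\mathbf{m}|\le r+L-1$ (non-quadric).} The claim ``one maximal eigenvalue, possibly with its negative'' is false here. At $q=1$, where $H$ acts by $T\zeta$ with $T^\tau=\mathbf{m}^{\mathbf{m}}$ and $\zeta\in\mu_\tau$, the eigenvalue of $\Delta$ is $\tau\mathbf{m}^{\mathbf{m}-1}T^{\kappa}\zeta^{\kappa}$. This gives $\tau/\gcd(\tau,r)$ distinct nonzero eigenvalues of equal modulus. For the cubic fourfold ($r=4$, $\mathbf{m}=(3)$, $\tau=3$, $\kappa=1$) they are $81,81\omega,81\omega^2$, so your framework only yields $|\mathfrak{S}_\infty|\le 3$.

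\emph{The missing idea is exact periodicity.} $\Delta$ kills primitive classes, because $H*A=0$, and it is nilpotent on the generalized $0$-eigenspace of $H$. Concretely, $\Delta^{*j}=(\tau\mathbf{m}^{-1})^jH^{*jr}$ for $j\ge 2$, and $[H^{*n}]$ is eventually periodic by Givental's relation. So the orbit of any $[z]$ is finite and $\mathfrak{S}_\infty=\emptyset$.

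\emph{Index one.} This is where the real-spectrum argument is actually needed. $\Delta$ is a real polynomial in $\widehat{H}$, so its eigenvalues on $H^*(X)^{\mathrm{res}}$ are the two real numbers $\alpha,\beta$. The same holds on the $\Delta$-invariant subspace $z*H^*(X)^{\mathrm{res}}$, which contains $z$ and is the image of the $\Delta$-equivariant map $x\mapsto z*x$. Working in that subspace is how the paper handles reference states with primitive components before applying the real-eigenvalue lemma.

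\emph{The hypothesis $r\ge 3$.} It is there to give $H_2(X,\mathbb{Z})\cong\mathbb{Z}$, not to exclude non-root-of-unity ratios.

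\textbf{The (co)minuscule half.} Your deduction of the bound from positivity is correct and equivalent to the paper's. Write $\theta$ for the order of the point class (your $N$). Since $[pt]^{*\theta}=\mathbf{1}$ at $q=1$, the eigenvalues of $\Delta$ are $[pt](\psi)\,u(\psi)$ with $[pt](\psi)\in\mu_\theta$ and $u(\psi)>0$. Hence ratios of top-modulus eigenvalues lie in $\mu_\theta$. The paper instead observes that $\Delta^{*\theta}$ has matrix $A^\theta$ with positive spectrum, and sorts the exponents by residue mod $\theta$.

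Positivity of $\Delta/[pt]$ is the real content, though. If you may cite Theorem~\ref{thm:eigenvalue>0}, this half is complete. Otherwise your Vandermonde plan covers only Grassmannians, and ``Peterson/Rietsch'' is not yet an argument for quadrics, $LG$, $OG$, $E_6/P_1$ or $E_7/P_7$.

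The paper's uniform proof uses strange duality. In the basis $\tilde\sigma_w=y_0^{l(w)/(2\tau)}y(w)^{-1/2}q^{-l(w)/\tau}\sigma_w$, the matrix of $\Delta/[pt]$ is the Gram form $\sum_{w,z}L_{w,z}(x)L_{w,z}(y)$, built from nonnegative Gromov--Witten invariants. It is therefore positive semidefinite, and it is nonsingular because $\Delta$ is invertible (Abrams' criterion plus semisimplicity).
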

\noindent
This theorem covers both semisimple and non-semisimple examples. In fact,
it is proved in \cite{CMP3} that the small quantum cohomology of minuscule and cominuscule homogeneous varieties is always semisimple.
But there are many Fano complete intersections whose small quantum cohomology is not semisimple (see Remark \ref{rm: semisimp of fci}).
Theorem \ref{thm: main thm 1} is a combination of Theorem \ref{thm: main thm for finiteness} and
Theorem \ref{thm:ComplexityFCI}. It is an interesting question whether this finiteness result holds for a larger class of manifolds, for example, for all Fano varieties or all manifolds with semisimple quantum
cohomology.

As suggested in \cite{CFS}, a natural choice for the reference state is $S_0= [\mathbf{1}]$, where $\mathbf{1}$ is the identity of the ordinary cohomology ring and is also the unit element of the Frobenius algebra defined by quantum cohomology. In this case, the set of all states with finite complexity is precisely $\{ [\Delta^{*k}] \mid k \geq 0 \}$.
Besides complexity, there are also many other interests in studying $\Delta^{*k}$. In small quantum cohomology, the handle element $\Delta$ was first studied by  Abrams in \cite{A}, and it was called the quantum Euler class (In this paper we will not use this terminology since it might be confused with Euler vector field in Gromov-Witten theory).
Abrams showed that the invertibility of quantum multiplication of $\Delta$ is equivalent to the semi-simplicity of the small quantum cohomology. In \cite{BuP}, Buch and Pandharipande proved that all Tevelev degrees in Gromov-Witten theory are encoded in the
quantum product of the point class and $\Delta^{*k}$, $k \geq 0$.

Note that $\Delta$ is also well defined in the big quantum cohomology which depends on all genus-0 primary Gromov-Witten invariants. In this case, $\Delta$ is considered as a vector field on $H^*(X)$. This vector field was introduced
in \cite{L} and it was called the quantum volume element.
In \cite{L1}, it was showed that the genus-1 Virasoro conjecture for Gromov-Witten invariants can be reduced
to a single equation $E \Psi = 0$ where $\Psi$ is a function on $H^*(X)$ depending on both genus-1 and genus-0 Gromov-Witten invariants, $E$ is the Euler vector field on $H^*(X)$ whose restriction to the origin
is the first Chern class of $X$. Note that $E$ has nothing to do with $\Delta$ and the confusion of terminology is due to  historic reasons. It was proved in \cite{L} that $\Delta \Psi =0$. This result was generalized in \cite{Lin} to the form
$(\Delta*V) \Psi = 0$ for all vector field $V$ on $H^*(X)$.  In particular, these results show that
if $E$ lies in the following space
\begin{equation} \label{eqn:F}
\mathfrak{F} := {\rm Span}\{ \Delta^{*k} \mid k \geq 0 \},
\end{equation}
then the genus-1 Virasoro conjecture holds.

For the above reasons, it is interesting to investigate how large $\mathfrak{F}$ is. In particular,
the dimension of $\mathfrak{F}$ is a numerical invariant for the symplectic structure of $X$.
In this paper, we will give an upper bound for the dimension of $\mathfrak{F}$ for
a class of compact symplectic manifold $X$ with $H_2(X, \mathbb{Z}) \cong \mathbb{Z}$ and $H^{\rm odd}(X)=0$.
For such manifolds, the quantum product defines a ring structure on $QH^*(X)_q = H^*(X) \otimes \mathbb{C}[q,q^{-1}]$
where $q$ is a formal parameter which can be assigned a (complex) degree $\tau=\deg(q)$ such that
the quantum multiplication preserves the total degree.
Define $\mathfrak{F}_q := {\rm Span}_{\mathbb{C}[q,q^{-1}]} \{\Delta^{*k} \mid k \geq 0\}$.
For Fano varieties, we can recover $\mathfrak{F}$ from $\mathfrak{F}_q $ by setting $q=1$.
We have
\begin{theorem}\label{thm: main thm 2}
Let  $X$ be any compact symplectic manifold with $H_2(X, \mathbb{Z}) \cong \mathbb{Z}$ and $H^{\rm odd}(X)=0$.
Assume either $\Delta$ or the class of a point $[pt]$ is invertible in $QH^*(X)_q$. Then
the rank of $\mathfrak{F}_q$ over $\mathbb{C}[q,q^{-1}]$ has the following upper bound:
\[
 \mathrm{rank} (\mathfrak{F}_q) \leq \frac{\tau}{{\rm gcd}(\tau, \dim(X))}
                                \sum_{i=0}^{\lfloor \dim(X)/\tau \rfloor} \dim H^{2i \tau}(X),
\]
where $\dim(X)$ is a half of the real dimension of $X$.
Moreover, for $Gr(2,n)$, this inequality turns out to be an equality  and we obtain a precise formula for
$\dim \mathfrak{F}$.
\end{theorem}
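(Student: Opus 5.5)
We plan to prove the bound by combining the grading of $QH^*(X)_q$ with a periodicity of $\Delta$ modulo the point class. Put $n=\dim(X)$. Since $H^{\rm odd}(X)=0$ and the pairing has degree $-n$, the handle element $\Delta=\sum g^{ij}e_i*e_j$ is homogeneous of complex degree $n$. Hence $\Delta^{*k}$ is homogeneous of degree $kn$. Writing $\Delta^{*k}=\sum_{j} q^{j}\,c_{k,j}$ with $c_{k,j}\in H^{2(kn-j\tau)}(X)$, only finitely many $j$ occur. This suggests working with homogeneous components.

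The key step is to show that the $\mathbb{C}[q,q^{-1}]$-span of the $\Delta^{*k}$ lies in a module of controlled rank. Two homogeneous elements whose degrees differ by a multiple of $\tau$ can be compared after multiplying by a power of $q$. So the degrees that matter are the residues $kn \bmod \tau$. These residues take exactly $\tau/\gcd(\tau,n)$ values, namely the multiples of $g=\gcd(\tau,n)$ modulo $\tau$.

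For a fixed residue class $r$, the elements of $QH^*(X)_q$ homogeneous of degree $\equiv r \pmod \tau$ form, up to powers of $q$, a free module. Its rank is $\sum_{m\equiv r \ (\tau)} \dim H^{2m}(X)$. Any family of homogeneous elements in that class spans a submodule of rank at most this number. So
\[
\mathrm{rank}\,\mathfrak{F}_q\le \sum_{r\in g\mathbb{Z}/\tau\mathbb{Z}}\ \sum_{m\equiv r\ (\tau)}\dim H^{2m}(X).
\]
This is not yet the stated bound. The stated bound has the uniform factor $\tau/g$ times the dimension count for the residue $0$.

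To reach it, we use the invertibility hypothesis. If $[pt]$ is invertible, then $[pt]$ has degree $n$, just like $\Delta$. Then $u=\Delta*[pt]^{-1}$ has degree $0$, and $\Delta^{*k}=u^{k}*[pt]^{*k}$. The elements $u^k$ lie in the degree-$0$ part $A_0=\bigoplus_{i}q^{-i}H^{2i\tau}(X)$. This $A_0$ is a finite-dimensional $\mathbb{C}$-algebra of dimension $D:=\sum_{i=0}^{\lfloor n/\tau\rfloor}\dim H^{2i\tau}(X)$.

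Next we group $k$ by residue modulo $\tau/g$. For $k=k_0+l\tau/g$, the element $[pt]^{*\tau/g}$ has degree $n\tau/g$, which is a multiple of $\tau$. So $[pt]^{*\tau/g}=q^{n/g}w$ with $w\in A_0$ invertible. Therefore $\Delta^{*k}\in q^{(\cdot)}\,[pt]^{*k_0}*A_0$, which is a $\mathbb{C}$-space of dimension at most $D$ up to $q$-scaling. Summing over the $\tau/g$ residues $k_0$ gives rank at most $(\tau/g)D$, as claimed.

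If instead $\Delta$ is invertible, the same argument works with $\Delta$ in place of $[pt]$. Here $\Delta^{*\tau/g}=q^{n/g}v$ with $v\in A_0$. Then each residue class of $k$ gives $\Delta^{*k}\in \Delta^{*k_0}*q^{(\cdot)}A_0$, and invertibility of $\Delta$ is not even needed beyond this. We will double-check which hypothesis is genuinely required, perhaps only to make the map $A_0\to \Delta^{*k_0}A_0$ well behaved.

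For the equality for $Gr(2,n)$, we will use the Siebert–Tian presentation and its semisimplicity. We will compute the eigenvalues of $\Delta$ via Bethe-type roots, where $\Delta$ is proportional to a Vandermonde-like expression in the roots $\zeta_i$. Equality of ranks amounts to the number of distinct values of $u$ on idempotents within each orbit class equalling $D$. This is done by showing that distinct eigen-directions give distinct eigenvalues of $u$ up to roots of unity.

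We expect this distinctness (separating eigenvalues of $\Delta/[pt]$) to be the main obstacle. We would attack it through explicit formulas for the eigenvalues as products $\prod|\zeta_a-\zeta_b|^2$-type expressions over pairs of $n$-th roots of unity.
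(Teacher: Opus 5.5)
Your proof of the inequality is correct, and it takes a different route from the paper.

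\textbf{The inequality.} The paper first shows $\mathfrak{F}_q\subset\bigoplus_{j\equiv 0\ (\mathrm{mod}\ D_X)}V_j\otimes\mathbb{C}[q,q^{-1}]$. It then uses an invertible homogeneous element of degree $\dim(X)$ (namely $\Delta$ or $[pt]$) to build isomorphisms $V_j\otimes\mathbb{C}[q,q^{-1}]\cong V_0\otimes\mathbb{C}[q,q^{-1}]$, so each summand has rank $\dim V_0$.

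You instead split the exponents $k$ by their residue modulo $\tau/g$. The element $\Delta^{*\tau/g}$ has degree $(\dim(X)/g)\tau$, so it equals $q^{\dim(X)/g}v$ with $v$ in the degree-zero subalgebra $A_0=\bigoplus_i q^{-i}H^{2i\tau}(X)$, which has dimension $\dim V_0$.

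Your suspicion is right: the inclusion $\Delta^{*k}\in q^{l\dim(X)/g}\,\Delta^{*k_0}*A_0$ uses no invertibility at all. So your argument proves the bound unconditionally, and the detour through $\Delta/[pt]$ is unnecessary. What the paper's route buys is the explicit ambient module $\bigoplus_j V_j\otimes\mathbb{C}[q,q^{-1}]$, which is exactly what it later shows equals $\mathfrak{F}_q$ for $Gr(2,n)$.

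\textbf{The gap: the $Gr(2,n)$ half.} The equality for $Gr(2,n)$ and the formula $\dim\mathfrak{F}=\frac{n}{\gcd(4,n)}\lfloor n/2\rfloor$ are only outlined, not proved. Moreover, the criterion you propose to verify is false as stated.

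Set $q=1$ and index the roots of $x^n=-1$ as $\zeta_a=e^{\pi i(2a+1)/n}$. Because $\Delta/[pt]$ has degree $0$, its eigenvalue at the idempotent of $\{\zeta_a,\zeta_b\}$ is unchanged when both roots are rotated by $e^{2\pi i/n}$. In fact the Vafa--Intriligator residue formula gives the value $n^2/|\zeta_a-\zeta_b|^2=n^2/(4\sin^2(\pi d/n))$, where $d\in\{1,\dots,\lfloor n/2\rfloor\}$ is the circular distance. So roughly $n$ idempotents share each value, and distinct eigen-directions do not give distinct eigenvalues.

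What actually has to be established is the following.
\begin{enumerate}
\item[(a)] The $m=\lfloor n/2\rfloor$ values above are pairwise distinct, and $\mathbf{1}$ has a nonzero component in each eigenspace of $\Delta/[pt]$ on $V_0$. At $q=1$, $V_0$ is the space of rotation-invariant functions on the idempotents; it is spanned by the orbit indicators, and $\mathbf{1}$ is their sum. A Vandermonde argument then gives $V_0\otimes\mathbb{C}[q,q^{-1}]\subset\mathfrak{F}_q$.
\item[(b)] You need a mechanism to reach the remaining $V_j$ with $j\equiv 0\pmod{\gcd(4,n)}$. One option is the paper's: invertibility of $\Delta$ together with stability of $\mathfrak{F}_q$ under $\Delta*$. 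Another is to use that in the semisimple algebra $\dim\mathfrak{F}$ equals the number of distinct eigenvalues of $\Delta$. Here those eigenvalues are $n^2/(4\sin^2(\pi d/n))$ times the root of unity $(\zeta_a\zeta_{a+d})^{-2}$, which depends on $4a+2d+2$ modulo $n$. Counting gives $\lfloor n/2\rfloor\cdot n/\gcd(4,n)$ values.
\end{enumerate}
Neither step, nor the eigenvalue formula itself, appears in your proposal.

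The paper avoids Bethe roots altogether. It expands $\Delta$ in Schubert classes and obtains the matrix $A_{ij}=(2i-1)b_j$ ($i\le j$) of $\Delta/[pt]$ on $V_0$. It then combines positive definiteness with an induction showing every eigenvector has nonzero first coordinate, which gives simple spectrum and cyclicity of $\mathbf{1}$ at once.
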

\noindent
Theorem \ref{thm: main thm 2} is a combination of Theorem \ref{thm:DimBoundX} and Theorem \ref{thm: case of Gr(2,n)}.
To get more information for $\mathfrak{F}_q$,  we will give a free submodule of $QH^*(X)_q$ which contains $\mathfrak{F}_q$ and whose rank is equal to the right hand side of inequality in Theorem \ref{thm: main thm 2}
(see Theorem \ref{thm: subspace spanned by Delta^i}). We will prove in Theorem \ref{thm: case of Gr(2,n)}
that this submodule is equal to $\mathfrak{F}_q$ for $Gr(2,n)$.
Hence we have a precise description of $\mathfrak{F}_q$ for $Gr(2,n)$.
From this result and the result in Section \ref{sec:Pn}, we have $\mathfrak{F} = H^*(X)$ if $X$ is a projective space or $Gr(2, n)$ with $n$ odd.
For other Grassmannians, it is more complicated to describe $\mathfrak{F}$. We will give an upper
bound for the dimension of $\mathfrak{F}$ in terms of numbers of restricted partitions in
Theorem \ref{thm: estimation of dimension of Span(Delta^i)}.
We will also prove that asymptotically, $\dim(\mathfrak{F}) / \dim H^*(Gr(k,n))$ is bounded from
 above by $1/{\rm gcd}(n, k^2)$
(see Corollary \ref{cor:SymptBound}).
This bound shows that for many $Gr(k,n)$, the dimension of $\mathfrak{F}$ could be much smaller than the dimension of
$H^*(Gr(k,n))$.
For Fano complete intersections, the dimension of  $\mathfrak{F}$ can be computed precisely. The answers will be given in
Remark \ref{rem:M<LR-1}, Propositions \ref{prop: m=r+L hprim=0 finite states} and \ref{prop:dimFCI}.

In the study of complexity, eigenvalues of quantum multiplication by $\Delta$ play an important role. In general, such eigenvalues could be very complicated.
In \cite{BuP}, Buch and Pandharipande proved that for (co)minuscule varieties, quantum multiplication by $\Delta/[pt]$ is always diagonalizable over $\mathbb{R}$. In this paper, we will prove the following positivity result which is needed in the proofs of Theorems \ref{thm: main thm 1} and \ref{thm: main thm 2}.
\begin{theorem} \label{thm:eigenvalue>0}
For any (co)minuscule homogeneous variety,
all eigenvalues of quantum multiplication by $\Delta/[pt]$ are positive real numbers.
\end{theorem}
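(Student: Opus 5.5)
Since the small quantum cohomology $QH^*(X)$ of a (co)minuscule variety is semisimple \cite{CMP3}, I will work in the basis of idempotents. Set $q=1$ (Fano case). Let $\psi_1,\dots,\psi_N$ be the characters of $QH^*(X)$, i.e.\ the evaluations at the points of $\mathrm{Spec}\,QH^*(X)$, and let $\epsilon_a$ be the corresponding idempotents with $g(\epsilon_a,\epsilon_a)=\eta_a$. A standard computation from the definition \eqref{eqn:Delta} shows that $\Delta=\sum_a \eta_a^{-1}\epsilon_a$. Hence $\psi_a(\Delta)=1/\eta_a$.

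On the other hand, $[pt]$ is the Poincar\'e dual of $\mathbf{1}$, so $g([pt],\epsilon_a)=\int_X\epsilon_a$. This gives $\psi_a([pt])\,\eta_a=\int_X \epsilon_a$. Writing $\int_X\epsilon_a=\eta_a\psi_a([pt])$, the eigenvalues of $\Delta/[pt]$ become
\[
\lambda_a=\frac{1}{\eta_a\,\psi_a([pt])}.
\]
Since $\epsilon_a=\epsilon_a^2$, we also have $\eta_a=g(\epsilon_a,\mathbf{1})=\int_X\epsilon_a$, which yields the cleaner formula $\lambda_a = 1/(\eta_a^2\psi_a([pt]))$. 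So positivity amounts to showing that $\eta_a^2\,\psi_a([pt])>0$ for every $a$. Equivalently, $\lambda_a=\sum_i \psi_a(e_i)\psi_a(e^\vee_i)/\psi_a([pt])$, where $e^\vee_i$ is the dual basis.

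The key input is the known description of the spectrum for (co)minuscule $X$. I would use the work of Chaput--Manivel--Perrin, in the spirit of Rietsch: quantum multiplication by $[pt]$ acts on the Schubert basis as a ``rotation'' up to a power of $q$. That is, $[pt]*\sigma_\lambda=q^{d(\lambda)}\sigma_{\lambda'}$, where $\lambda\mapsto\lambda'$ is an involution-type permutation. Consequently $[pt]^{*m}=q^{c}\mathbf{1}$, where $m$ is the order of the point class. So $\psi_a([pt])=\zeta_a$ is a root of unity times a fixed positive scalar. Moreover the rotation is induced by an automorphism (Seidel element), which permutes the characters, $\psi_{\sigma(a)}=\psi_a\circ\iota$, where $\iota$ is the induced algebra automorphism. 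Applying this symmetry shows that $\lambda_a$ is constant on orbits and lets us reduce to characters with $\psi_a([pt])>0$.

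For those characters I would use the Perron--Frobenius/Rietsch positivity: there is a distinguished point where all Schubert classes evaluate positively. More generally, Buch--Pandharipande show that the $\lambda_a$ are real. I would then argue $\lambda_a=\sum_\lambda \psi_a(\sigma_\lambda)\psi_a(\sigma_{\lambda^\vee})/\psi_a([pt])$, and use the identity $\sigma_{\lambda^\vee}*\sigma_\lambda = [pt]+(\text{q-terms})$ together with the rotation relation $\sigma_{\lambda^\vee}=q^{-d}[pt]*\overline{\sigma_\lambda}$. Here the bar denotes the image under the strange-duality/complex-conjugation symmetry of the real structure, which Buch--Pandharipande use to get reality. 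This would yield $\psi_a(\sigma_{\lambda^\vee})/\psi_a([pt]) = c\,\overline{\psi_a(\sigma_\lambda)}$ with $c>0$, and hence $\lambda_a=c\sum_\lambda|\psi_a(\sigma_\lambda)|^2>0$.

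The main obstacle is this last conjugation identity: proving that dividing by $[pt]$ turns Poincar\'e duality into complex conjugation on each eigenvalue. I would try to establish it first for Grassmannians, where the eigenvalues $\psi_a(\sigma_\lambda)$ are Schur polynomials evaluated at roots of $x^n=(-1)^{k-1}$ and $[pt]$ evaluates to $\prod x_i^{\,n-k}$. There one checks directly that $s_{\lambda^\vee}(x)=\prod x_i^{\,n-k}\,\overline{s_\lambda(x)}$ because $|x_i|=1$. For the remaining types (quadrics, Lagrangian and orthogonal Grassmannians, the Cayley plane, the Freudenthal variety) I would use the uniform Peterson/Lam--Templier description of the spectrum, in which all eigenvalues lie on a circle and the same argument applies; failing that, I would fall back on the explicit presentations case by case.
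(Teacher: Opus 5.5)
Your route differs from the paper's, and as written it has a gap exactly where the content of the theorem lies.

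\textbf{The gap.} Everything hinges on the identity $\psi_a(\sigma_{p(w)})/\psi_a([pt])=c\,\overline{\psi_a(\sigma_w)}$ with $c>0$ for every character $\psi_a$. You prove it only for Grassmannians. There it is correct, since $s_{\mu^\vee}(x)=\prod_i x_i^{n-k}\,\overline{s_\mu(x)}$ when $|x_i|=1$. It then gives a complete proof in that case, with $\lambda_x=\sum_{\mu\in\mathcal{P}_{kn}}|s_\mu(x)|^2$.

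Beyond Grassmannians the plan fails as stated:
\begin{itemize}
\item The spectrum is not on a circle. For $\mathbb{Q}^r$ with $r$ odd and $q=1$, the characters are $H\mapsto 0$ and $H\mapsto h$ with $h^r=4$.
\item No single $c$ exists. At $H\mapsto h$ one gets $\psi(\sigma_{r-i})/\psi([pt])=h^{r-i}/2=2^{(r-4i)/r}\,\overline{h^i}$ for $1\le i\le (r-1)/2$, while the constant is $1$ for $w=\mathbf 1$. So at best you get positive weights $c_w$ that depend on $w$.
\item Strange duality sends $q\mapsto y_0q^{-1}$, so it is not a ring automorphism at $q=1$ unless $y_0=1$.
\item Reality of the $\lambda_a$ (Buch--Pandharipande) does not imply the identity.
\end{itemize}

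\textbf{Smaller errors.}
\begin{itemize}
\item In fact $g([pt],\epsilon_a)=\psi_a([pt])\,\eta_a$, not $\int_X\epsilon_a$ (which equals $\eta_a$). So the correct target is $\eta_a\psi_a([pt])>0$, not $\eta_a^2\psi_a([pt])>0$.
\item Multiplication by $[pt]$ is not a ring automorphism, so the ``rotation'' reduction to $\psi_a([pt])>0$ is unfounded. In ${\rm Gr}(2,4)$ at $q=1$ we have $\Delta=6[pt]+2$. Two of the six characters have $\psi([pt])=-1$ and eigenvalue $4$; the other four have eigenvalue $8$. So no symmetry fixing $\Delta/[pt]$ can carry one class to the other.
\end{itemize}

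\textbf{How to close it.} The missing idea, which makes your approach uniform, is to work at $q_0=y_0^{1/2}$, where $\iota$ descends to an algebra involution with real matrix in the Schubert basis.
\begin{itemize}
\item Put $\phi(x):=\overline{\iota(x)}$, using conjugation of Schubert coordinates; this is an antilinear ring involution. Define $B(x,y):=g(x,[pt]*\phi(y))$.
\item From $[pt]*\sigma_{\iota(v)}=q^{\delta(v)}\sigma_{p(v)}$ (Theorem~\ref{thm mutiply class of pt} and Lemma~\ref{lem: delta for w and iota w are the same}) you get $B(\sigma_u,\sigma_v)=y(v)\,\delta_{uv}$. So $B$ is positive definite Hermitian.
\item The Frobenius property gives $B(z*x,y)=B(x,\phi(z)*y)$.
\item Since $\phi$ permutes primitive idempotents, $0<B(\epsilon_a,\epsilon_a)=B(\epsilon_a,\phi(\epsilon_a)*\epsilon_a)$ forces $\phi(\epsilon_a)=\epsilon_a$. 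Hence $\lambda_a^{-1}=\eta_a\psi_a([pt])=B(\epsilon_a,\epsilon_a)>0$.
\item The rescaling $\sigma_w\mapsto s^{l(w)}\sigma_w$ transports this to $q=1$, because $\Delta/[pt]$ has degree $0$.
\end{itemize}

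\textbf{Comparison with the paper.} The paper never uses characters or idempotents. It writes the matrix of $\Delta/[pt]$ in the basis $f(w)\sigma_w$ as the Gram matrix $\sum_{w,z}L_{w,z}(x)L_{w,z}(y)$, hence positive semidefinite. It then gets strictness from invertibility of $\Delta$ (semisimplicity plus Abrams). Your corrected version instead identifies each eigenvalue directly as $1/B(\epsilon_a,\epsilon_a)$.
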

\noindent
This theorem is a corollary of Proposition~\ref{prop: main result that A is symmetric}.

This paper is organized as follows: In Section \ref{sec: preliminary}, we review definitions and basic properties about complexity of 2D TQFT and quantum cohomology. In Section \ref{sec: examples of Pn and Qn},  we
give explicit description for states with finite complexity and the sets $\mathfrak{S}_\infty$ and $\mathfrak{F}$ for projective spaces and quadrics. In Section \ref{sec: cominuscule}, we estimate the size of $\mathfrak{S}_\infty$
and prove Theorem \ref{thm:eigenvalue>0} for minuscule and cominuscule spaces.
In Section \ref{section: Periodicity of action of Delta}, we estimate the dimension of $\mathfrak{F}$
for compact symplectic manifolds with single quantum parameter.
In Section \ref{sec: quantum cohomology}, we give an explicit formula representing $\Delta$ as
a linear combination of Schubert classes for Grassmannians and  prove the sharpness of the bound in
Theorem \ref{thm: main thm 2}
for ${\rm Gr}(2,n)$. In Section \ref{sec: fano complete intersection}, we study complexity for quantum cohomology of Fano complete intersections and finish the proof of Theorem \ref{thm: main thm 1}.
In the appendix, we will prove a result about matrices with real eigenvalues which can be used to estimate the size of
$\mathfrak{S}_\infty$.

{\it Acknowledgments}: The authors would like to thank Longting Wu for bringing the reference \cite{Ce} to our attention.

\section{Preliminaries}\label{sec: preliminary}

\subsection{Complexity in 2D TQFT}    \label{subsec 2D TQFT}

In this subsection, we review the definition of circuit complexity for 2D TQFT as given in \cite{CFS}.
We refer to the book \cite{K} for definitions and basic properties of Frobenius algebra and 2D TQFT.

In this paper, a Frobenius algebra always means a symmetric Frobenius algebra over $\mathbb{C}$.
There are several equivalent definitions for
Frobenius algebra. For convenience, we will use the following definition (see Sections 2.2.7 and 2.2.9 in \cite{K}):
A {\it Frobenius algebra} is a finite dimensional commutative associative algebra $\mathcal{H}$  with a unit $\mathbf{1} \in \mathcal{H}$ and a symmetric nondegenerate pairing $g: \mathcal{H} \times \mathcal{H} \longrightarrow \mathbb{C}$ such that
\begin{equation} \label{eqn:FrobCond}
 g(x \cdot y, \, z) \,= \, g(x, \, y \cdot z)
\end{equation}
for all $x, y, z \in \mathcal{H}$, where $x \cdot y$ is the product of $x$ and $y$.

Let 2\textbf{Cob} be the monoidal category whose objects are disjoint unions
of circles and morphisms are compact oriented 2-dimensional surfaces with boundaries. Each such surface is an oriented cobordism
between its incoming boundary and outgoing boundary.
Let $\textbf{Vect}_\mathbb{C}$ be the monoidal category whose objects are finite dimensional vector spaces over $\mathbb{C}$
and morphisms are linear maps between vector spaces.
A {\it 2D TQFT} is a symmetric monoidal functor $\Phi$ from 2\textbf{Cob} to $\textbf{Vect}_\mathbb{C}$.
The functor $\Phi$ induces a Frobenius algebra structure on the vector space $\mathcal{H} = \Phi (S^1)$.
Conversely, given any Frobenius algebra, one can construct a 2D TQFT  in a canonical way
(see Theorem 3.3.2 in \cite{K}).

Let $\Sigma_1$ be the genus-1 surface whose boundary consists of one incoming circle and one outgoing circle:

\begin{center}
    \begin{tikzpicture}[scale=0.5]
    \draw(-3,0.7) to[out=25, in= 180] (0,1.7) to[out=0, in=155] (3,0.7);
    \draw(-3,0.7) .. controls (-3.3,0.2) and (-3.3,-0.2) .. (-3,-0.7);
    \draw(-3,0.7) .. controls (-2.7,0.2) and (-2.7,-0.2) .. (-3,-0.7);
    \draw(3,0.7) .. controls (3.3,0.2) and (3.3,-0.2) .. (3,-0.7);
    \draw[dashed](3,0.7) .. controls (2.7,0.2) and (2.7,-0.2) .. (3,-0.7);
    \draw(-3,-0.7) to[out=-25, in= 180] (0,-1.5) to[out=0, in=-155] (3,-0.7);
    \draw(-0.5,0) .. controls (0,0.4) .. (0.5,0);
    \draw(-0.7,0.2) to[out=-25 in= 155] (-0.5,0) to[out=-25, in=180] (0,-0.2) to[out=0, in=25] (0.5,0) to[out=25, in=25] (0.7,0.2);
 \end{tikzpicture}
\end{center}

\noindent
Then $\Phi(\Sigma_1)$ is a linear operator on $\mathcal{H}$ which is called the {\it handle operator}.
More generally, for any compact oriented genus-$h$ surface $\Sigma$ whose boundary consists of one incoming circle and one outgoing circle,
$\Phi(\Sigma)$ defines a linear operator on  $\mathcal{H}$. Topologically $\Sigma$ can always be
obtained by gluing $h$ copies of $\Sigma_1$. Hence $\Phi(\Sigma)$ must be the composition of $h$ copies of the handle
operator, i.e.  $\Phi(\Sigma) = \Phi(\Sigma_1)^h$. This justifies why the handle operator is used as an elementary
gate in the definition of circuit complexity in 2D TQFT.

If we choose a basis $\{ e_1, \ldots, e_n \}$ for the vector space $\mathcal{H}$ and let $g_{ij}=g(e_i, e_j)$, then
the handle operator $\Phi(\Sigma_1)$ is equal to the multiplication by the {\it handle element}
$\Delta_{\mathcal H} \in {\mathcal H}$ which is defined by
\begin{equation} \label{eq:DeltaH}
 \Delta_{\mathcal H} := \sum_{i, j=1}^n g^{ij} e_i \cdot e_j,
\end{equation}
where $g^{ij}$ are entries for the inverse matrix of $(g_{ij})_{n \times n}$ (see Exercise 5 on page 128 in \cite{K}).
Let $\check{e}_i := \sum_{j=1}^n g^{ij} e_j$, then $g(\check{e}_i, \, e_j) = \delta_{ij}$. So $\{\check{e}_i \mid i=1, \ldots, n\}$
is a basis of ${\mathcal H}$ which is dual to the basis $\{e_i \mid i=1, \ldots, n\}$. We also have
\begin{equation} \label{eq:DeltaHdual}
 \Delta_{\mathcal H} := \sum_{i=1}^n  e_i \cdot \check{e}_i .
\end{equation}

According to \cite{CFS}, a (1-party) {\it state} is an element in $\mathbb{P} {\mathcal H}$, which is the projectivization of
${\mathcal H}$. For $x \in {\mathcal H}$, its image in $\mathbb{P} {\mathcal H}$ under the natural projection map is denoted
by $[x]$. Fix a reference state $[x_0]$. Then the (circuit) {\it complexity}
of $[x]$ is defined to be the smallest non-negative integer $k$ such that $[x]=[\Delta_{\mathcal H}^k \cdot x_0]$
where $\Delta_{\mathcal H}^k$ is the $k$-th power of $\Delta_{\mathcal H}$. If such $k$ does not exist, then we define the complexity of $[x]$ to be infinity. Note that the set of states with finite complexity is always countable.
So most states do not have finite complexity. Fix a metric on the projective space $\mathbb{P} {\mathcal H}$.
For any $\epsilon > 0$,  the {\it approximate complexity} of $[x]$ with {\it tolerance} $\epsilon$ is defined
to be the smallest non-negative integer $k$ such that the distance between $[x]$ and $[\Delta_{\mathcal H}^k \cdot x_0]$ is less than or equal to $\epsilon$. Similar to equation \eqref{eqn:SInfty}, we can define $\mathfrak{S}_{\infty}({\mathcal H})$
to be the set of all states which have infinite complexity but have finite approximate complexity with arbitrarily
small tolerance. Then a state lies in $\mathfrak{S}_{\infty}({\mathcal H})$ if and only if it has infinite complexity and
lies in the closure of the set of all states with finite complexity. Hence $\mathfrak{S}_{\infty}({\mathcal H})$ only depends on the topology of $\mathbb{P} {\mathcal H}$ and does not depend on the metric used in defining approximate complexity.
As discussed in Section 4.2 in \cite{CFS}, $\mathfrak{S}_{\infty}({\mathcal H})$ could be very large.  It is possible that
for some  2D TQFT whose corresponding Frobenius algebra is semisimple, $\mathfrak{S}_{\infty}({\mathcal H})$ may contain a torus of positive dimension.

\subsection{Quantum cohomology} \label{subsec:QCoh}

We refer to \cite{RT} and \cite{LiT} for basic definitions of Gromov-Witten invariants and quantum cohomology.
For any compact symplectic manifold $X$ and
$A \in H_{2}(X, {\mathbb Z})$,
the $k$-point degree-$A$ genus-0 {\it Gromov-Witten invariant}
of $\gamma_1, \ldots, \gamma_k \in H^*(X)$
is defined by
\[
 \left<  \gamma_1, \ldots, \gamma_k \right>_{A}
\, := \, 	\int_{\left[\overline{\cal M}_{0, k}(X, A) \right]^{\rm virt}}
	\prod_{i=1}^k {\rm ev}_{i}^{*}(\gamma_{i}),
\]
where $\left[\overline{\cal M}_{0, k}(X, A) \right]^{\rm virt}$
 is the virtual fundamental class on the moduli space of degree $A$ stable maps from genus-$0$ curves
with $k$ marked points to $X$,
and ${\rm ev}_{i}$ is the evaluation
map from the moduli space $\overline{\cal M}_{0, k}(X, A) $
to $X$ defined by evaluating each stable map at the $i$-th marked point.
While the big quantum cohomology depends on all genus-0 invariants, in this paper we mainly consider small
quantum cohomology which only needs 3-point genus-0 invariants.

Choose a basis $\{e_i \mid i=1, \ldots N\}$ of $H^*(X)$.
Entries of the matrix for the {\it Poincar\'e pairing} $g$ is given by
\[ g_{ij} = g(e_i, e_j) = \int_X e_i \cdot e_j \]
where ``$\cdot$'' is the product in the ordinary cohomology ring.
Let $g^{ij}$ be the entries for the inverse matrix of $(g_{ij})_{N \times N}$.
The (small) quantum
product of $\gamma_1, \gamma_2 \in H^*(X)$ is defined by
\begin{equation} \label{eqn:qprod}
 \gamma_1 * \gamma_2 := \sum_{i,j=1}^N \,\, \sum_{A \in H_{2}(X, {\mathbb Z})} q^{A}  \left< \gamma_1, \gamma_2, e_i \right>_{A} g^{ij} e_j,
\end{equation}
where $q^{A}$ belongs to the Novikov ring (i.e. the multiplicative ring
spanned by monomials $q^{A} = q_{1}^{a_{1}} \cdots q_{r}^{a_{r}}$
over the ring of rational numbers, where
$q=(q_{1}, \ldots, q_{r})$ are formal variables corresponding to a fixed basis of $H_{2}(X, {\mathbb Z})$ and
$a_1, \ldots, a_r$ are coefficients of $A$ written as a linear combination of elements in this basis).

If $q=0$, the right hand side of equation \eqref{eqn:qprod} is just $\gamma_1 \cdot \gamma_2$. Hence the quantum product
is a deformation of the product in the ordinary cohomology ring. Let $\mathbf{1} \in H^*(X)$ be the identity in the ordinary cohomology ring. It turns out that the quantum product is associative super-commutative with identity equal to  $\mathbf{1}$. If $X$ does not have nontrivial cohomology classes with odd degree (this is the case for (co)minuscule homogeneous varieties) or if restricted to the space of cohomology classes with even degree, then the quantum product is also commutative.
Moreover
\begin{equation} \label{eqn:CompQPpairing}
g(\gamma_1 * \gamma_2, \, \gamma_3) = g(\gamma_1, \, \gamma_2 * \gamma_3)
\end{equation}
for all $\gamma_1, \gamma_2, \, \gamma_3 \in H^*(X)$.
Hence the quantum product and the Poincar\'e pairing define a Frobenius algebra structure on
\[ QH^*(X) := H^*(X) \bigotimes \mathbb{C}[q]\]
 over the ring $\mathbb{C}[q]$.

The appearance of $q=(q_1, \ldots, q_r)$ in the definition of quantum product is due to a technical reason: We need to avoid the convergence issue for the summation over $A \in H_2(X, \mathbb{Z})$. A basic property of Gromov-Witten invariants is that
if $\left< \gamma_1, \gamma_2, \gamma_3 \right>_A \neq  0 $ and $\gamma_i$ are homogeneous, then we must have
\begin{equation} \label{eqn:Select}
\deg_{\mathbb{R}}(\gamma_1) + \deg_{\mathbb{R}}(\gamma_2) + \deg_{\mathbb{R}}(\gamma_3) = \dim_{\mathbb{R}}(X) + 2 c_1(X)(A),
\end{equation}
where $c_1(X)$ is the first Chern class of $X$.
In this paper, $\dim_{\mathbb{R}}$ and $\deg_{\mathbb{R}}$ stand for real dimension and real degree. We will reserve
the notations $\dim := \frac{1}{2} \dim_{\mathbb{R}}$ and $\deg := \frac{1}{2} \deg_{\mathbb{R}}$ for complex dimension and complex degree which are more convenient to use if $X$ is a K\"ahler manifold. We can assign a (complex) degree to each $q_i$
such that $\deg(x*y)=\deg(x)+\deg(y)$ for all homogeneous $x, y \in H^*(X)$.
In fact $\deg(q_i)$ is just the value of $c_1(X)$ on the basis element of $H_2(X, \mathbb{Z})$ corresponding to $q_i$.
If there is only one quantum parameter $q$, this is the case when $H_2(X, \mathbb{Z}) \cong \mathbb{Z}$, then
we will set $\tau := \deg(q)$ in this paper.

If $X$ is a Fano variety (most examples in this paper satisfy this condition), then there are only finitely many $A$
such that $\left< \gamma_1, \gamma_2, \gamma_3 \right>_A \neq  0 $ for fixed $\gamma_i$. In this case, the right hand side
of equation~\eqref{eqn:qprod} is well defined if we set all formal parameters $q_i=1$. The quantum product and Poincar\'e pairing then define a Frobenius algebra structure on $H^*(X)$ over $\mathbb{C}$, which in turn defines a 2D TQFT.
The handle element for this 2D TQFT is precisely the element $\Delta$ defined by equation \eqref{eqn:Delta}. Hence the complexity defined in \cite{CFS} coincides with the definition given in the introduction.

If $X$ has non-trivial cohomology classes of odd degree (for example, for many Fano complete intersections), then the quantum cohomology of $X$ is only super commutative. In this case, the definition of complexity given in the introduction still makes sense.

\section{Projective spaces and quadrics}\label{sec: examples of Pn and Qn}

Projective spaces $\mathbb{P}^n$ and quadrics $\mathbb{Q}^n$ are special cases of (co)minuscule homogeneous varieties (see Section \ref{sec: cominuscule}).
The small quantum cohomology of these spaces are relatively simple. In this section, we
give an explicit description for ${\mathfrak S}_\infty$, $\mathfrak{F}$, and the set of states with finite complexity for these spaces. We will prove that ${\mathfrak S}_\infty$ is either empty or consists of a single element.
For projective spaces, $\mathfrak{F}$ is equal to the space of cohomology classes.
For quadrics, $\mathfrak{F}$ is a 2-dimensional subspace inside the space of cohomology classes whose dimension could be
arbitrarily large.

\subsection{Projective spaces}
\label{sec:Pn}

The small quantum cohomology ring of $\mathbb{P}^n$ has a very simple description (see, for example, Section 10 of \cite{FuPan}):
\[
QH^*(\mathbb{P}^n)=\mathbb{C}[H,q]/(H^{*(n+1)}-q),
\]
\noindent where $H$ is the hyperplane class and $H^{*i}$ is  the $i$-th quantum power of $H$.
A basis for the vector space $H^*(\mathbb{P}^n)$ is given by $\{\mathbf{1}=H^0, H, ...,H^n\}$,
were $H^i$ is $i$-th power of $H$ with respect to the ordinary cup product.
We have
\[ H^{*(n+1)} = q \mathbf{1} {\rm \,\,\, and \,\,\, } H^{*i}=H^i {\rm \,\,\, for \,\,\,} 0 \leq i \leq n. \]
Since the Poincar\'e pairing on $H^*(X)$ is given by
\[
g(H^i, H^j)=\delta_{i+j,n},
\]
the handle element $\Delta$ is given by
\[
\Delta=\sum_{i=0}^n H^i * H^{n-i}=(n+1)H^n.
\]
Hence
\[
\Delta * H^i=
\begin{cases}
    (n+1)H^n, & i=0, \\
    (n+1)q H^{i-1}, & 1\leq i\leq n,
\end{cases}
\]
and
\[ \Delta^{*k} = (n+1)^k q^{k-1} H^{n+1-k} {\rm \,\,\, for \,\,\,} 1 \leq k \leq n+1. \]

To study complexity of quantum cohomology, we set $q=1$. Since the action of $\Delta$ on $\mathbb{P}H^*(\mathbb{P}^n)$
is periodic with period $n+1$, there are only $n+1$ states with finite complexity and $\mathfrak{S}_\infty$ is an empty set.

If we choose the reference state $S_0=[\mathbf{1}]$, then the set of states with finite complexity is $\{[\mathbf{1}],[H],...,[H^n]\}$ and the complexity of $[H^i]$ is $n+1-i$ for $1 \leq i \leq n$.
Moreover, $\mathfrak{F}=H^*(\mathbb{P}^n)$ where $\mathfrak{F}$ is defined by equation \eqref{eqn:F}.

\subsection{Quadrics $\mathbb{Q}^{r}$}
\label{subsect: even quadrics}

We first recall some basic properties of the quantum cohomology of quadrics given in Section 4.1 of  \cite{CMP2}.
A basis of the vector space $H^*(\mathbb{Q}^{r})$
is given by the set of Schubert classes.
For each $0 \leq i \leq \lfloor (r-1)/2 \rfloor$, there are Schubert classes $\sigma_i$ and $\sigma_{r-i}$
with (complex) degree $i$ and $r-i$ respectively.
If $r=2m$ is even, then there are two more Schubert classes $\sigma_m^+$ and $\sigma_m^-$ of degree $m$.
The identity of the cohomology ring $H^*(\mathbb{Q}^{r})$ is $\mathbf{1}=\sigma_0$ and the class of point is $\sigma_r$.
Let $H$ be the hyperplane class on $\mathbb{Q}^{r}$. Then
\[
H^k=\left\{ \begin{array}{ll} \sigma_k & {\rm \,\,\, if \,\,\,} 0 \leq k \leq \lfloor (r-1)/2 \rfloor, \\
                              \sigma_m^+ + \sigma_m^-  & {\rm \,\,\, if \,\,\,} r=2m {\rm \,\,\, is \,\,\, even \,\,\, and \,\,\,} k=m, \\
                              2\sigma_k & {\rm \,\,\, if \,\,\,} \lceil (r+1)/2 \rceil \leq k \leq r.
            \end{array} \right.
\]
If we replace ordinary cup product by quantum product, these relations are still valid except in the top degree. More precisely, we have
\[  H^{*k} = H^k {\rm \,\,\, for \,\,\,} 0 \leq k < r, {\rm \,\,\, and \,\,\,}
    H^{*r} = 2 \sigma_{r}+2q \mathbf{1}, \,\,\,  \sigma_r * H = qH.
\]
The last two formulas imply that
\begin{equation}  \label{eqn:pt*Hi}
\sigma_r * H^{*i} = q H^{*i}  \hspace{20pt} {\rm for \,\,\, all \,\,\,} i \geq 1
\end{equation}
and (as observed in \cite{BuP})
\begin{equation} \label{eqn:pt*2}
\sigma_r * \sigma_r = q^2 \mathbf{1}.
\end{equation}
If $r=2m$ is even, we also have
\begin{align*}
    & \sigma_{r}=\sigma_m^+*\sigma_m^-, \,\,\, \text{ }\sigma_m^+*\sigma_m^+=\sigma_m^-*\sigma_m^-=q \mathbf{1}.
\end{align*}
Consequently, we have
\begin{equation} \label{eqn:pt*+-}
 \sigma_r * \sigma_m^+ = q \sigma_m^-,  \hspace{20pt} \sigma_r * \sigma_m^- = q \sigma_m^+.
\end{equation}

The following formula for the handle element $\Delta$ of quadrics was computed in Section 3.5 of \cite{BuP}
(where it was called the quantum Euler class):
\begin{equation} \label{eqn:DeltaQ2m}
\Delta= (r+\delta) \sigma_r + (r-\delta) q \mathbf{1},
\end{equation}
where $\delta = 1$ if $r$ is odd and $\delta = 2$ if $r$ is even.
Define
 \[ V_1 := {\rm Span}_{\mathbb{C}[q]} \{\sigma_{i}, \sigma_{r-i} \mid 1 \leq i \leq \lfloor (r-1)/2 \rfloor \}, \hspace{20pt}
    V_2 := {\rm Span}_{\mathbb{C}[q]} \{\mathbf{1}, \sigma_{r} \}.
    \]
If $r=2m$ is even, we also define
\[  V_3 := {\rm Span}_{\mathbb{C}[q]} \{ \sigma_m^+, \sigma_m^- \}. \]
Then $QH^*(\mathbb{Q}^{r}) = V_1 \bigoplus V_2$ for $r$ odd and
$QH^*(\mathbb{Q}^{r}) = V_1 \bigoplus V_2 \bigoplus V_3$ for $r$ even.
\begin{proposition} \label{prop:quadrics}
    The action of $\Delta * $  preserves $V_i$ for each $i$.
    On $V_1$, this action is the scalar multiplication by $2rq$.
    On $V_2$, this action has two  distinct eigenvalues $2rq$ and $-2 \delta q$.
    If $r$ is even, this action on $V_3$ also has two  distinct eigenvalues $2rq$ and $-2 \delta q$.
    Moreover, $\mathfrak{F} \otimes \mathbb{C}[q] = V_2 $ where $\mathfrak{F}$ is defined by equation~\eqref{eqn:F}.
\end{proposition}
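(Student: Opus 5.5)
The plan is a direct computation from the formula \eqref{eqn:DeltaQ2m} $\Delta=(r+\delta)\sigma_r+(r-\delta)q\mathbf{1}$. This reduces the action of $\Delta*$ to the action of $\sigma_r*$ on each $V_i$. Everything needed is already recorded above: \eqref{eqn:pt*Hi}, \eqref{eqn:pt*2} and \eqref{eqn:pt*+-}. After computing the action of $\sigma_r*$ on each $V_i$, I read off the action of $\Delta*$ as $(r+\delta)(\sigma_r*)+(r-\delta)q$.

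\emph{Step 1 ($V_1$).} For $1\le i\le\lfloor (r-1)/2\rfloor$ we have $\sigma_i=H^{*i}$ and $\sigma_{r-i}=\tfrac12 H^{*(r-i)}$, where $r-i\ge 1$ and $r-i<r$. So by \eqref{eqn:pt*Hi}, $\sigma_r*$ acts on $V_1$ as multiplication by $q$. Hence $\Delta*$ acts on $V_1$ as $(r+\delta)q+(r-\delta)q=2rq$.

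\emph{Step 2 ($V_2$).} By \eqref{eqn:pt*2}, $\sigma_r*\mathbf{1}=\sigma_r$ and $\sigma_r*\sigma_r=q^2\mathbf{1}$. So $\sigma_r*$ preserves $V_2$, with eigenvectors $\sigma_r\pm q\mathbf{1}$ for the eigenvalues $\pm q$. Therefore $\Delta*$ has eigenvalue $2rq$ on $\sigma_r+q\mathbf{1}$ and $(r-\delta)q-(r+\delta)q=-2\delta q$ on $\sigma_r-q\mathbf{1}$. These two eigenvalues are distinct since $r,\delta>0$.

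\emph{Step 3 ($V_3$, $r=2m$ even).} By \eqref{eqn:pt*+-}, $\sigma_r*$ swaps $\sigma_m^\pm$ up to the factor $q$. Its eigenvectors are $\sigma_m^+\pm\sigma_m^-$ with eigenvalues $\pm q$, and exactly as in Step 2 this gives the eigenvalues $2rq$ and $-2\delta q$ for $\Delta*$.

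\emph{Step 4 ($\mathfrak{F}$).} Since $\mathbf{1}\in V_2$ and $V_2$ is $\Delta*$-stable, every $\Delta^{*k}$ lies in $V_2$, so $\mathfrak{F}\otimes\mathbb{C}[q]\subseteq V_2$. For the reverse inclusion, note that $\Delta^{*0}=\mathbf{1}$, and $\Delta^{*1}=(r+\delta)\sigma_r+(r-\delta)q\mathbf{1}$ has nonzero $\sigma_r$-coefficient $r+\delta$. Then $\sigma_r=(\Delta-(r-\delta)q\mathbf{1})/(r+\delta)$ lies in the $\mathbb{C}[q]$-span of $\mathbf{1}$ and $\Delta$, and this span therefore equals $V_2$.

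The only step needing care is Step 4. One must interpret $\mathfrak{F}\otimes\mathbb{C}[q]$ as the $\mathbb{C}[q]$-span of the $\Delta^{*k}$ inside $QH^*$. One must also check that the change of basis from $\{\mathbf{1},\sigma_r\}$ to $\{\mathbf{1},\Delta\}$ is invertible over $\mathbb{C}[q]$ rather than merely over $\mathbb{C}(q)$. This holds because the transition matrix is triangular with diagonal entries $1$ and $r+\delta$, which are nonzero constants, so no division by $q$ occurs. The eigenvalue statements in Steps 2 and 3 are likewise meant over $\mathbb{C}[q]$, or equivalently for generic $q\ne0$; setting $q=1$ recovers the numerical statement.
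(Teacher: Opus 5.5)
Your proposal is correct and takes essentially the same route as the paper. Both reduce $\Delta*$ to $\sigma_r*$ via \eqref{eqn:DeltaQ2m}, then use \eqref{eqn:pt*Hi}, \eqref{eqn:pt*2} and \eqref{eqn:pt*+-} to get the scalar action $2rq$ on $V_1$ and the eigenvectors $q\mathbf{1}\pm\sigma_r$ and $\sigma_m^+\pm\sigma_m^-$. Your Step 4 also makes explicit the reverse inclusion $V_2\subseteq\mathfrak{F}\otimes\mathbb{C}[q]$, using $\sigma_r=(\Delta-(r-\delta)q\mathbf{1})/(r+\delta)$, which the paper leaves implicit.
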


\begin{proof}
By equations \eqref{eqn:pt*Hi} and \eqref{eqn:DeltaQ2m},
\begin{equation} \label{eqn:Delta*Hi}
\Delta * H^{*i} = 2rq H^{*i} \hspace{20pt} {\rm for \,\,\, all \,\,\,}  i \geq 1.
\end{equation}
Note that
 $\{ H^{*i}, H^{*(r-i)} \mid 1 \leq i \leq \lfloor (r-1)/2 \rfloor \}$ is also a basis of $V_1$.
This shows that the action by $\Delta *$ on $V_1$ is just the scalar multiplication by $2rq$.
In particular, this action preserves $V_1$.

Equations \eqref{eqn:pt*2} and \eqref{eqn:DeltaQ2m} imply that the action by $\Delta *$
preserves $V_2$. This implies $\Delta^{*k} \in V_2$ for all $k \geq 0$ since $\mathbf{1} \in V_2$.
Consequently $\mathfrak{F} \otimes \mathbb{C}[q] = V_2$.
If $r$ is even, equations \eqref{eqn:pt*+-} and \eqref{eqn:DeltaQ2m} imply that the action by $\Delta *$
preserves $V_3$.

By equation~\eqref{eqn:Delta*Hi}, $q \mathbf{1} + \sigma_r = \frac{1}{2} H^{*r} \in V_2$
and $\sigma_m^+ + \sigma_m^- = H^{*m} \in V_3$ (if $r=2m$ is even) are
eigenvectors of $\Delta *$ with eigenvalue $2rq$.
Using equations \eqref{eqn:pt*2} and \eqref{eqn:pt*+-}, one can check that
$q \mathbf{1} - \sigma_r \in V_2$ and
 $\sigma_m^+ - \sigma_m^-  \in V_3$ (if $r=2m$ is even) are
eigenvectors of $\Delta *$ with eigenvalue $- 2 \delta q$.
The proposition is thus proved.
\end{proof}

We now consider complexity of quantum cohomology and set $q=1$.
By Proposition~\ref{prop:quadrics}, $H^*(X)=E_1 \bigoplus E_2$
where $E_1$ and $E_2$ are the eigenspaces of $\Delta *$ with eigenvalues $2r$ and $-2\delta$ respectively.
Fix a reference state $S_0 \in \mathbb{P}H^*(X)$, we can write
$S_0=[x_1 + x_2]$ with $x_1 \in E_1$ and $x_2 \in E_2$.
For example, if $S_0=[\mathbf{1}]$, we can take $x_1=\mathbf{1}+\sigma_r$ and $x_2=\mathbf{1}- \sigma_r$.
We have
\[ \Delta^{*k} * S_0 = [r^k x_1 +(-\delta)^{k} x_2] \in \mathbb{P}H^*(\mathbb{Q}^{r}) \]
 for all $k \geq 0$.

If $x_1 =0$ or $x_2=0$, then $\Delta^{*k} * S_0 = S_0$ for all $k \geq 0$. So
only $S_0$ has finite complexity and $\mathfrak{S}_\infty$ is an empty set.

Assume $x_1 \neq 0$ and $x_2 \neq 0$.
If $r=\delta$, then for all $k \geq 0$,
\[ \Delta^{*2k} * S_0 = S_0 {\,\,\, \rm and \,\,\, }
\Delta^{*(2k+1)} * S_0=[x_1 - x_2] \in \mathbb{P}H^*(\mathbb{Q}^{r}).
\]
So the set of states with finite complexity is $\{S_0, [x_1-x_2]\}$ and $\mathfrak{S}_\infty$ is an empty set.

If $r > \delta$, then
$\Delta^{*k} * S_0 = [x_1+(-\frac{\delta}{r})^k x_2] \in \mathbb{P}H^*(\mathbb{Q}^{r})$ which converges to $[x_1] $
as $k \rightarrow \infty$. So there are infinitely many states with finite complexity.
The set $\mathfrak{S}_\infty$ consists of only one element $[x_1]$.

\section{(Co)minuscule homogeneous varieties}\label{sec: cominuscule}

In this section, we will study handle elements $\Delta$ for (co)minuscule homogeneous varieties $X$, estimate the size of $\mathfrak{S}_\infty$, and prove the positivity of eigenvalues for quantum multiplication by $\Delta/[pt]$.

\subsection{Quantum cohomology for (co)minuscule homogeneous varieties}

Any (co)minuscule homogeneous variety can be written as $X=G/P$ where $G$ is a semisimple complex algebraic group
and $P$ is a maximal parabolic subgroup of $G$. The set of simple roots of $P$ can be obtained from the set of simple roots
of $G$ by deleting one element $\alpha$. $X$ is  {\it cominuscule} if the coefficient of $\alpha$ in the highest root of $G$ is 1.
Let $\omega$ be the fundamental weight of $G$ corresponding to $\alpha$.
$X$ is {\it minuscule} if $\frac{2 (\omega, \beta)}{(\beta,\beta)} \leq 1$
for all positive root $\beta$ of $G$, where $(\cdot, \cdot)$ is an inner product invariant under the Weyl group $W$ of $G$.
Minuscule and cominuscule homogeneous varieties have been classified. They include
Grassmannians, quadrics, Lagrangian Grassmannians, orthogonal Grassmannians, Cayley plane, and Freudenthal variety (cf. \cite{BL}).
A uniform study of the small quantum cohomology of (co)minuscule homogeneous varieties is given by Chaput, Manivel, and Perrin. Below we recall some basic facts from \cite{CMP2} which will be needed in our calculations.

A basis of vector space $H^*(X)$ is given by Schubert classes $\{ \sigma_w \mid w \in W_X \}$
where $W_X = W/W_P$ and  $W_P$ is the Weyl group of $P$ realized as a subgroup of $W$.
There is an involution $p: W_X \longrightarrow W_X$ (defined using multiplication by the longest element of $W$) such that
the Poincar\'{e} pairing on $H^*(X)$ is given by
\begin{equation} \label{eqn:PDmin}
 \langle \sigma_u,\sigma_v\rangle = g(\sigma_u,\sigma_v) =\delta_{up(v)} {\rm \,\,\, for \,\,\, all \,\,\,} u,v \in W_X,
\end{equation}
where $\delta_{uv}$ equals 1 if $u=v$ and 0 otherwise.

The quantum product of Schubert classes can be written as
\begin{equation}\label{formula: multiply schubert classes}
    \sigma_u*\sigma_v=\sum_{w\in W_X} \sum_{d \geq 0} C_{uv}^{d,w} q^d\sigma_w,
\end{equation}
where $C_{uv}^{d,w} := \langle\sigma_u, \sigma_v, \sigma_{p(w)}\rangle_d$
is the genus 0 three point Gromov-Witten invariants of degree $d$ (see Section \ref{subsec:QCoh}).
In homogeneous space $X$, $C_{uv}^{d,w}$ are always nonnegative integers (see, for example, \cite{FuPan}).
We can assign a (complex) degree $\tau$ to the quantum parameter $q$ such that
the right hand side of equation \eqref{formula: multiply schubert classes} is homogeneous.
In fact, $\tau$ is defined by evaluating the first Chern class of $X$ over the generator of
$H_2(X, \mathbb{Z}) \cong \mathbb{Z}$.
For any $w\in W_X$, let $l(w)$ be the (complex) degree of $\sigma_w$ in $H^*(X)$.
By equation \eqref{eqn:Select}, $C_{uv}^{d,w}$ is non-zero only if
\begin{equation}\label{eq: condition for Cuvdw to be nonzero}
    l(u)+l(v)=l(w)+ d \tau.
\end{equation}
Quantum product $*$ defines ring structures on
\begin{equation}\label{def of QHq}
    QH^*(X) := H^*(X)\otimes\mathbb{C}[q] \hspace{10pt} {\rm and}  \hspace{10pt}
    QH^*(X)_q := H^*(X)\otimes\mathbb{C}[q, q^{-1}].
\end{equation}
Schubert classes $\{\sigma_w \mid w \in W_X \}$ form a basis of
$QH^*(X)_q$ over $\mathbb{C}[q, q^{-1}]$.
A basis of $QH^*(X)_q$ over $\mathbb{C}$ is given by
$\{ q^{d}\sigma_w \mid w\in W_X, d \in \mathbb{Z} \}$.
A crucial property proved by Chaput-Manivel-Perrin is the following {\it strange duality}, which generalizes the same property for
Grassmannians proved by Postnikov in \cite{P}.
\begin{theorem}(Theorem 1.1 in \cite{CMP2}) \label{thm: strange duality}
    Let $X$ be a (co)minuscule homogeneous variety. There exist maps $y: W_X \longrightarrow \mathbb{Q}_{>0}$
    and $\delta: W_X \longrightarrow \mathbb{Z}_{\geq 0}$
    such that the $\mathbb{C}$-linear map
     $\iota: QH^*(X)_q  \longrightarrow QH^*(X)_q$ defined by
     \[
    \iota(q)= y_0 q^{-1}, \hspace{20pt} \iota(\sigma_w)= q^{-\delta(w)}y(w)\sigma_{\iota(w)}
                {\rm \,\,\, \,\,\,  for  \,\,\,} w \in W_X
    \]
is a ring involution on $QH^*(X)_q$, where
$y_0$ is the value of $y$ at the equivalence class of the reflection along the highest root of $G$,
and $\iota(w) \in W_X$ is defined using multiplication by the longest element in $W_P$.
\end{theorem}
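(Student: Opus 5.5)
The statement is quoted from Theorem 1.1 of \cite{CMP2}, so in the paper it is taken as a black box. If I had to prove it, I would reduce it to an identity between Gromov--Witten invariants and then prove that identity geometrically. Since $\iota$ is $\mathbb{C}$-linear with $\iota(q)=y_0q^{-1}$, it is multiplicative exactly when $\iota(\sigma_u*\sigma_v)=\iota(\sigma_u)*\iota(\sigma_v)$ for all $u,v\in W_X$. By equation \eqref{formula: multiply schubert classes}, this becomes the family of identities
\[
 \sum_{d}C_{uv}^{d,w}\,y_0^{d}\,q^{-d-\delta(w)}\,y(w)
 \;=\;y(u)\,y(v)\,q^{-\delta(u)-\delta(v)}\sum_{e}C_{\iota(u)\iota(v)}^{e,\iota(w)}\,q^{e},
\]
one for each $w\in W_X$. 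Comparing powers of $q$, each identity says that the invariant $C_{uv}^{d,w}$ equals, up to the explicit factor $y(u)y(v)y(w)^{-1}y_0^{-d}$, the invariant $C_{\iota(u)\iota(v)}^{e,\iota(w)}$ with
\[
 e=\delta(u)+\delta(v)-\delta(w)-d .
\]

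The first step is to fix the data concretely. I would set $\iota(w)=w_P\,w$ modulo $W_P$, where $w_P$ is the longest element of $W_P$. I would take $\delta(w)$ to be the minimal degree $d$ such that $\sigma_w$ occurs in $\sigma_{\rm pt}*\sigma_{w'}$ with coefficient $q^{d}$, suitably normalized. The condition that $\iota$ preserves degree, i.e. $l(\iota(w))-\delta(w)\tau=-l(w)$ up to the degree shift carried by $q\mapsto q^{-1}$, then pins down $\delta$ uniquely. I would then check that $\iota$ is an involution on $W_X$ and that $\delta$ and $y$ satisfy the cocycle-type relations forced by $\iota^2={\rm id}$, namely $y(w)\,y(\iota(w))=y_0^{\delta(w)}$ together with the matching relation for $\delta$.

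The second step is the core: establish the displayed identity between Gromov--Witten invariants. For (co)minuscule $X$ I would use the quantum-to-classical principle of Buch--Kresch--Tamvakis and Chaput--Manivel--Perrin. It identifies $C_{uv}^{d,w}$ with a classical intersection number on an auxiliary homogeneous variety $Y_d=G/P_d$, namely the variety parametrizing the ``degree-$d$ subvarieties'' swept out by rational curves through a general point. The involution $w\mapsto w_Pw$ should intertwine the descriptions for degrees $d$ and $e$: the Schubert varieties in $Y_d$ and $Y_e$ indexed by $u,v,w$ and by $\iota(u),\iota(v),\iota(w)$ should have the same incidence count. The rational weights $y(w)$ would arise as ratios of degrees of the fibrations $Y_d\leftarrow Z_d\to X$ restricted to Schubert cells.

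This second step is the main obstacle. Before attempting it in general, I would test it on the Chevalley case $u=s_\alpha$, that is, multiplication by the hyperplane class $h$. There the quantum Chevalley formula gives the coefficients explicitly, and the identity reduces to a combinatorial statement on the Hasse diagram of $W_X$. Since $W_X$ is a distributive lattice of order ideals, $\iota$ should act by reversing the order on suitable intervals. If $QH^*(X)_q$ were generated by $h$ over $\mathbb{C}[q,q^{-1}]$, this case would already finish the proof. That fails for Grassmannians, so to cover the remaining products I would fall back on a semisimplicity argument (semisimplicity is proved in \cite{CMP3}). Concretely, I would show that $\iota$ permutes the characters of $QH^*(X)_q$ over the torus $q\mapsto y_0q^{-1}$, checking this on the idempotent basis by means of the Chevalley operator and the strange-duality identities for the point class $\sigma_{\rm pt}$.
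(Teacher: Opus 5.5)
The paper does not prove this statement; it imports it from \cite{CMP2}. Your sketch therefore has to stand on its own, and it does not. Your reduction is correct: since $\iota$ is semilinear for $q\mapsto y_0q^{-1}$, multiplicativity is equivalent to $C_{uv}^{d,w}\,y_0^{d}\,y(w)=y(u)\,y(v)\,C_{\iota(u)\iota(v)}^{e,\iota(w)}$ with $e=\delta(u)+\delta(v)-\delta(w)-d$. But that identity is just a restatement of the theorem, and nothing after it is an argument.

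You never define $y$, although its existence is part of the claim and it is not trivial. For $\mathbb{Q}^3$, apply $\iota$ to $\sigma_1*\sigma_1=2\sigma_2$, using $\sigma_2*\sigma_2=q\sigma_1$. Combined with the involution relation $y(\sigma_1)y(\sigma_2)=y_0$ and $y_0=y(\sigma_2)$ ($\sigma_2$ is the class of $s_\theta$ for $B_2/P_1$), this forces $y(\sigma_1)=1$ and $y(\sigma_2)=y_0=1/2$.

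For the core identity you only say that the quantum-to-classical descriptions ``should'' intertwine. As described, that mechanism cannot work, because $e$ is not a function of $d$. In $\mathrm{Gr}(2,4)$ one has $\iota(\sigma_1)=q^{-1}\sigma_{21}$ and $\iota(\sigma_2)=q^{-1}\sigma_{11}$. So the classical coefficient of $\sigma_2$ in $\sigma_1\cdot\sigma_1$ is matched with the degree-one coefficient of $q\sigma_{11}$ in $\sigma_{21}*\sigma_{21}=q\sigma_2+q\sigma_{11}$. Meanwhile $\mathbf{1}\cdot\mathbf{1}=\mathbf{1}$ stays in degree $0$. Hence no single correspondence between $Y_d$ and a fixed $Y_e$ can transport Schubert varieties by $w\mapsto\iota(w)$.

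Your semisimplicity fallback fails on the same example. Checking $\iota(h*x)=\iota(h)*\iota(x)$ and $\iota([pt]*x)=\iota([pt])*\iota(x)$ constrains $\iota$ only through the joint spectrum of $(h,[pt])$, which need not separate characters. At $q=1$ the characters of $QH^*(\mathrm{Gr}(2,4))$ are indexed by pairs of roots of $x^4=-1$, with $\chi(\sigma_1)=e_1$, $\chi(\sigma_{11})=e_2$ and $\chi([pt])=e_2^2$. Put $\zeta_j=e^{(2j+1)\pi i/4}$. The pairs $\{\zeta_0,\zeta_2\}$ and $\{\zeta_1,\zeta_3\}$ both give $\sigma_1\mapsto 0$ and $[pt]\mapsto -1$, but $\sigma_{11}\mapsto \mp i$. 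Moreover $\iota$ interchanges exactly these two characters, since $\chi_{\{0,2\}}(\sigma_2)=i=\chi_{\{1,3\}}(\sigma_{11})$.

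On the plane spanned by the corresponding idempotents $\varepsilon_{02},\varepsilon_{13}$, both $h*$ and $[pt]*$ act as scalars. Take any invertible $T$ of that plane fixing $\varepsilon_{02}+\varepsilon_{13}$, extended by the identity on the other idempotents. Then $\iota\circ T$ fixes $\mathbf{1}$ and passes both of your checks, yet it is not multiplicative unless $T$ permutes $\varepsilon_{02},\varepsilon_{13}$.

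The missing ingredient is one of two things. Either a genuine uniform proof of the Gromov--Witten identity together with a construction of $y$. Or compatibility of $\iota$ with a full set of ring generators: for Grassmannians all $\sigma_{1^j}$ via quantum Pieri, for even quadrics also $\sigma_m^{\pm}$, and additional generators in the remaining types. The proposal supplies neither.
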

In fact the map $w \mapsto \iota(w)$ also defines an involution on $W_X$.
In this paper, we will not need the precise definition of $y(w)$ and $\delta(w)$ which can be found in \cite{CMP2}.
We also do not need the precise definition of the maps $p, \iota: W_X \longrightarrow W_X$ as long as we know that they are involutions.

We will need the following property
\begin{lemma}\label{lem: delta for w and iota w are the same}
    $\delta(w)=\delta(\iota(w))$ and $y(w)y(\iota(w))=y_0^{\delta(w)}$ for all $w\in W_X$.
\end{lemma}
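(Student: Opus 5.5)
The plan is to use only that $\iota$ is a ring involution with $\iota(\iota(w))=w$, together with the facts that $y$ takes values in $\mathbb{Q}_{>0}$ and $\delta$ in $\mathbb{Z}_{\geq 0}$. Apply $\iota$ twice to a Schubert class $\sigma_w$. Since $\iota$ is $\mathbb{C}$-linear and multiplicative, and $\iota(q)=y_0q^{-1}$, we get $\iota(q^{-1})=y_0^{-1}q$. Hence
\[
\iota(\iota(\sigma_w)) = \iota\bigl(q^{-\delta(w)}y(w)\sigma_{\iota(w)}\bigr) = y_0^{-\delta(w)}q^{\delta(w)}\,y(w)\,q^{-\delta(\iota(w))}y(\iota(w))\,\sigma_{w}.
\]
Because $\iota$ is an involution, this must equal $\sigma_w$. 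The classes $\{q^d\sigma_w\}$ form a $\mathbb{C}$-basis of $QH^*(X)_q$, so we can compare coefficients.

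First, the power of $q$ must vanish, which gives $\delta(w)=\delta(\iota(w))$. Second, the scalar must equal $1$, which gives $y(w)y(\iota(w))=y_0^{\delta(w)}$.

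The one point to check is that ``ring involution'' means $\iota\circ\iota=\mathrm{id}$ on all of $QH^*(X)_q$, including on $q$. This holds: $\iota(\iota(q))=\iota(y_0q^{-1})=y_0\cdot y_0^{-1}q=q$. Multiplicativity is used to compute $\iota(q^{-\delta})=(y_0^{-1}q)^{\delta}$.

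If instead one only knew that $\iota$ is an involution on $W_X$ and a ring automorphism, an alternative route would be to compare degrees. Homogeneity of $\iota$ forces $\delta(w)$ to be determined by $l(w)$ and $l(\iota(w))$, but that relation alone does not give symmetry. So I will rely on the involutivity of the map on $QH^*(X)_q$ as stated in Theorem \ref{thm: strange duality}. No real obstacle is expected beyond this bookkeeping.
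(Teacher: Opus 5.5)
Your proposal is correct and is essentially the paper's argument: apply $\iota$ twice to $\sigma_w$, use $\iota(q^{-1})=y_0^{-1}q$ and $\iota(\iota(w))=w$, then compare the power of $q$ and the scalar coefficient of $\sigma_w$. The closing paragraph about a degree-counting alternative is unnecessary and can be dropped.
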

\begin{proof}
Since $\iota$ is an involution, we have
\begin{align*}
       \sigma_w =  \iota(\iota(\sigma_w))&=\iota(q^{-\delta(w)}y(w)\sigma_{\iota(w)})
        = y(w) y_0^{-\delta(w)} q^{\delta(w)-\delta(\iota(w))}y(\iota(w)) \sigma_w.
\end{align*}

\noindent The coefficients of $\sigma_w$ on both sides of this equation must be the same. This implies the
desired equalities.
\end{proof}

Let $[pt]$ be the class of a point, which is equal to the Schubert class associated to
the equivalence class of the longest element of $W$. The following result was also proved by Chaput-Manivel-Perrin:
\begin{theorem}( Theorem 3.3 in \cite{CMP2}) \label{thm mutiply class of pt}
    $[pt]*\sigma_w=q^{\delta(w)}\sigma_{p\iota(w)}$ for all $w\in W_X$.
\end{theorem}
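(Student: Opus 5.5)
The plan is to compute $[pt]*\sigma_w$ coefficient by coefficient from the Gromov--Witten invariants, and then compare the answer with the data $(\delta,\iota)$ of Theorem~\ref{thm: strange duality}. By equations~\eqref{eqn:PDmin} and \eqref{formula: multiply schubert classes}, the coefficient of $q^d\sigma_v$ in $[pt]*\sigma_w$ is the invariant $\langle [pt],\sigma_w,\sigma_{p(v)}\rangle_d$. By \eqref{eq: condition for Cuvdw to be nonzero}, this invariant can be nonzero only when $\dim X + l(w) = l(v) + d\tau$. So it suffices to prove two things: for each $w$ there is exactly one pair $(d,v)$ with a nonzero invariant, and the value of that invariant is $1$. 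Afterwards we identify this pair with $(\delta(w),p\iota(w))$.

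The first step is geometric. Fix a general point $x\in X$. Let $\Gamma_d(x)$ be the union of all rational curves of degree $d$ through $x$. For (co)minuscule $X$ this is a Schubert variety $X_d$ of a maximal parabolic type. I would then apply the quantum-to-classical principle (in the form of Buch--Kresch--Tamvakis, as extended by Chaput--Manivel--Perrin): a degree-$d$ invariant $\langle [pt],\alpha,\beta\rangle_d$ equals a classical intersection number on an auxiliary homogeneous space $Y_d$ parametrizing the ``degree-$d$ lines''. Via the incidence variety $Z_d\subset X\times Y_d$, the point condition cuts $Y_d$ down to a single point when the curve of minimal degree through two general points is unique. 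The invariant then becomes $\int_{X_d}$ of $\alpha\cdot\beta$ restricted to $X_d$.

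The resulting condition is that the Schubert varieties $X^w$ and $X^{p(v)}$, in general position, meet $X_d$ transversally in a single point. This is a Kronecker-delta condition in the Bruhat order restricted to $X_d$. For fixed $w$, let $d$ be minimal such that a general translate of $X^w$ meets $X_d$. At this $d$ the intersection is a single Schubert cell, which determines a unique $v$ with invariant $1$. For larger $d$, the dimension count combined with the uniqueness of curves shows the invariant vanishes, because the corresponding family of curves has positive-dimensional fibers. This gives $[pt]*\sigma_w = q^{d(w)}\sigma_{v(w)}$ for some functions $d(w)$ and $v(w)$.

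The last step, which I expect to be the main obstacle, is to match $d(w)=\delta(w)$ and $v(w)=p\iota(w)$. Here I would use Theorem~\ref{thm: strange duality} algebraically. Since $\iota$ is a ring involution with $\iota(\mathbf 1)=\mathbf 1$, and multiplication by $[pt]$ sends Schubert classes to monomial multiples of Schubert classes, the class $[pt]$ is invertible. One then checks that $\iota([pt])$ is again a monomial times $[pt]$. Applying $\iota$ to $[pt]*\mathbf 1$ and to $[pt]*\sigma_{\iota(w)}$ should force the exponent $d(w)$ to agree with $\delta(w)$, up to comparing degrees via $\tau$. It should likewise force the permutation $w\mapsto v(w)$ to equal $p\circ\iota$, using $\langle\sigma_u,\sigma_v\rangle=\delta_{up(v)}$. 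If this comparison becomes too tangled, I would fall back on checking the identification type by type against the explicit combinatorial definitions of $\delta$ and $\iota$ in \cite{CMP2}, which are given in terms of $W_X$ and the highest root.
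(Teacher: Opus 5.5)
The paper gives no proof of this statement; it quotes it from Chaput--Manivel--Perrin. Your argument therefore has to stand on its own, and as written it has genuine gaps in both halves.

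\textbf{The geometric half.} The reduction is misstated in several places.
\begin{itemize}
\item A single point condition does not cut $Y_d$ down to a point. It cuts it down to the family $Y_d(x)$ of degree-$d$ ``lines'' through $x$, which is positive-dimensional. For $\mathbb{P}^n$ and $d=1$ it is the $\mathbb{P}^{n-1}$ of lines through $x$.
\item The invariant is not $\int_{X_d(x)}\alpha\cdot\beta$. In $\mathbb{P}^n$ one has $\langle [pt],H,[pt]\rangle_1=1$; this is the coefficient of $q\mathbf 1$ in $[pt]*H=q\mathbf 1$. But $X_1(x)=\mathbb{P}^n$, and a general hyperplane and a general point do not meet at all. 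So your ``transversal single point inside $X_d$'' criterion gives the wrong answer here.
\item What quantum-to-classical actually yields is an integral over $Y_d(x)$ of the restricted push--pull classes of $\sigma_w$ and $\sigma_{p(v)}$. To get a Kronecker delta you must show two things: $Y_d(x)$ is homogeneous under the stabilizer of $x$, and these restrictions are single Schubert classes (or zero) there.
\item The uniqueness of the degree is only asserted. A general translate of $X^w$ meets the Schubert variety $X_d(x)$ in a Richardson variety of dimension $\dim X_d(x)-l(w)$, not in a single cell. ``Positive-dimensional fibers'' for larger $d$ is not an argument, yet this uniqueness is exactly the nontrivial content of the theorem.
\end{itemize}

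\textbf{The identification step.} This fails as proposed.
\begin{itemize}
\item Your key claim, that $\iota([pt])$ is a monomial multiple of $[pt]$, is false. For $\mathbb{P}^n$ with $n\ge 2$, the statement itself gives $[pt]*H^i=qH^{i-1}$ for $i\ge 1$. Hence $p\iota(i)=i-1$ and $\iota(i)=n+1-i$, so $\iota([pt])$ is a monomial times $H$.
\item More fundamentally, with $\iota,\delta,y$ treated as black boxes, comparisons of the kind you describe only produce symmetry constraints. Apply $\iota$ to $\sigma_w*\sigma_{\iota(w)}$ and read off the coefficient of $\mathbf 1$. With $I_d=\langle [pt],\sigma_w,\sigma_{\iota(w)}\rangle_d$, this gives only $I_d\,y_0^{d}=y_0^{\delta(w)}I_{2\delta(w)-d}$. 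That relation is consistent with any value of $I_{\delta(w)}$, including $0$.
\item Invertibility of $[pt]$ does not follow from $\iota(\mathbf 1)=\mathbf 1$. It needs injectivity of $w\mapsto v(w)$.
\end{itemize}
So matching $(d(w),v(w))$ with $(\delta(w),p\iota(w))$ has to use the explicit definitions of $\delta$ and $\iota$ (via the longest element of $W_P$) in Chaput--Manivel--Perrin. What you call a fallback is in fact the heart of the proof, and it is not carried out.
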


Let $\theta$ be the smallest positive integer (always exists since $p$ and $\iota$ are permutations of the finite set $W_X$) such that
\begin{equation} \label{eqn:theta}
(p\iota)^{\theta}=id.
\end{equation}
The identity $\mathbf{1}$ of cohomology ring $H^*(X)$ is the Schubert class associated to the
equivalence class of the unit element in $W$ (see Section 2.6 of \cite{BL}).
By Theorem \ref{thm mutiply class of pt},  we have
\begin{equation}\label{eqn: certain power of [pt]}
    [pt]^{*\theta} = [pt]^{*\theta}* \mathbf{1} = q^{n(X)} \mathbf{1},
\end{equation}
for some integer $n(X)$.  By counting degrees on both sides of the above equation, we have
$ \theta \dim(X) = n(X) \tau $,
where $\dim(X)=\deg ([pt])$ is the (complex) dimension of $X$ and $\tau = \deg(q)$.
Hence we have
\begin{equation} \label{eqn:n(X)}
n(X) =\frac{\theta}{\tau} \, \dim(X) \in \mathbb{Z}.
\end{equation}

\subsection{Action of handle operator}\label{subsec: action of handle operator}

Since the Poincar\'{e} pairing on (co)minuscule homogeneous variety $X$ is given by equation \eqref{eqn:PDmin},
by equation \eqref{eq:DeltaHdual},
the handle element $\Delta$ for $X$ is given by
\begin{equation}\label{eqn: form of Delta}
    \Delta=\sum_{w\in W_X} \sigma_w * \sigma_{p(w)}.
\end{equation}
By Theorem \ref{thm mutiply class of pt}, the quantum multiplication of $[pt]$ is invertible in $QH^*(X)_q$.
Let $[pt]^{-1} \in QH^*(X)_q$ be the inverse of $[pt]$ and $\Delta/[pt] := \Delta * [pt]^{-1}$.
Using strange duality, Buch and Pandharipande proved that there exists a strange inner product
on $H^*(X, \mathbb{Q}) \otimes \mathbb{R}[q, q^{-1}]$ such that the quantum multiplication by $\Delta/[pt]$ is symmetric
with respect to the strange inner product. In particular, after setting $q=1$, the quantum multiplication by $\Delta/[pt]$ is diagonalizable over $\mathbb{R}$. The definition of the strange inner product involves
both quantum multiplication and the involution $\iota$ (see Section 4 in \cite{BuP}).
In this subsection, we will prove that all eigenvalues of $\Delta/[pt]$ are positive real numbers and thus give
a proof of Theorem~\ref{thm:eigenvalue>0}.
This positivity result will be needed in the proofs of Theorems \ref{thm: main thm for finiteness} and
 \ref{thm: case of Gr(2,n)}.

Recall $\tau=\deg(q)$.
We will construct an explicit  basis of
\[ QH^*(X)_{q, \tau} := H^*(X) \otimes \mathbb{C}[q^{1/\tau}, q^{-1/\tau}] \]
such that under this basis, the matrix $A$
of the quantum multiplication by $\Delta/[pt]$ is a real symmetric matrix with non-negative entries.
Our construction does not rely on results in \cite{BuP}.
We will also give an explicit formula for $A$ and prove that it is positive definite.
This will imply the positivity of all eigenvalues of quantum multiplication by $\Delta/[pt]$.

For any $w\in W_X$, define
\begin{equation} \label{eqn:f(w)}
\tilde{\sigma}_w:=f(w) \sigma_w, \hspace{15pt} {\rm where \,\,\, } \hspace{10pt}
f(w):=y_0^{l(w)/(2\tau)} y(w)^{-1/2} q^{-l(w)/\tau}.
\end{equation}
Then $\{\tilde{\sigma}_w\vert \text{ }w\in W_X\}$ is a basis of $QH^*(X)_{q,\tau}$
over $\mathbb{C}[q^{1/\tau}, q^{-1\tau}]$.
Let
\[ A=(A_{vu})_{u,v \in W_X} \]
 be the matrix of quantum multiplication by $\Delta/[pt]$ with respect to this basis.
Then for all $u \in W_X$,
\begin{equation}\label{eq: definition of matrix A}
    \Delta * \tilde{\sigma}_u = \sum_{v\in W_X} A_{vu} \,\, [pt] * \tilde{\sigma}_v.
\end{equation}
\begin{lemma} \label{lem:matrixA}
For all $u,v \in W_X$,
\begin{align}
A_{vu}  &= \sum_{\substack{w,z\in W_X\\d,d'\geq0}}
                    C_{uw}^{d',z}C_{vw}^{d,z}\frac{y(z)y_0^{\frac{d+d'}{2}}}{(y(u)y(v))^{1/2}y(w)} \,\,\,
                    \in \mathbb{R}_{\geq 0},  \label{eq:exppression for A_uv}
    \end{align}
where $C_{vw}^{d,z}$ are Gromov-Witten invariants defined in equation \eqref{formula: multiply schubert classes}
and $y(w)$ is defined in Theorem \ref{thm:  strange duality}.
In particular, $A$ is a symmetric matrix.
\end{lemma}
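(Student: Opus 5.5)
The plan is to extract $A_{vu}$ by pairing both sides of \eqref{eq: definition of matrix A} against a suitable Schubert class. Then I expand $\Delta$ through \eqref{eqn: form of Delta}, and use strange duality (Theorem~\ref{thm: strange duality}) together with Theorem~\ref{thm mutiply class of pt} to rewrite every structure constant as a $C^{d,z}_{\cdot\cdot}$ with lower indices $u,w$ or $v,w$.

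\textbf{Extracting the coefficient.} By Theorem~\ref{thm mutiply class of pt}, $[pt]*\sigma_v=q^{\delta(v)}\sigma_{p\iota(v)}$. Since $p\iota$ is a bijection of $W_X$, the elements $[pt]*\tilde\sigma_v$ are, up to the invertible scalars $f(v)q^{\delta(v)}$, just a permutation of the Schubert basis. By \eqref{eqn:PDmin}, the coefficient of $\sigma_{p\iota(v)}$ in any $x$ equals $g(x,\sigma_{\iota(v)})$. Hence
\[
A_{vu}=\frac{f(u)}{f(v)\,q^{\delta(v)}}\; g(\Delta*\sigma_u,\sigma_{\iota(v)}).
\]

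\textbf{Expanding the pairing.} I write $\Delta=\sum_w \sigma_w*\sigma_{p(w)}$ and use the Frobenius property \eqref{eqn:CompQPpairing} to get
\[
g(\Delta*\sigma_u,\sigma_{\iota(v)})=\sum_w g\big(\sigma_u*\sigma_w,\ \sigma_{\iota(v)}*\sigma_{p(w)}\big).
\]
I expand $\sigma_u*\sigma_w=\sum_{z,d'}C^{d',z}_{uw}q^{d'}\sigma_z$. Pairing with $\sigma_z$ picks out the coefficient of $\sigma_{p(z)}$ in $\sigma_{\iota(v)}*\sigma_{p(w)}$. To compute that coefficient, I use Theorem~\ref{thm mutiply class of pt} together with $p\iota\iota=p$ to write
\[
\sigma_{p(w)}=q^{-\delta(w)}[pt]*\sigma_{\iota(w)},
\qquad\text{so}\qquad
\sigma_{\iota(v)}*\sigma_{p(w)}=q^{-\delta(w)}[pt]*\sigma_{\iota(v)}*\sigma_{\iota(w)}.
\]
Applying the ring involution $\iota$ to $\sigma_v*\sigma_w=\sum C^{d,z}_{vw}q^d\sigma_z$ gives
\[
\sigma_{\iota(v)}*\sigma_{\iota(w)}=\frac{q^{\delta(v)+\delta(w)}}{y(v)y(w)}\sum_{z,d}C^{d,z}_{vw}\,y_0^d\,q^{-d-\delta(z)}\,y(z)\,\sigma_{\iota(z)}.
\]
One more use of Theorem~\ref{thm mutiply class of pt}, namely $[pt]*\sigma_{\iota(z)}=q^{\delta(z)}\sigma_{p(z)}$, turns this into an expansion in the classes $\sigma_{p(z)}$. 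Its coefficient of $\sigma_{p(z)}$ is
\[
\frac{q^{\delta(v)}}{y(v)y(w)}\sum_d C^{d,z}_{vw}\,y_0^d\,q^{-d}\,y(z),
\]
after using Lemma~\ref{lem: delta for w and iota w are the same} to match $\delta$'s. Collecting terms,
\[
A_{vu}=\frac{f(u)}{f(v)}\sum_{w,z,d,d'}C^{d',z}_{uw}C^{d,z}_{vw}\,\frac{y(z)\,y_0^{d}}{y(v)y(w)}\,q^{d'-d}.
\]

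\textbf{Normalization and conclusion.} I now insert $f$ from \eqref{eqn:f(w)}. The degree constraint \eqref{eq: condition for Cuvdw to be nonzero} gives
\[
l(u)+l(w)=l(z)+d'\tau,\qquad l(v)+l(w)=l(z)+d\tau,
\]
so $l(u)-l(v)=(d'-d)\tau$ on every nonzero term. Then $q^{-(l(u)-l(v))/\tau}$ cancels $q^{d'-d}$. The $y_0$ powers combine as $y_0^{(l(u)-l(v))/(2\tau)}y_0^d=y_0^{(d+d')/2}$. The factor $y(u)^{-1/2}y(v)^{1/2}/y(v)$ equals $(y(u)y(v))^{-1/2}$. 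This yields \eqref{eq:exppression for A_uv}. Nonnegativity is immediate since $C\ge0$ and $y>0$, and symmetry follows because the formula is invariant under swapping $(u,d')\leftrightarrow(v,d)$.

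The main obstacle I expect is the bookkeeping of indices under $p$ and $\iota$ in the middle step. One must verify that the composite $[pt]\circ\iota$ lands exactly on $\sigma_{p(z)}$, so that the $\delta$-exponents cancel via Lemma~\ref{lem: delta for w and iota w are the same}, and that the relevant identities are $p\iota\iota=p$ together with the fact that $p$ and $\iota$ are involutions. If a mismatch appears there, I would instead reindex the dual-basis sum as $\Delta=\sum_w\sigma_{\iota(w)}*\sigma_{p\iota(w)}$, which is allowed by \eqref{eq:DeltaHdual} for any basis.
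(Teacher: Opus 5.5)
Your proposal is correct and follows essentially the same route as the paper. You extract $A_{vu}$ by a pairing, expand $\Delta$ via \eqref{eqn: form of Delta} and the Frobenius property, apply the ring involution $\iota$ to the structure constants together with Theorem~\ref{thm mutiply class of pt}, and cancel powers of $q$ using the degree constraint. The only difference is cosmetic: you pair directly against $\sigma_{\iota(v)}$, whereas the paper pairs against $[pt]^{*(\theta-1)}*\sigma_{p(v)}$, which by \eqref{eqn: form of sigma_iota(w)} equals $q^{n(X)-\delta(v)}\sigma_{\iota(v)}$, so your version simply bypasses $\theta$ and $n(X)$.
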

\begin{proof}
By equations \eqref{eqn:CompQPpairing} and \eqref{eqn: certain power of [pt]}, we have for all $w, v \in W_X$,
\begin{equation}
\langle [pt] * \sigma_w, \, \, [pt]^{*(\theta-1)} * \sigma_{p(v)}\rangle
= \langle \sigma_w, \,\, [pt]^{*\theta} * \sigma_{p(v)}\rangle
= q^{n(X)} \delta_{wv}.
\end{equation}
Extend the Poincar\'{e} pairing $\langle \cdot, \cdot \rangle$ to
$QH^*(X)_{q, \tau}$ by linearity over $\mathbb{C}[q^{1/\tau}, q^{-1/\tau}]$.
By equation \eqref{eq: definition of matrix A},
 \begin{align*}
        \langle \Delta*\tilde{\sigma}_u,\,\, [pt]^{*(\theta-1)}*\sigma_{p(v)}\rangle
        &=\sum_{w\in W_X}A_{wu}\langle [pt]*\tilde{\sigma}_w, \,\, [pt]^{*(\theta-1)}*\sigma_{p(v)}\rangle\\
        &=\sum_{w\in W_X}q^{n(X)}f(w)A_{wu}\delta_{wv}\\
        &=q^{n(X)}f(v)A_{vu}.
    \end{align*}
Hence $A_{vu}=q^{-n(X)}f(v)^{-1}\langle \Delta*\tilde{\sigma}_u, \,\, [pt]^{*(\theta-1)}*\sigma_{p(v)}\rangle$.
Plugging in the formula for $\Delta$ in equation \eqref{eqn: form of Delta}, we have
    \begin{align}
        A_{vu}&=q^{-n(X)}f(v)^{-1}\sum_{w\in W_X}
            \langle \sigma_w*\sigma_{p(w)}*\tilde{\sigma}_u, \,\, [pt]^{*(\theta-1)}*\sigma_{p(v)}\rangle\nonumber\\
        &=q^{-n(X)}f(u)f(v)^{-1}\sum_{w\in W_X}
            \langle \sigma_u*\sigma_w, \,\, [pt]^{*(\theta-1)}*\sigma_{p(v)}*\sigma_{p(w)}\rangle,
           \label{eqn:App}
    \end{align}
where the second equality follows from equation \eqref{eqn:CompQPpairing}.
To simplify this formula, we need to compute $\sigma_{p(v)}*\sigma_{p(w)}$.

We first find a formula relating  $\sigma_{p(w)}$ to $\sigma_{\iota(w)}$.
Since $\iota$ is an involution, by Theorem \ref{thm mutiply class of pt}
and Lemma \ref{lem: delta for w and iota w are the same}, we have
\[
[pt] *\sigma_{\iota(w)}=q^{\delta(w)} \sigma_{p(w)}.
\]
Multiplying both sides of this equation by $[pt]^{*(\theta-1)}$ and using equation \eqref{eqn: certain power of [pt]},
we have
\begin{equation}\label{eqn: form of sigma_iota(w)}
    \sigma_{\iota(w)}=q^{\delta(w)-n(X)}[pt]^{*(\theta-1)}*\sigma_{p(w)}.
\end{equation}

Applying the ring involution $\iota$ to both sides of
equation \eqref{formula: multiply schubert classes}, we have
\[
q^{-\delta(u)}y(u)\sigma_{\iota(u)}*q^{-\delta(v)}y(v)\sigma_{\iota(v)}=\sum_{\substack{w\in W_X\\d\geq0}}C_{uv}^{d,w}y_0^d q^{-d-\delta(w)}y(w)\sigma_{\iota(w)}.
\]
\noindent Using Equation \eqref{eqn: form of sigma_iota(w)}, one gets

\begin{align*}
    &q^{-n(X)}y(u)[pt]^{*(\theta-1)}*\sigma_{p(u)}*q^{-n(X)}y(v)[pt]^{*(\theta-1)}*\sigma_{p(v)}\\=&\sum_{\substack{w\in W_X\\d\geq0}}C_{uv}^{d,w}y_0^dq^{-d-n(X)}y(w)[pt]^{*(\theta-1)}*\sigma_{p(w)}.
\end{align*}

\noindent Since $[pt]$ is invertible in $QH^*(X)_q$, we can get rid of one copy of $[pt]^{*(\theta-1)}$
from both sides of this equation and obtain
\begin{equation}\label{eq: multiply poincare dual of schubert class}
    [pt]^{*(\theta-1)}*\sigma_{p(u)}*\sigma_{p(v)}=\sum_{\substack{w\in W_X\\d\geq0}}q^{n(X)-d}C_{uv}^{d,w}\frac{y(w)y_0^d}{y(u)y(v)}\sigma_{p(w)}.
\end{equation}
Plugging this formula into the right hand side of equation \eqref{eqn:App}, we obtain
   \begin{align}
        A_{vu} &=f(u)f(v)^{-1}\sum_{w\in W_X}\sum_{\substack{z\in W_X\\d\geq0}}q^{-d}C_{vw}^{d,z}\frac{y(z)y_0^d}{y(v)y(w)}\langle\sigma_u*\sigma_w,\sigma_{p(z)}\rangle\nonumber\\
        &=\sum_{w\in W_X}\sum_{\substack{z\in W_X\\d\geq0}}
          \sum_{\substack{z'\in W_X\\d'\geq0}}
                q^{d'-d+\frac{l(v)-l(u)}{\tau}}C_{uw}^{d',z'}C_{vw}^{d,z}
                \frac{y(z)y_0^{d+\frac{l(u)-l(v)}{2\tau}}}{(y(u)y(v))^{1/2}y(w)} \delta_{z'z},
                  \label{eqn:Alu-lv}
    \end{align}
where the second equality used the definition for $f(w)$ and
equation   \eqref{formula: multiply schubert classes}.

By equation \eqref{eq: condition for Cuvdw to be nonzero},
    $C_{uw}^{d',z} C_{vw}^{d,z}\neq 0$ only if
    \[ \tau d'+l(z)=l(u)+l(w), \hspace{20pt} \tau d+l(z)=l(v)+l(w).\]
     Subtracting these two equations, we obtain
    $l(u)-l(v)=(d'-d)\tau$. Plugging this formula into the right hand side of equation \eqref{eqn:Alu-lv},
    we obtain the formula for $A_{vu}$ in equation \eqref{eq:exppression for A_uv},
    which is clearly symmetric with respect to $u$ and $v$.
    Since the value of $y$ are positive rational numbers and $C_{vw}^{d,z}$ are non-negative integers,
    $A_{vu}$ is a non-negative real number. The lemma is thus proved.
\end{proof}

\begin{proposition}\label{prop: main result that A is symmetric}
    Matrix $A$ is positive definite.
\end{proposition}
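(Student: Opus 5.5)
The plan is to write $A$ as a Gram-type matrix $A = \sum_w B_w B_w^{T}$ with real matrices $B_w$, which makes it positive semidefinite, and then to obtain strict positivity from one particular term.

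\textbf{Gram decomposition.} For each fixed $w\in W_X$ define a matrix $B_w=(B_w)_{z,u}$, with rows indexed by pairs $(z,d')$ (or simply by $z$, summing over $d'$), by
\[
(B_w)_{z,u} := \sum_{d'\ge 0} C_{uw}^{d',z}\, \frac{y(z)^{1/2} y_0^{d'/2}}{y(u)^{1/2}\,y(w)^{1/2}} .
\]
Formula \eqref{eq:exppression for A_uv} in Lemma \ref{lem:matrixA} then factors as
\[
A_{vu}=\sum_{w}\sum_{z} (B_w)_{z,v}(B_w)_{z,u}.
\]
This factorization is legitimate because the sums over $d$ and $d'$ separate: the summand is a product of a factor depending on $(u,w,z,d')$ and a factor depending on $(v,w,z,d)$. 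Only $y(z)$ has to be split as $y(z)^{1/2}y(z)^{1/2}$, and $1/y(w)$ as $y(w)^{-1/2}y(w)^{-1/2}$. All $y$-values are positive, so the square roots are real. Therefore $A=\sum_w B_w^{T}B_w$, and for every real vector $x$,
\[
x^TAx=\sum_w \|B_w x\|^2\ge 0,
\]
so $A$ is positive semidefinite.

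\textbf{Strict positivity.} It suffices that a single $B_w$ is injective. Take $w=\mathrm{id}$, so that $\sigma_w=\mathbf{1}$. Since $\mathbf{1}$ is the unit, $\sigma_u*\mathbf{1}=\sigma_u$, which gives $C_{u,\mathrm{id}}^{d',z}=\delta_{d',0}\delta_{uz}$. Hence $B_{\mathrm{id}}$ is diagonal with entries
\[
y(u)^{1/2}y(u)^{-1/2}y(\mathrm{id})^{-1/2}=y(\mathrm{id})^{-1/2}>0 .
\]
So $B_{\mathrm{id}}$ is invertible and $x^TAx\ge \|B_{\mathrm{id}}x\|^2>0$ for $x\neq 0$. (If one prefers, $y(\mathrm{id})=1$ follows from $\iota(\mathbf{1})$ being a unit, but positivity alone is enough.)

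\textbf{Expected obstacle.} The main care point is that in Lemma \ref{lem:matrixA} the $q$-powers have already been eliminated, so $A$ is a genuine real matrix with constant entries (after $q$ is normalized via the $\tilde\sigma$ basis). The factorization must use precisely the form \eqref{eq:exppression for A_uv}, in which the exponent $y_0^{(d+d')/2}$ splits symmetrically. Beyond that, positive definiteness of the real symmetric matrix $A$ gives positive eigenvalues of quantum multiplication by $\Delta/[pt]$, which is Theorem \ref{thm:eigenvalue>0}.
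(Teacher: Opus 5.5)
Your proof is correct. Its first half is the paper's own argument: $(B_w x)_z$ is exactly the paper's linear form $L_{w,z}(x)$, so $x^TAx=\sum_w\|B_wx\|^2$ is the identity $\Omega(x,x)=\sum_{w,z}L_{w,z}(x)^2$.

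One small correction there. The rows of $B_w$ must be indexed by $z$ alone, with the sum over $d'$ inside, as in your displayed definition. Indexing by pairs $(z,d')$ would force $d=d'$ and drop the cross terms with $d\neq d'$. Those cross terms are exactly what produce the entries $A_{vu}$ with $l(u)\neq l(v)$. For $\mathrm{Gr}(2,4)$, for instance, the entry pairing $\sigma_u=\mathbf{1}$ with $\sigma_v=[pt]$ is $2$, but it would come out $0$.

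The genuine difference from the paper is the strict-positivity step.
\begin{itemize}
\item The paper's proof imports semisimplicity of the quantum cohomology at $q=1$ (Chaput--Manivel--Perrin). It turns this into invertibility of $\Delta$ via Abrams' theorem, and combines that with invertibility of $[pt]$ to rule out a zero eigenvalue.
\item You isolate the summand $w=\mathrm{id}$. There the unit property gives $C^{d',z}_{u,\mathrm{id}}=\delta_{d',0}\delta_{uz}$, so $B_{\mathrm{id}}=y(\mathrm{id})^{-1/2}I$.
\end{itemize}

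What your route buys:
\begin{itemize}
\item It is self-contained.
\item It gives the quantitative bound $x^TAx\ge y(\mathrm{id})^{-1}\|x\|^2=\|x\|^2$, using $y(\mathrm{id})=1$, which follows from $\iota(\mathbf{1})=\mathbf{1}$. Hence every eigenvalue of $\Delta/[pt]$ is at least $1$.
\item Read through Abrams' theorem in the other direction, it reproves semisimplicity instead of assuming it.
\end{itemize}

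In substance this is the argument of the remark following the proposition in the paper. That remark reaches the same conclusion, $L_{\mathrm{id},z}(x)=0$ for all $z$ implies $x=0$, more circuitously: it specializes $q=y_0^{1/2}$ and multiplies $\tilde{x}$ by $\mathbf{1}$. The paper's main proof is shorter only because it outsources the nondegeneracy to external theorems.
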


\begin{proof}
By Lemma \ref{lem:matrixA}, $A$ is a real symmetric matrix. So it defines a quadratic form
$\Omega$ on $H^*(X, \mathbb{R})$.
For all $x=\sum_{u\in W_X}a_u\sigma_u, y=\sum_{u\in W_X}b_u\sigma_u  \in H^*(X, \mathbb{R})$, we have
\begin{eqnarray*}
\Omega(x,y) &:=& \sum_{u, v \in W_X} a_u b_v A_{uv}
 = \sum_{\substack{u,v,w,z\in W_X\\d,d'\geq0}} a_u b_v C_{uw}^{d',z}C_{vw}^{d,z}\frac{y(z)y_0^{\frac{d+d'}{2}}}{(y(u)y(v))^{1/2}y(w)} \\
 &=& \sum_{w,z\in W_X} L_{w,z}(x) L_{w,z}(y),
\end{eqnarray*}
where
\[L_{w,z}(x):=\sum_{u\in W_X}a_u\sum_{d\geq0} C^{d,z}_{uw} \left( \frac{y(z)y_0^d}{y(u)y(w)} \right)^{\frac{1}{2}} \]
for $w, z \in W_X$ and $x=\sum_{u\in W_X}a_u\sigma_u \in H^*(X, \mathbb{R})$.
In particular
\begin{equation} \label{eqn:Omegaxx}
\Omega(x,x)=\sum_{w,z\in W_X} L_{w,z}(x)^2 \geq 0
\end{equation}  for all $x\in H^*(X, \mathbb{R})$.
Hence all eigenvalues of $A$ are non-negative real numbers.

It was proved in \cite{CMP3} that the small quantum cohomology of $X$ specialized at $q=1$ is semisimple. By Theorem 3.4 in \cite{A}, this is equivalent to
the invertibility of $\Delta$ in the specialized small quantum cohomology. Since $[pt]$ is also invertible, $A$ can not have zero eigenvalue. Hence all eigenvalues of $A$
are positive. Therefore  $A$ is positive definite.
\end{proof}

Since eigenvalues of a linear operator do not depend on the choices of basis, an immediate consequence of
Proposition~\ref{prop: main result that A is symmetric} is that
all eigenvalues of quantum multiplication by $\Delta/[pt]$ are positive real numbers.
This proves Theorem~\ref{thm:eigenvalue>0}.

\begin{remark}
Quantum multiplication
by $\Delta$ could have negative eigenvalues (see for example, the case of quadrics in Section \ref{subsect: even quadrics}).
\end{remark}

\begin{remark}
In the proof of Proposition~\ref{prop: main result that A is symmetric},
we can also prove $A$ is positive definite without using the semisimplicity of the quantum cohomology of $X$.
By Theorem 3.4 in \cite{A}, this also gives a new proof for the semisimplicity of the quantum cohomology of $X$.
As in the proof of Proposition~\ref{prop: main result that A is symmetric}, we only need to show
$\Omega(x, x) \neq 0$ for all non-zero $x \in H^*(X, \mathbb{R})$.

Assume $\Omega(x,x)=0$ for some $x=\sum_{u\in W_X} a_u \sigma_u$ with $a_u \in \mathbb{R}$.
By equation \eqref{eqn:Omegaxx}, we must have
 $L_{w,z}(x)=0$ for all $w,z\in W_X$.
Let $\tilde{x}:=\sum_{u\in W_X}\frac{a_u}{y(u)^{\frac{1}{2}}}\sigma_u$.
Consider the quantum product on $H^*(X, \mathbb{R})$ defined by setting $q=y_0^\frac{1}{2}$.
This is well defined since the summation over $d$ in equation \eqref{formula:  multiply schubert classes} is a finite sum.
Then
\begin{align*}
    \tilde{x}*\sigma_w
    &=\sum_{u\in W_X}\sum_{d\geq0}\sum_{z\in W_X} a_u C^{d,z}_{uw} \left( \frac{y_0^d}{y(u)} \right)^{\frac{1}{2}}\sigma_z
    =\sum_{z\in W_X} \left(\frac{y(w)}{y(z)}\right)^{\frac{1}{2}} L_{w,z}(x)\sigma_z=0.
\end{align*}
Taking $\sigma_w=\mathbf{1}$, we get $\tilde{x}=0$. Hence $x=0$. This shows that $A$ is positive definite.
\end{remark}


%





\subsection{Estimate the size of $\mathfrak{S}_\infty$}
\label{subsec: compute constructible state}

In this subsection, we prove the following result which is a special case of Theorem \ref{thm:  main thm 1}.

\begin{theorem}\label{thm: main thm for finiteness}
    Assume $X$ is a (co)minuscule variety. For any reference state $S_0$, the number of points in $\mathfrak{S}_\infty$ is less than or equal to $\theta$.
\end{theorem}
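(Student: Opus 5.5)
Set $q=1$ and write $\Delta = B * P$, where $P$ is quantum multiplication by $[pt]$ and $B$ is quantum multiplication by $\Delta/[pt]$. These two operators commute. By Proposition~\ref{prop: main result that A is symmetric}, $B$ is diagonalizable with positive real eigenvalues $\lambda_1>\lambda_2>\cdots>\lambda_s>0$. By Theorem~\ref{thm mutiply class of pt} and equation~\eqref{eqn: certain power of [pt]}, $P$ is a monomial operator satisfying $P^\theta = \mathrm{id}$ at $q=1$. So $P$ is diagonalizable, its eigenvalues are $\theta$-th roots of unity, and $P$ preserves each eigenspace $E_j$ of $B$.

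Fix a reference state $S_0=[x]$ and decompose $x=\sum_j x_j$ with $x_j\in E_j$. Let $j_0$ be the smallest index with $x_{j_0}\neq 0$. Then
\[
\Delta^{*k} x = \sum_j \lambda_j^k P^k x_j,
\qquad
\lambda_{j_0}^{-k}\Delta^{*k}x = P^k x_{j_0} + \sum_{j>j_0} (\lambda_j/\lambda_{j_0})^k P^k x_j .
\]
The tail tends to $0$ because $P$ has finite order, so its powers are bounded. Moreover $P^k x_{j_0}\neq 0$, since $P$ is invertible. Hence for each residue $r \bmod \theta$, the subsequence $\Delta^{*(m\theta+r)} * S_0$ converges in $\mathbb{P}H^*(X)$ to $[P^r x_{j_0}]$ as $m\to\infty$.

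Now take $S\in\mathfrak{S}_\infty$. It lies in the closure of the orbit $\{\Delta^{*k}*S_0\}$ but is not in the orbit itself. So there is a sequence $k_i$ with $\Delta^{*k_i}*S_0\to S$. If the $k_i$ were bounded, then some fixed $k$ would occur infinitely often, giving $S=\Delta^{*k}*S_0$, which has finite complexity, a contradiction. Hence $k_i\to\infty$. Passing to a subsequence on which all $k_i$ lie in one residue class mod $\theta$, we get $S=[P^r x_{j_0}]$ for some $r\in\{0,\dots,\theta-1\}$. Therefore $\mathfrak{S}_\infty$ has at most $\theta$ points.

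The main work is the careful setup: checking that $\Delta^{*k}x\neq 0$ for all $k$, so that every state in the orbit is defined. This holds because $\Delta$ is invertible by semisimplicity (Abrams). We also need that $P$ and $B$ commute and that $P$ is diagonalizable of finite order at $q=1$. The convergence argument uses strict inequalities between the $\lambda_j$, and this is where the positivity and reality of the eigenvalues of $B$ are essential. Distinct positive reals have distinct absolute values, so nothing like the dense torus orbits of \cite{CFS} can occur. The only residual unimodular factor comes from $P$, which is periodic.
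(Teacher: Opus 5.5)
Your proof is correct and is essentially the paper's argument. The paper diagonalizes $\Delta^{*\theta}$ (whose matrix at $q=1$ is the positive definite $A^\theta$), takes the leading eigencomponent $x_h$ of $x$, and pigeonholes exponents modulo $\theta$, whereas you factor $\Delta=(\Delta/[pt])*[pt]$ and use the eigenspaces of $\Delta/[pt]$, carrying the periodic operator $P$ along. These eigenspaces coincide because $\lambda\mapsto\lambda^\theta$ is injective on positive reals, so your limit points $[P^r x_{j_0}]$ are exactly the paper's $[\Delta^{*r}*x_h]$. Your explicit argument that the exponents $k_i$ must tend to infinity also fills in a step the paper leaves implicit.
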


\begin{proof}
By equation \eqref{eqn:  certain power of [pt]}, $(\Delta/[pt])^{*\theta} = q^{-n(X)} \Delta^{*\theta}$. So,
with respect to the basis $\{ \tilde{\sigma}_w \mid w \in W_X\}$ defined by equation \eqref{eqn:f(w)}, the matrix of
quantum multiplication by $\Delta^{*\theta}$ is $q^{n(X)} A^\theta$,
where $A$ is given by Lemma \eqref{lem:matrixA}.
To study complexity of quantum cohomology, we set $q=1$. Then $\{ \tilde{\sigma}_w \mid w \in W_X\}$ is a basis of $H^*(X)$ and
the matrix of quantum multiplication by $\Delta^{*\theta}$ with respect to this basis is $A^\theta$.
By Proposition~\ref{prop: main result that A is symmetric}, $A^\theta$ is a positive definite real symmetric matrix.
So $H^*(X)$ can be decomposed as a direct sum of eigenspaces $E_1, \cdots, E_k$ of quantum multiplication by
$\Delta^{*\theta}$ with eigenvalues
$\lambda_1 > \cdots > \lambda_k > 0$ respectively.
For any reference state $S_0=[x] \in \mathbb{P}H^*(X)$ where $x \in H^*(X)$,
there exists unique $1 \leq h \leq k$ and $x_i \in E_i$ for $h \leq i \leq k$ such that
\[ x = x_h + x_{h+1} + \cdots + x_k , \hspace{20pt} x_h \neq 0. \]

For any $S \in \mathfrak{S}_\infty$, there exists an infinite sequence $\{n_j \mid j \geq 1\}$ such that
$\Delta^{*n_j} * S_0$ converges to $S$ as $j \rightarrow \infty$.
Since $\theta$ is finite, there exists $0 \leq r \leq \theta - 1$ such that
there exists an infinite subsequence, still denoted by $\{n_j \mid j \geq 1\}$, which is contained
in the set $\{ k \theta + r \mid k \in \mathbb{Z} \}$. For $n_j$ in this subsequence, we can write
$n_j = m_j  \theta + r$ for some $m_j \in \mathbb{Z}$. Note that $\Delta^{*\theta} * x_i=\lambda_i x_i$
for  $h \leq i \leq k$. We have
\begin{align*}
    \Delta^{*n_j} * S_0 & = \Delta^{*r} * \Delta^{*\theta m_j} * S_0
    =\Delta^{*r} * \bigg[ \sum_{h \leq i\leq k} \lambda_i^{m_j} x_i \bigg] \\
    & =\Delta^{*r} * \bigg[ x_h + \sum_{h+1 \leq i\leq k} \left( \frac{\lambda_i}{\lambda_h} \right)^{m_j} x_i \bigg]
        \in \mathbb{P}H^*(X).
\end{align*}
which converges to $\Delta^{*r} * [x_h] = \left[ \Delta^{*r} * x_h \right]$ as $j \rightarrow \infty$ since
$\lambda_i < \lambda_h$ for $i>h$.
Note that $\Delta^{*r} * x_h \neq 0$ since $x_h \neq 0$ and $\Delta$ is invertible. So
$\left[ \Delta^{*r} * x_h \right] \in \mathbb{P}H^*(X)$ is well defined.
Therefore we have $S=\left[ \Delta^{*r} * x_h \right]$.
Consequently $\mathfrak{S}_\infty \subset \{ \left[ \Delta^{*r} * x_h \right] \mid 0 \leq r \leq \theta - 1\}$
which is a finite set. Moreover, $\mathfrak{S}_\infty$ contains at most $\theta$ points.
\end{proof}

\begin{remark}
The same proof shows that
$\mathfrak{S}_\infty$ is equal to the set obtained from $\{ \left[ \Delta^{*r} * x_h \right] \mid 0 \leq r \leq \theta - 1\}$
by removing those states with finite complexity.
\end{remark}

\section{Estimate dimension of $\mathfrak{F}$}
\label{section: Periodicity of action of Delta}

Let $X$ be a compact symplectic manifold with $H_2(X, \mathbb{Z}) \cong \mathbb{Z}$.
We will assume $X$ only has non-trivial cohomology classes of even real degree
(Otherwise, we can also restrict to the space of cohomology classes with even degrees).
In this section, we give a submodule of $QH^*(X)_q = H^*(X) \otimes \mathbb{C}[q,q^{-1}]$ which contains
\begin{equation} \label{eqn:Fq}
 \mathfrak{F}_q := {\rm Span}_{\mathbb{C}[q,q^{-1}]} \{ \Delta^{*k} \in QH^*(X) \mid k \geq 0 \}.
\end{equation}
For Fano varieties, the space $\mathfrak{F}$ defined by equation \eqref{eqn:F} can be obtained from $\mathfrak{F}_q$ by setting $q=1$.
The rank of this submodule gives an upper bound for the dimension of $\mathfrak{F}$.


Let $\tau=\deg(q) = \deg_{\mathbb{R}}(q)/2$ and $\dim(X) = \dim_{\mathbb{R}}(X)/2$.
For all $i \in \mathbb{Z}$, define
\begin{equation} \label{eqn:Ri}
 V_i:= \bigoplus_{k \equiv i ({\rm mod}\,\, \tau)} H^{2k}(X).
\end{equation}
Note that $V_i=V_j$ for $i \equiv j \text{ }(\text{mod} \text{ } \tau)$. We have
\begin{lemma}\label{lem: multiplying Delta between subspaces}
    Quantum multiplication by the handle element $\Delta$ maps $V_i$ into  $V_{i+\text{dim(X)}}\otimes\mathbb{C}[q]$.
\end{lemma}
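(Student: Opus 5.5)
The plan is a degree count. Quantum multiplication is homogeneous once $q$ is given degree $\tau$. So the first step is to show that $\Delta$ is homogeneous of complex degree $\dim(X)$. By equation \eqref{eqn:Delta}, and equivalently \eqref{eq:DeltaHdual}, I would choose a homogeneous basis $\{e_i\}$ of $H^*(X)$. The Poincar\'e pairing $g(e_i,e_j)$ is nonzero only if $\deg e_i+\deg e_j=\dim(X)$, so the dual basis element $\check e_i$ is homogeneous of degree $\dim(X)-\deg e_i$. Hence each summand $e_i*\check e_i$ is homogeneous of total degree $\dim(X)$, counting $q$ with degree $\tau$. Expanding via \eqref{eqn:qprod}, this gives
\[
\Delta=\sum_{d\ge 0} q^d \Delta_d, \qquad \Delta_d\in H^{2(\dim(X)-d\tau)}(X).
\]
This is a finite sum, and only nonnegative powers of $q$ occur because $QH^*(X)=H^*(X)\otimes\mathbb{C}[q]$.

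Next I take $x\in H^{2k}(X)$ with $k\equiv i \pmod \tau$. By the selection rule \eqref{eqn:Select}, each term of $\Delta_d * x$ has the form $q^{d'}$ times a class of degree $k+\dim(X)-d\tau-d'\tau$. So
\[
\Delta*x=\sum_{m\ge 0} q^m y_m, \qquad y_m\in H^{2(k+\dim(X)-m\tau)}(X).
\]
Every degree $k+\dim(X)-m\tau$ is congruent to $i+\dim(X)$ modulo $\tau$. Hence each $y_m$ lies in $V_{i+\dim(X)}$, and $\Delta*x\in V_{i+\dim(X)}\otimes\mathbb{C}[q]$. Extending by linearity over the direct sum defining $V_i$ finishes the argument.

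There is no real obstacle here. The only care needed is the bookkeeping that the pairing pairs degree $j$ with degree $\dim(X)-j$, which uses $H^{\rm odd}(X)=0$ or restriction to even classes. One also has to note that $\tau$ is well defined because $H_2(X,\mathbb{Z})\cong\mathbb{Z}$ gives a single parameter $q$.
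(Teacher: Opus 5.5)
Your proposal is correct and follows the paper's own proof: both use that $\Delta$ is homogeneous of degree $\dim(X)$ and that quantum multiplication preserves total degree with $\deg q=\tau$. Hence every term $q^{m}y_m$ of $\Delta*x$ has $\deg y_m=\deg x+\dim(X)-m\tau\equiv i+\dim(X)\pmod{\tau}$. Your dual-basis justification of $\deg\Delta=\dim(X)$ spells out a step the paper only asserts.
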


\begin{proof}
  Since quantum multiplication preserves
 degree (counting both degree of $q$ and degree of cohomology classes),  $\deg(\Delta)= \dim(X)$.
 For any $x \in V_i$, $\deg(\Delta * x)= \dim(X) + \deg(x)$.
 Hence $\Delta * x =  \sum_k C_k q^{n_k} y_k$ with  $y_k \in H^{2k}(X)$,
 $C_k \in \mathbb{C}$, $n_k \in \mathbb{Z}_{\geq 0}$, and
 \[ k = \text{dim}(X) + \deg(x) -\tau n_k \equiv \text{dim}(X) + i \,\,\,(\text{mod}\; \tau). \]
  Hence $\Delta * x \in V_{i+\text{dim(X)}} \otimes\mathbb{C}[q]$.
\end{proof}

Let
\begin{equation} \label{eqn:DX}
D_X:=\text{gcd}(\tau,\text{dim}(X)).
\end{equation}

\begin{theorem}\label{thm: subspace spanned by Delta^i}
For any compact symplectic manifold $X$ with $H_2(X, \mathbb{Z}) \cong \mathbb{Z}$ and $H^{\rm odd}(X)=0$,
    \begin{equation} \label{eqn:SpaceContainF}
    \mathfrak{F}_q \subset \bigoplus_{\substack{ j \equiv 0\;(mod\;D_X) \\ 0 \leq j \leq \tau-1 }}
               V_j\otimes\mathbb{C}[q,q^{-1}].
    \end{equation}
\end{theorem}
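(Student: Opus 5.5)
Since $\mathfrak{F}_q$ is the $\mathbb{C}[q,q^{-1}]$-span of the powers $\Delta^{*k}$, $k\ge 0$, and the right hand side of \eqref{eqn:SpaceContainF} is a $\mathbb{C}[q,q^{-1}]$-submodule of $QH^*(X)_q$, it suffices to show $\Delta^{*k}\in V_{k\dim(X)}\otimes\mathbb{C}[q,q^{-1}]$ for every $k\ge 0$, and then to check that every index $k\dim(X)$ is congruent modulo $\tau$ to some $j\in\{0,\dots,\tau-1\}$ with $D_X\mid j$.

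We argue by induction on $k$. For $k=0$ we have $\Delta^{*0}=\mathbf{1}\in H^0(X)\subset V_0$. Suppose $\Delta^{*k}\in V_{k\dim(X)}\otimes\mathbb{C}[q,q^{-1}]$, and write $\Delta^{*k}=\sum_m q^{m}x_m$ with $x_m\in V_{k\dim(X)}$. Lemma~\ref{lem: multiplying Delta between subspaces} gives $\Delta*x_m\in V_{(k+1)\dim(X)}\otimes\mathbb{C}[q]$. Quantum multiplication is $\mathbb{C}[q,q^{-1}]$-linear, so
\[
\Delta^{*(k+1)}=\sum_m q^m\,\Delta*x_m\in V_{(k+1)\dim(X)}\otimes\mathbb{C}[q,q^{-1}].
\]
(Alternatively, one can skip the induction and use degree counting directly: $\Delta^{*k}$ is homogeneous of degree $k\dim(X)$, so each component $q^n y$ with $y\in H^{2\ell}(X)$ satisfies $\ell=k\dim(X)-n\tau\equiv k\dim(X)\pmod\tau$.)

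For the index bookkeeping, let $j$ be the residue of $k\dim(X)$ modulo $\tau$, with $0\le j\le\tau-1$. Since $V_i$ depends only on $i$ modulo $\tau$, we have $V_{k\dim(X)}=V_j$. Because $D_X=\gcd(\tau,\dim(X))$ divides both $\tau$ and $k\dim(X)$, it divides $j$. Hence every $\Delta^{*k}$ lies in a summand $V_j\otimes\mathbb{C}[q,q^{-1}]$ with $j\equiv 0 \pmod{D_X}$, which proves the inclusion \eqref{eqn:SpaceContainF}.

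There is no serious obstacle; the argument is pure degree bookkeeping. The one point to be careful about is that the $V_j$ with $0\le j\le\tau-1$ are pairwise distinct direct summands of $H^*(X)$, so the right hand side of \eqref{eqn:SpaceContainF} really is a direct sum. This holds because distinct residues modulo $\tau$ give disjoint sets of degrees.
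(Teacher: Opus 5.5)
Your proof is correct and is essentially the paper's argument. The paper also gets $\Delta^{*k}=\Delta^{*k}*\mathbf{1}\in V_{k\dim(X)}\otimes\mathbb{C}[q]$ by applying Lemma~\ref{lem: multiplying Delta between subspaces} repeatedly, which your induction makes explicit, and then observes that $D_X$ divides the residue $j$ of $k\dim(X)$ modulo $\tau$; your alternative direct degree count is equally valid.
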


\begin{proof}
Let $d = \dim(X)$. First note that $\mathbf{1} \in V_0$. For all integers $k \geq 0$,
by Lemma \ref{lem: multiplying Delta between subspaces},
\[ \Delta^{*k} = \Delta^{*k} * \mathbf{1} \in V_{k d} \otimes \mathbb{C}[q]. \]
There exist integers $m$ and $0 \leq j \leq \tau-1$ such that $kd = m \tau + j$.
Then $\Delta^{*k} \in V_j \otimes \mathbb{C}[q]$ since $j \equiv kd \;(mod\; \tau)$ and $V_{k d}=V_j$. Moreover, we also
have  $j \equiv 0\;(mod\;D_X)$ since both $d, \tau \equiv 0\;(mod\;D_X)$.
Since $\mathfrak{F}_q$ is spanned by $\{ \Delta^{*k} \mid k \geq 0\}$,
this finishes the proof of the theorem.
\end{proof}

This theorem implies that the rank of $\mathfrak{F}_q$ over $\mathbb{C}[q,q^{-1}]$ has the following upper bound:
\begin{equation} \label{eqn:DimBoundX-AnyPt}
 \mathrm{rank} (\mathfrak{F}_q) \leq \sum_{\substack{ j \equiv 0\;(mod\;D_X) \\ 0 \leq j \leq \tau-1 }} \dim (V_j ).
\end{equation}

\begin{lemma}  \label{lem: same of cardinality}
    Assume there exists an invertible homogeneous element $\Phi \in QH^*(X)_q$ with $\deg(\Phi)=\dim(X)$.
    For any $i,j\in\mathbb{Z}$ with $i\equiv j\,(\text{mod}\;D_X)$, we have an isomorphism  between
    $V_i\otimes\mathbb{C}[q,q^{-1}]$ and $V_j \otimes \mathbb{C}[q,q^{-1}]$ as modules over $\mathbb{C}[q,q^{-1}]$.
\end{lemma}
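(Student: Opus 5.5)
The plan is to use quantum multiplication by $\Phi$ and by powers of $q$ to build the isomorphism. Since $\Phi$ is homogeneous of degree $\dim(X)$, the proof of Lemma \ref{lem: multiplying Delta between subspaces} carries over verbatim: it used only that $\deg(\Delta)=\dim(X)$. Hence $\Phi*$ maps $V_i\otimes\mathbb{C}[q,q^{-1}]$ into $V_{i+\dim(X)}\otimes\mathbb{C}[q,q^{-1}]$ for every $i$. The same degree count shows that $\Phi^{-1}$, which is homogeneous of degree $-\dim(X)$, maps $V_{i+\dim(X)}\otimes\mathbb{C}[q,q^{-1}]$ back into $V_i\otimes\mathbb{C}[q,q^{-1}]$. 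The only change is that negative powers of $q$ may now appear; this is harmless because we work over $\mathbb{C}[q,q^{-1}]$.

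To see that $\Phi^{-1}$ is homogeneous, I would argue as follows. Write $\Phi^{-1}$ as a sum of homogeneous components. Then $\Phi*\Phi^{-1}=\mathbf{1}$, and quantum multiplication preserves total degree, so only the degree $-\dim(X)$ component contributes. Uniqueness of inverses then forces $\Phi^{-1}$ to equal that component.

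Because $\Phi*$ and $\Phi^{-1}*$ are $\mathbb{C}[q,q^{-1}]$-linear and mutually inverse, they give an isomorphism of $\mathbb{C}[q,q^{-1}]$-modules
\[ V_i\otimes\mathbb{C}[q,q^{-1}]\cong V_{i+\dim(X)}\otimes\mathbb{C}[q,q^{-1}]. \]
Iterating this, and also using the inverse direction, gives $V_i\otimes\mathbb{C}[q,q^{-1}]\cong V_{i+m\dim(X)}\otimes\mathbb{C}[q,q^{-1}]$ for every $m\in\mathbb{Z}$. Recall that $V_i$ depends only on $i$ modulo $\tau$. So we may shift the index by any integer combination $m\dim(X)+n\tau$.

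Now suppose $i\equiv j \pmod{D_X}$. By B\'ezout there are integers $m,n$ with $j-i=m\dim(X)+n\tau$. Then
\[ V_i\otimes\mathbb{C}[q,q^{-1}]\cong V_{i+m\dim(X)}\otimes\mathbb{C}[q,q^{-1}]=V_j\otimes\mathbb{C}[q,q^{-1}], \]
where the final equality holds because $i+m\dim(X)\equiv j \pmod{\tau}$.

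The main point to check carefully is the degree bookkeeping for $\Phi^{-1}$: its expansion contains negative powers of $q$, and one must confirm that the congruence $k\equiv \deg(x)-\dim(X)\pmod{\tau}$ still holds for every term. This follows directly from homogeneity.
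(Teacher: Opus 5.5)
Your proof is correct and follows essentially the same route as the paper: multiplication by a power of $\Phi$ gives the map, multiplication by the matching power of $\Phi^{-1}$ (homogeneous of the negative degree, which you justify and the paper simply asserts) gives its inverse, and B\'ezout for $\gcd(\tau,\dim(X))$ aligns the indices modulo $\tau$. The only difference is that the paper arranges a nonnegative exponent of $\Phi$ (writing $D_X=a\dim(X)+b\tau$ with $a\ge 0$ and $s>0$), because it later reuses this with $\Phi=\Delta$ to get $V_j\otimes\mathbb{C}[q,q^{-1}]\subset\mathfrak{F}_q$ for $Gr(2,n)$; your signed $m$ is enough for the lemma and can be made nonnegative by adding a multiple of $\tau/D_X$.
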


\begin{proof}
    Since $D_X$ divides $\tau$, we can choose integer $M>0$ such that $s:=\frac{M\tau+j-i}{D_X}$ is a positive integer.
    Then $i+sD_X \equiv j \;(\text{mod}\;\tau)$.
    Let $d = \dim(X) = \text{deg}(\Phi)$.
    There exist integers $a \geq 0 $ and $b$ such that $D_X = a d + b \tau$
    (see for example, Exercise 8 on page 15 in \cite{IR}).
    Then $j \equiv i + s D_X   \equiv i + sad  \;(\text{mod}\;\tau).$
    For $x \in V_i$,
    $\Phi^{*sa} * x \in V_{sad+ i}\otimes\mathbb{C}[q,q^{-1}] = V_{j}\otimes\mathbb{C}[q,q^{-1}]$.
    So quantum multiplication by $\Phi^{*sa}$ gives a linear map
    \[  \Phi^{*sa}* : V_i \otimes\mathbb{C}[q,q^{-1}] \longrightarrow V_{j} \otimes\mathbb{C}[q,q^{-1}].\]

    Because $\Phi$ is invertible, $\Phi^{*(-sa)}=(\Phi^{-1})^{*sa}$ is well-defined and is
homogeneous of degree $-sad$. Since $j-sad\equiv i\pmod{\tau}$,
the quantum multiplication by $\Phi^{*(-sa)}$ defines a map
\[
\Phi^{*(-sa)}*:
V_j\otimes_{\mathbb C}\mathbb C[q,q^{-1}]
\longrightarrow
V_i\otimes_{\mathbb C}\mathbb C[q,q^{-1}],
\]
which is the inverse of $\Phi^{*sa}*$. Hence  $\Phi^{*sa}*$ gives the required
isomorphism.
\end{proof}

This lemma implies that  under the assumption of the existence of an invertible
element with degree equal to $\dim(X)$, all spaces $V_j$ on the right hand side of
inequality \eqref{eqn:DimBoundX-AnyPt}
have the same dimension and they are equal to
\[ \dim (V_0 ) = \sum_{i=0}^{\lfloor \dim(X)/\tau \rfloor} \dim H^{2i\tau}(X).\]
\begin{theorem} \label{thm:DimBoundX}
Let $X$ be a compact symplectic manifold with $H_2(X, \mathbb{Z}) \cong \mathbb{Z}$ and $H^{\rm odd}(X)=0$.
Assume either $\Delta$ or $[pt]$ is invertible in $QH^*(X)_q$. Then
the rank of $\mathfrak{F}_q$ over $\mathbb{C}[q,q^{-1}]$ has the following upper bound:
\begin{equation} \label{eqn:DimBoundX}
 \mathrm{rank} (\mathfrak{F}_q) \leq \frac{\tau}{D_X}  \sum_{i=0}^{\lfloor \dim(X)/\tau \rfloor} \dim H^{2i\tau}(X).
\end{equation}
\end{theorem}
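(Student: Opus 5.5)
The plan is to combine Theorem \ref{thm: subspace spanned by Delta^i}, in the form of inequality \eqref{eqn:DimBoundX-AnyPt}, with Lemma \ref{lem: same of cardinality}. The first bounds $\mathrm{rank}(\mathfrak{F}_q)$ by a sum of $\dim(V_j)$. The second shows that all these summands are equal, once we have an invertible homogeneous element $\Phi$ with $\deg(\Phi)=\dim(X)$. It remains to produce such a $\Phi$ from the hypothesis and to count the summands.

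First I take $\Phi$. If $\Delta$ is invertible in $QH^*(X)_q$, set $\Phi=\Delta$. It is homogeneous of degree $\dim(X)$ because $\Delta=\sum_{i,j} g^{ij} e_i*e_j$ with $g^{ij}\neq 0$ only when $\deg e_i+\deg e_j=\dim(X)$, and quantum product preserves total degree (as in the proof of Lemma \ref{lem: multiplying Delta between subspaces}). If instead $[pt]$ is invertible, set $\Phi=[pt]$, which has degree $\dim(X)$. In either case Lemma \ref{lem: same of cardinality} applies. For every $j\equiv 0 \pmod{D_X}$, it gives $V_j\otimes\mathbb{C}[q,q^{-1}]\cong V_0\otimes\mathbb{C}[q,q^{-1}]$ as $\mathbb{C}[q,q^{-1}]$-modules. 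Both sides are free, so their ranks agree and $\dim V_j=\dim V_0$.

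Next I compute $\dim V_0$. By definition \eqref{eqn:Ri}, $V_0=\bigoplus_{k\equiv 0 \pmod \tau} H^{2k}(X)$. Since $H^{2k}(X)=0$ unless $0\le k\le \dim(X)$, this equals $\sum_{i=0}^{\lfloor \dim(X)/\tau\rfloor}\dim H^{2i\tau}(X)$. Then I count the indices $j$ with $0\le j\le \tau-1$ and $D_X\mid j$. Since $D_X\mid\tau$, there are exactly $\tau/D_X$ of them. Substituting into \eqref{eqn:DimBoundX-AnyPt} gives $\mathrm{rank}(\mathfrak{F}_q)\le \frac{\tau}{D_X}\dim V_0$, which is \eqref{eqn:DimBoundX}.

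No step is really an obstacle, since the work is done by the two earlier results. The only points needing care are the homogeneity of $\Delta$ in the case where $\Delta$ is the invertible element, and the remark that isomorphic free modules have equal rank, which turns Lemma \ref{lem: same of cardinality} into the equality $\dim V_j=\dim V_0$.
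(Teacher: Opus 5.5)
Your proposal is correct and follows the paper's own argument. The paper combines inequality \eqref{eqn:DimBoundX-AnyPt} with Lemma \ref{lem: same of cardinality}, taking $\Phi=\Delta$ or $\Phi=[pt]$, so that each of the $\tau/D_X$ summands equals $\dim V_0$. Your remarks on the homogeneity of $\Delta$ and the invariance of rank for free modules only make explicit what the paper leaves implicit.
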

This finishes the proof of the first half of Theorem \ref{thm:  main thm 2}.
We will see in Theorem \ref{thm:  case of Gr(2,n)} that  this bound is sharp for $Gr(2,n)$.

\begin{remark} \label{rem:InvSubSp}
If $[pt]$ is invertible, then $\deg(\Delta/[pt])=0$. Hence quantum multiplication by $\Delta/[pt]$
preserves $V_i \otimes \mathbb{C}[q,q^{-1}]$ for all $i$.
Hence
\[ QH^*(X)_q = \bigoplus_{i=0}^{\tau-1} V_i \otimes \mathbb{C}[q,q^{-1}]\]
 is a decomposition of $QH^*(X)_q$ as a direct sum of invariant submodules of the action
by quantum multiplication of $\Delta/[pt]$.
\end{remark}

\begin{remark}
Note that for all (co)minuscule homogeneous varieties $X$, $[pt]$ is invertible in $QH^*(X)_q$
by Theorem \ref{thm mutiply class of pt}. Moreover $H_2(X, \mathbb{Z}) \cong \mathbb{Z}$ and $H^{\rm odd}(X)=0$.
Hence all results in this section hold for $X$.
\end{remark}

\section{Grassmannians}\label{sec: quantum cohomology}

Let $X=\text{Gr}(k,n)$ be the Grassmannian consisting of all $k$-dimensional subspaces of $\mathbb{C}^n$. This is a
cominuscule homogeneous variety. The small quantum cohomology of $X$ has been well studied in
\cite{ST}, \cite{B}, \cite{BCF}, \cite{Bu}, \cite{P}. In this section, we will give an explicit
formula for the handle element $\Delta$ as a linear combination of Schubert classes
(see Theorem ~\ref{thm:  formula for Delta} and Corollary~\ref{cor:  formula of Delta for Gr(2,n)})
 and give an explicit upper bound for the
dimension of $\mathfrak{F}$ defined by equation \eqref{eqn:F}.
In particular, we will complete the proof for Theorem~\ref{thm:  main thm 2}.

\subsection{Quantum cohomology of Grassmannians}

We first recall some basic facts about quantum cohomology of Grassmannians.
Let $c_i$ be the $i$-th Chern class of the tautological $k$-bundle of $X$ and $s_j$ the $j$-th Chern class of the universal quotient bundle. $s_j$ can be computed from $c_i$ via the following recursion relation:
\begin{equation}\label{eq:inductive definition of s_j}
    s_j=-s_{j-1}\cdot c_1-s_{j-2}\cdot c_2-...-s_0\cdot c_j, \hspace{10pt} {\rm for \,\,\,} j \geq 1,
\end{equation}
\noindent with $c_0=s_0= \mathbf{1}$. Moreover $c_i=0$ if $i >k$ and $s_j=0$ if $j>n-k$. We also set $c_i=s_i=0$ if $i<0$.
For any partition $\lambda=(\lambda_1,...,\lambda_l)$ with integral parts $\lambda_1 \geq...\geq \lambda_l > 0$,
we can define a {\it Schubert class} of (complex) degree $|\lambda|:= \sum_{i=1}^l \lambda_i$ by
\begin{equation}\label{eq:SchbertClass}
\sigma_\lambda := \text{det}(s_{\lambda_i+j-i})_{1\leq i, j\leq l}.
\end{equation}
The number of positive parts of $\lambda$ is called its {\it length} and is denoted by $l(\lambda)$.
Note that adding $0$ to $\lambda$ does not change the value of $\sigma_\lambda$.
We can set $\lambda_i=0$ for all $i>l(\lambda)$.
Define
\begin{equation}\label{eq:definition of P_kn}
    \mathcal{P}_{kn} := \{(\lambda_1,...,\lambda_k) \in \mathbb{Z}^k \vert\;
               n-k\geq\lambda_1\geq\lambda_2\geq...\geq \lambda_k \geq 0\}.
\end{equation}
In other words, $\mathcal{P}_{kn}$ is the set of all partitions whose Young diagrams are contained in the $k\times (n-k)$ rectangle.
Then
$\{ \sigma_{\lambda} \mid \lambda \in {\mathcal P}_{kn} \}$ form a basis of $H^*(X)$ (see, for example, p271 in \cite{Fu}).
Moreover $\sigma_{\lambda} = 0$ if $\lambda \notin \mathcal{P}_{kn}$.
The {\it Poincar\'{e} intersection pairing} on $H^*(X)$ is given by
\begin{equation} \label{eqn:PIPGrassmann}
 \langle \sigma_\mu, \,\, \sigma_{p(\nu)} \rangle = \delta_{\mu \nu}
    \hspace{10pt} {\rm for \,\,\, all \,\,\,} \mu, \nu  \in \mathcal{P}_{kn},
\end{equation}
where $p(\lambda)$ is the {\it complement} of $\lambda=(\lambda_1, \ldots, \lambda_k) \in \mathcal{P}_{kn}$ defined by
\begin{equation}\label{eq: definition of complementary partition}
    p(\lambda):=(n-k-\lambda_k,n-k-\lambda_{k-1},...,n-k-\lambda_1) \in \mathcal{P}_{kn}.
\end{equation}

\noindent
In the quantum cohomology ring $QH^*(X)=H^*(X) \otimes \mathbb{C}[q]$, we have (cf. \cite{ST})
\[ \deg(q)=\tau=n.\]
For any homogeneous elements
$x, y \in H^*(X)$,  $\deg(x * y) = \deg(x)+\deg(y)$. Hence
$x * y$ can not have $q$ factors if $\deg(x)+\deg(y)<n$. Therefore
\begin{equation}\label{eqn: condition for coincidence of usual and quantum products}
     x*y=x\cdot y  \hspace{20pt} {\rm if \,\,\,} \text{deg}(x)+\text{deg}(y)<n.
\end{equation}

Let $\hat{s}_i :=0$ for any $i<0$, $\hat{s}_0 := \mathbf{1}$, and for $j\geq 1$,
    \begin{equation}\label{eq: def of s_j^q}
        \hat{s}_j := -\hat{s}_{j-1}*c_1-\hat{s}_{j-2}*c_2-...-\hat{s}_0*c_j  \,\,\, \in QH^*(X).
    \end{equation}
For any partition $\lambda=(\lambda_1, \ldots, \lambda_l)$, define
\begin{equation} \label{eqn: of hat(sigma)}
    \hat{\sigma}_\lambda := \text{det}^* (\hat{s}_{\lambda_i+j-i})_{1 \leq i,j\leq l} \,\,\, \in QH^*(X),
\end{equation}
where $\text{det}^*$ is the determinant of a matrix where multiplications of entries are given by quantum product.
Note that $(1+c_1+c_2+...)*(1+\hat{s}_1+\hat{s}_2+...)=1$ by definition of $\hat{s}_j$.
So we can use results of Section 14.5 in \cite{Fu} for the quantum cohomology ring. In particular,
since $c_i=0$ for $i>k$, by Formula (3) in Lemma 14.5.1 of \cite{Fu}, we have
\begin{equation} \label{eqn:sigmahat=0}
\hat{\sigma}_\lambda = 0  \hspace{20pt} {\rm if \,\,\,} l(\lambda) > k.
\end{equation}
But $\hat{\sigma}_\lambda$ may not be $0$ if $\lambda_1 > n-k$.
\begin{lemma} \label{lem:QGiabelli}
    $\hat{s}_j = s_j$ for $j < n$ and $\hat{\sigma}_\lambda = \sigma_\lambda$ for $\lambda \in \mathcal{P}_{kn}$.
\end{lemma}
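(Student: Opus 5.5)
The plan is to use degree counting together with the fact, recorded in equation \eqref{eqn: condition for coincidence of usual and quantum products}, that quantum and classical products agree whenever the total degree is below $\tau=n$.

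\textbf{Step 1: $\hat{s}_j=s_j$ for $j<n$.} Argue by induction on $j$. The cases $j\le 0$ are immediate from the definitions, since $\hat{s}_0=s_0=\mathbf{1}$ and both vanish for negative index. For $1\le j<n$, assume $\hat{s}_i=s_i$ for all $i<j$. Each term $\hat{s}_{j-i}*c_i$ in \eqref{eq: def of s_j^q} then equals $s_{j-i}*c_i$, where $s_{j-i}$ and $c_i$ are homogeneous of degrees $j-i$ and $i$. The total degree is $j<n$, so by \eqref{eqn: condition for coincidence of usual and quantum products} this quantum product is the cup product $s_{j-i}\cdot c_i$. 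The recursion \eqref{eq: def of s_j^q} therefore becomes exactly the classical recursion \eqref{eq:inductive definition of s_j}, and $\hat{s}_j=s_j$. In particular $\hat{s}_j=s_j=0$ for $n-k<j<n$.

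\textbf{Step 2: $\hat\sigma_\lambda=\sigma_\lambda$ for $\lambda\in\mathcal{P}_{kn}$.} Fix $\lambda\in\mathcal{P}_{kn}$ and set $l=k$, padding with zeros. The entries of the matrix are $\hat{s}_{\lambda_i+j-i}$ with $1\le i,j\le k$. Their indices satisfy $\lambda_i+j-i\le (n-k)+(k-1)=n-1<n$, so Step 1 lets us replace every entry $\hat{s}_{\lambda_i+j-i}$ by $s_{\lambda_i+j-i}$.

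Now expand $\det^*$ as a signed sum over permutations $\pi$ of $k$-fold quantum products $\prod_i s_{\lambda_i+\pi(i)-i}$. Each such term has total degree $\sum_i(\lambda_i+\pi(i)-i)=|\lambda|\le k(n-k)$, which may exceed $n$. So we cannot simply apply \eqref{eqn: condition for coincidence of usual and quantum products} to the whole product, and this is the main obstacle.

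To handle it, first I would try to argue via the known quantum Giambelli formula of Bertram, which states exactly this: the Giambelli determinant for $\sigma_\lambda$, evaluated with quantum product, equals $\sigma_\lambda$. The paper cites \cite{B} and \cite{Bu}, so this could be invoked. A self-contained route would instead use Buch's argument \cite{Bu}, which applies since $\det^*$ here is a determinant over the commutative ring $QH^*(X)$. Expand the determinant along successive rows, writing $\hat\sigma_\lambda$ through the quantum Pieri rule applied to $s_{\lambda_1+j-1}*\hat\sigma_{\mu}$ for the smaller partitions $\mu$ obtained by removing the first row, and use induction on $l(\lambda)$. The key input is that the quantum correction terms in Pieri products $s_p*\sigma_\mu$ with $p\le n-k$ and $\mu\in\mathcal{P}_{kn}$ occur only when $\mu_1=n-k$ and the result would produce length $k+1$. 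In the alternating row expansion these corrections cancel pairwise, exactly as in Bertram's proof.

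If invoking the quantum Giambelli theorem is acceptable, Step 2 reduces to noting that $\hat\sigma_\lambda$ is precisely that determinant once Step 1 has identified its entries with the $s_j$.
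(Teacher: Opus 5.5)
Your proposal is correct and follows the paper's proof: induction on $j$ using that quantum and cup products agree in total degree $<n$, then the bound $\lambda_i+j-i\le n-1$ to replace every entry by $s_{\lambda_i+j-i}$, and finally Bertram's quantum Giambelli formula \cite{B}. The self-contained Pieri-based alternative you sketch is unnecessary, and its stated criterion for where quantum corrections arise is not accurate: in $\mathrm{Gr}(2,4)$ one has $s_2*\sigma_{(1,1)}=q\mathbf{1}$ even though the first part of $(1,1)$ is $1<n-k$. So simply cite \cite{B}, as the paper does.
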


\begin{proof}
Assume $j\leq n-1$. We first show $\hat{s}_j=s_j$  by induction on $j$. By definition, $\hat{s}_i=s_i$ for all $i \leq 0$.
Assume $\hat{s}_i=s_i$ for $i < j$, then  $\hat{s}_i * c_{j-i} = s_i * c_{j-i} = s_i \cdot c_{j-i}$ by
equation \eqref{eqn: condition for coincidence of usual and quantum products}. Comparing equations
\eqref{eq:inductive definition of s_j}  and \eqref{eq: def of s_j^q}, we obtain $\hat{s}_j=s_j$.

If $\lambda = (\lambda_1, \ldots, \lambda_k)  \in \mathcal{P}_{kn}$, then for $1 \leq i, j \leq k$,
\[ \lambda_i + j -i \leq (n-k)+k-1=n-1.\] Hence $\hat{s}_{\lambda_i+j-i} = s_{\lambda_i+j-i}.$
Hence
$\hat{\sigma}_\lambda = \text{det}^* (s_{\lambda_i+j-i})_{1 \leq i,j\leq k} = \sigma_\lambda, $
where the second equality is precisely the quantum Giambelli formula proved in \cite{B}.
The lemma is thus proved.
\end{proof}

By Lemma 14.5.3 in \cite{Fu}, we have
\begin{equation}\label{eq: LR formula for schubert classes}
\hat{\sigma}_\lambda*\hat{\sigma}_\mu=\sum_{\vert\nu\vert=\vert\lambda\vert+\vert\mu\vert} C^\nu_{\lambda\mu}\hat{\sigma}_\nu,
\end{equation}
for any partitions $\lambda$ and $\mu$, where $C^\nu_{\lambda\mu}$ are the Littlewood-Richardson coefficients defined in the
following way: We write $\nu \supset \lambda$ if parts of partitions $\lambda$ and $\nu$ satisfy $\lambda_i \leq \nu_i$ for all $i$.
If $\nu \supset \lambda$,
the {\it skew diagram} $\nu/\lambda$ is obtained by removing the Young diagram of $\lambda$ from the Young diagram of $\nu$.
A {\it semi-standard skew tableau} is a skew diagram with each box labelled by a positive number which is weakly increasing in each row and strictly increasing in each column. The {\it content} of a tableau is a sequence of non-negative
integers $\mu = (\mu_1, \mu_2, \ldots)$ such that $\mu_i$ is the number of $i$'s in the tableau.
We can read the positive integers in the tableau, proceeding in the first row from right to left, then the second row from the right to left, and so on. In this way we get a sequence of integers $\alpha$ called the {\it word} of the tableau. We say that the word is {\it strict} if for any $i\geq 1$ and $j$ not greater than the length of $\alpha$, the number of $i$'s occurring among the first $j$ terms is not less than the number of $(i+1)$'s occurring in these $j$ terms. We call a tableau strict if its associated word is strict.
The {\it Littlewood-Richardson coefficient} $C^\nu_{\lambda\mu}$ is defined to be the
number of strict semi-standard skew tableau with content $\mu$ on the skew diagram $\nu/\lambda$.

\begin{remark}
Note that $\hat{\sigma}_\lambda$ is different from
\[ \omega_\lambda := \text{det}^* (s_{\lambda_i+j-i})_{1 \leq i,j\leq l}, \]
which was used in \cite{BCF} and Sections 1--6 in \cite{Bu}.
In fact, $\omega_\lambda = 0$ if $\lambda_1 > n-k$ and $\omega_\lambda$ may not be $0$ if $l(\lambda)>k$.
We choose $\hat{\sigma}_\lambda$ instead of $\omega_\lambda$ since it is easier to use in computations.
For example, the formula in Lemma~\ref{lemma multiplying Omega of constant partition} has a very simple form
using $\hat{\sigma}_\lambda$. Moreover,
$\hat{\sigma}_\lambda$ agrees with $\sigma_\lambda$  used
in Section 7 in \cite{Bu}. In particular,
by Lemma 4 in \cite{Bu}, we have
\begin{equation} \label{eqn:ReduceIndex}
    \hat{s}_{j+n}=(-1)^{k+1}q\hat{s}_j \hspace{20pt} {\rm for \,\,\,}  j>-k.
\end{equation}
\end{remark}

Note that by equation \eqref{eqn:sigmahat=0}, the summation in equation \eqref{eq: LR formula for schubert classes} can be taken over partitions $\nu$ with $l(\nu)\leq k$. But in general $\nu \notin \mathcal{P}_{kn}$.
Corollary 1 in \cite{Bu} provides a way to replace $\hat{\sigma}_\nu$
by Schubert classes $\sigma_\beta$ with $\beta \in \mathcal{P}_{kn}$.
In fact, for any $I=(I_1,I_2,...,I_l) \in \mathbb{Z}^l$, we can define $\hat{\sigma}_I$  by
 replacing the partition $\lambda$ by $I$ in the definition of
$\hat{\sigma}_\lambda$ given by equation \eqref{eqn: of hat(sigma)}.
If $l \leq k$ and there exists $I_j < j-k$, then $I_j+m-j<0$ and
$\hat{s}_{I_j+m-j} = 0$ for all $1 \leq m \leq l$, which implies $\hat{\sigma}_I=0$.
By the rule for interchanging adjacent rows in the determinant which defines $\hat{\sigma}_I$, we have
\begin{align}\label{eq: rearranging rows}
    \hat{\sigma}_{I,a,a+1,J}&=0,\nonumber\\
    \hat{\sigma}_{I,a,b,J}&=-\hat{\sigma}_{I,b-1,a+1,J}
\end{align}
for any finite sequences of integers $I$ and $J$.
Given any $\nu$ with $l(\nu) \leq k$, we can use equation \eqref{eqn:ReduceIndex} to reduce
indices for $\hat{s}_*$ appeared in the definition of $\hat{\sigma}_{\nu}$ and obtain
\begin{proposition}[Corollary 1 in \cite{Bu}]
\label{prop: rearranging partition}
    Let $\nu$ be a partition with $l(\nu)\leq k$. Choose $I_j\in \mathbb{Z}$ such that
    $I_j\equiv \nu_j \text{ }(mod\text{ }n)$ and
    $j-k\leq I_j < j-k+n$ for each $1 \leq j \leq k$. Then we have
    \[
    \hat{\sigma}_\nu=(-1)^{r(k+1)}q^r\hat{\sigma}_I,
    \]
where $I=(I_1,I_2,...,I_k)$ and $r=\left( \vert\nu\vert-\sum_{j=1}^k I_j \right)/n \,\, \in \mathbb{Z}$.
\end{proposition}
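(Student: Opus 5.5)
We prove the proposition by induction on how far the entries of $\nu$ lie outside the window $[j-k, j-k+n)$, applying the periodicity relation \eqref{eqn:ReduceIndex} to one row of the determinant at a time.

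Write $\hat{\sigma}_\nu = \det^*(\hat{s}_{\nu_i+j-i})_{1\le i,j\le k}$, padding $\nu$ with zeros to length $k$. This is legitimate because $l(\nu)\le k$ and adding zero parts does not change the determinant. Fix a row $i$ and write $\nu_i = I_i + r_i n$ with $r_i\in\mathbb{Z}$. Since $I_i \equiv \nu_i \pmod n$ and $I_i$ is the unique representative in $[i-k, i-k+n)$, such an $r_i$ exists. Moreover $\nu_i\ge 0 \ge i-k$, so $r_i\ge 0$.

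For each column $j$ of row $i$ we have $I_i + j - i \ge (i-k)+1-i = 1-k > -k$. Iterating \eqref{eqn:ReduceIndex} $r_i$ times therefore gives
\[
\hat{s}_{\nu_i+j-i} = \hat{s}_{I_i+j-i+r_in} = \left((-1)^{k+1}q\right)^{r_i}\hat{s}_{I_i+j-i}.
\]
At every intermediate step the index is $I_i+j-i+tn$ with $t\ge 0$, which is still $>-k$, so the hypothesis of \eqref{eqn:ReduceIndex} holds throughout.

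The factor $\left((-1)^{k+1}q\right)^{r_i}$ is the same for every entry of row $i$. Multilinearity of $\det^*$ in rows is valid in the commutative ring $QH^*(X)$, because $\det^*$ is defined by the usual Leibniz expansion with quantum products. We can therefore pull this scalar out of row $i$. Doing so for all rows gives
\[
\hat{\sigma}_\nu = (-1)^{(k+1)\sum_i r_i}\, q^{\sum_i r_i}\,\hat{\sigma}_I .
\]
Finally, $\sum_i r_i = \left(|\nu|-\sum_j I_j\right)/n = r$, since $|\nu| = \sum_j \nu_j$ and $\nu_j - I_j = r_j n$. This also shows $r\in\mathbb{Z}$.

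The only delicate point is the range condition $j>-k$ in \eqref{eqn:ReduceIndex}. The lower bound $I_i \ge i-k$ is chosen exactly so that every index $I_i+j-i$ with $j\ge 1$ is at least $1-k$. Hence all reductions within a row are legal, and no entry needs separate treatment.
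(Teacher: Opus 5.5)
Your proof is correct and follows the route the paper indicates. The paper only says to reduce the indices $\hat{s}_{\nu_i+j-i}$ of the defining determinant using the periodicity relation \eqref{eqn:ReduceIndex}; your argument fills in that one-line justification by pulling the common factor $\left((-1)^{k+1}q\right)^{r_i}$ out of each row by multilinearity, checking that $r_i\ge 0$, and checking that every intermediate index is at least $1-k>-k$.
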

\noindent
By equation \eqref{eq: rearranging rows} and Lemma \ref{lem:QGiabelli}, for $I$ in the above proposition,
$\hat{\sigma}_I =  \epsilon \sigma_\beta$ for some
$\beta \in \mathcal{P}_{kn}$ and $\epsilon=0, \pm 1$.

The following formula will be useful in later calculations.

\begin{lemma}\label{lemma multiplying Omega of constant partition}
    If $\lambda=(a^k)$ is a partition with $k$ parts all equal to $a$, then
    \[\hat{\sigma}_\lambda*\hat{\sigma}_\mu=\hat{\sigma}_{(\mu_1+a,\mu_2+a,...,\mu_k+a)},\]

    \noindent for any partition $\mu=(\mu_1, \ldots, \mu_k)$ with $l(\mu)\leq k$.
\end{lemma}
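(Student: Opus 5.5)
The plan is to expand the left-hand side with the Littlewood--Richardson formula \eqref{eq: LR formula for schubert classes} and then show that exactly one term survives, with coefficient $1$. Take $\lambda=(a^k)$. By \eqref{eq: LR formula for schubert classes},
\[
\hat{\sigma}_{(a^k)}*\hat{\sigma}_\mu=\sum_{|\nu|=ka+|\mu|} C^\nu_{(a^k)\mu}\,\hat{\sigma}_\nu .
\]
By \eqref{eqn:sigmahat=0}, every term with $l(\nu)>k$ vanishes, so only partitions $\nu$ with $l(\nu)\le k$ matter. It therefore suffices to prove the purely combinatorial claim that, for $l(\nu)\le k$, we have $C^\nu_{(a^k)\mu}=1$ if $\nu=(\mu_1+a,\ldots,\mu_k+a)$ and $C^\nu_{(a^k)\mu}=0$ otherwise.

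To prove the claim I work directly with the definition of $C^\nu_{\lambda\mu}$ as the number of strict semi-standard tableaux on $\nu/\lambda$ with content $\mu$. A nonzero coefficient requires $\nu\supset(a^k)$, so $\nu_i\ge a$ for $1\le i\le k$. Because $\nu$ has at most $k$ rows and $(a^k)$ fills the first $a$ columns of each of those $k$ rows, the skew diagram $\nu/(a^k)$ is the Young diagram of the partition $\nu':=(\nu_1-a,\ldots,\nu_k-a)$ translated $a$ columns to the right. Both the row and column conditions and the reading word are unchanged by this translation. Hence $C^\nu_{(a^k)\mu}$ equals the number of strict semi-standard tableaux of straight shape $\nu'$ with content $\mu$.

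Next I show that a straight shape $\nu'$ has such a tableau only when $\nu'=\mu$, and then the tableau is unique, namely the one whose $i$-th row consists entirely of $i$'s. The argument is by induction on rows.
\begin{itemize}
\item The first entry of the reading word is the rightmost box of row $1$. Strictness forces it to be $1$, and since rows are weakly increasing, the whole first row is $1$'s.
\item Column strictness puts every entry of row $i$ in $\{i,i+1,\ldots\}$. When row $i$ is read from right to left, its largest entry is read first.
\item Suppose rows $1,\ldots,i-1$ are constant, equal to $1,\ldots,i-1$. Reading an entry $m>i$ of row $i$ would require the count of $(m-1)$'s read so far to exceed the count of $m$'s. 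But no $(m-1)$ with $m-1\ge i$ has been read yet, so the word would fail to be strict. Hence row $i$ consists of $i$'s.
\end{itemize}
Thus row $i$ contains exactly $\nu'_i$ copies of $i$, and matching the content gives $\nu'_i=\mu_i$ for all $i$. Conversely, the filling with all $i$'s in row $i$ is clearly semi-standard and strict. This proves the claim and hence the lemma.

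The main point requiring care is the identification of $\nu/(a^k)$ with a translated straight shape. This uses $l(\nu)\le k$ essentially, and that bound is available only because of \eqref{eqn:sigmahat=0}. Without it, rows of $\nu$ below the $k$-th would make $\nu/(a^k)$ a genuine skew shape, and additional Littlewood--Richardson terms could appear. The remaining steps are routine.
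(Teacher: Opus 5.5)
Your proposal is correct and follows essentially the same route as the paper. In both, you expand via the Littlewood--Richardson rule, discard $\nu$ with $l(\nu)>k$, and use that $\nu/(a^k)$ is then a translated straight shape; you then argue row by row, using strictness of the reading word for the entries read first and column strictness for the lower bound, that row $i$ consists of exactly $\mu_i$ copies of $i$. Your version also makes explicit the dependence on $l(\nu)\le k$, which the paper uses only implicitly in its Step 2.
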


\begin{proof}
     By equation \eqref{eq: LR formula for schubert classes}, we need to compute $C_{\lambda \mu}^\nu$ for
      partitions $\nu \supset \lambda$ with $l(\nu)\leq k$ and $|\nu|=|\lambda|+|\mu|$. Let $\mathcal{T}$ be a strict semi-standard skew tableau of shape $\nu/\lambda$ with content $\mu$.

      Step 1. Prove that no box in the first row can be labelled by a number $>1$.

            If such box exists, then
    the rightmost box of the first row must be labelled by an integer $m>1$ since labels must be weakly increasing along each row. So the first term of the word for $\mathcal{T}$ contains one  label $m$ but no label $m-1$. This contradicts to the strictness of $\mathcal{T}$. Note that this argument works for all strict semi-standard skew tableau.

       Step 2. Prove that all boxes in $\mathcal{T}$ labelled by $1$ must be in the first row.

       Suppose there is a
       box labelled by 1 which is not in the first row. Since $\lambda = (a^k)$, the skew diagram
      $\nu/\lambda$ must be the Young diagram of some partition. Hence there must be a box located right above this one which is labelled by some number $\geq 1$. So the labels along this column can not be strictly increasing. This contradicts the assumption that $\mathcal{T}$ is semi-standard.

      Combining results of steps 1 and 2, we proved that the first row of $\mathcal{T}$ must contain exactly $\mu_1$ boxes which are all labelled by $1$.
      Repeating the above  arguments for each row consecutively, we see that for every $1 \leq i \leq k$, the $i$-th row of $\mathcal{T}$ must contain exactly $\mu_i$ boxes which are all labelled by $i$. Hence $\nu=(a+\mu_1, \ldots, a+\mu_k)$
      and $C_{\lambda \mu}^\nu=1$. The lemma is thus proved.
\end{proof}

Let
\begin{equation} \label{eqn:D1D2}
D_1:=\text{gcd}(k,n), \hspace{20pt} D_2:=\text{gcd}(k^2,n).
\end{equation}
Since $[pt]=\sigma_{((n-k)^k)}$, by Lemma \ref{lemma multiplying Omega of constant partition} and
Proposition \ref{prop: rearranging partition}, we have
\[ [pt]^{*\frac{n}{D_1}}
=\hat{\sigma}_{(((n-k)n/D_1)^k)}
=(-1)^{(k+1)k\frac{(n-k)}{D_1}}q^{\frac{k(n-k)}{D_1}} \,\, \sigma_{(0^k)}.\]
Note that $\sigma_{(0^k)} = \mathbf{1}$ is the identity of the cohomology ring. Hence we have
\begin{equation} \label{eqn: Omega_per is n-periodic}
    [pt]^{*\frac{n}{D_1}}=q^{\frac{k(n-k)}{D_1}} \mathbf{1}.
\end{equation}
The number $D_2$ in equation \eqref{eqn:D1D2} is a special case of $D_X$ defined by equation \eqref{eqn:DX}.
In fact, for $X=\text{Gr}(k,n)$, $\text{dim}(X)=k(n-k)$, $\text{deg}(q)=\tau=n$. So
$D_X =\text{gcd}(n, k(n-k)) =D_2$.

\begin{remark}
Lemma \ref{lemma multiplying Omega of constant partition} is similar (but has simpler form) to the result in
Proposition 6.3 in \cite{P}. Equation \eqref{eqn: Omega_per is n-periodic} also follows from Proposition 6.3 in \cite{P}.
\end{remark}

\subsection{Computing $\Delta$ for Grassmannians}\label{section computing Delta}

 In this subsection,  we give a formula expressing the handle element $\Delta$ as a linear combination of Schubert classes.

Let $R:= \lfloor {\frac{k(n-k)}{n}}\rfloor$.
For $0 \leq r \leq R$, define
\[ \mathcal{P}_{kn}^r := \{ \nu \in \mathcal{P}_{kn} \mid |\nu|=k(n-k)-rn\}. \]
Then $\mathcal{P}_{kn}^0 = \{ [pt] \}$ where $[pt]=\sigma_{((n-k)^k)}$ is the point class.
For any integer $1 \leq r \leq k$, let
\[ Z_r := \{(i_1, \ldots, i_r) \in \mathbb{Z}^r \mid 1 \leq i_1 < i_2 < \cdots < i_r \leq k\}.\]
For any $I=(i_1, \ldots, i_r)$, define $|I|:= i_1 + \cdots + i_r$.
For $1 \leq r \leq R$, $\nu \in \mathcal{P}_{kn}^r$, $I=(i_1, \ldots, i_r) \in Z_r$,
 we define a partition $\varphi(\nu, I) = (\varphi_1, \ldots, \varphi_k)$ by
\begin{equation} \label{eqn:NuItoMu}
\varphi_j = \left\{ \begin{array}{ll}
                \nu_{i_j} - i_j + j +n, & {\rm if \,\,\,}  1 \leq j \leq r, \\
                \nu_{j-r+l-1} + r-l+1, & {\rm if \,\,\,} i_{l-1}-l+2 \leq j-r \leq i_l -l, \,\,\, 1\leq l \leq r, \\
                \nu_j,  &{\rm if \,\,\,} i_r+1 \leq j \leq k,
                \end{array} \right.
\end{equation}
where $i_0:=0$. Then we have
\begin{theorem}\label{thm: formula for Delta}
    For $X=\text{Gr}(k,n)$, the handle element $\Delta$ is given by
\begin{equation}\label{eq: formula for Delta}
    \Delta= \chi(X)  [pt] + \sum_{r=1}^R q^r\sum_{\nu\in \mathcal{P}_{kn}^r}
           \sum_{I \in Z_r}(-1)^{\frac{r(2k-r+1)}{2}+|I|}  \sum_{\lambda\in\mathcal{P}_{kn}}
               C^{\varphi(\nu, I)}_{\lambda \, \, p(\lambda)} \,\, \sigma_\nu,
\end{equation}

    \noindent where $\chi(X)=\dim H^*(X)$ is the Euler  characteristic number of $X$.
\end{theorem}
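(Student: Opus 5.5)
We start from equation \eqref{eqn: form of Delta}, which gives $\Delta=\sum_{\lambda\in\mathcal{P}_{kn}}\sigma_\lambda*\sigma_{p(\lambda)}$. Lemma~\ref{lem:QGiabelli} lets us replace each $\sigma_\lambda$ by $\hat{\sigma}_\lambda$. Then \eqref{eq: LR formula for schubert classes} gives
\[
\Delta=\sum_{\lambda\in\mathcal{P}_{kn}}\sum_{\varphi} C^{\varphi}_{\lambda\,p(\lambda)}\,\hat{\sigma}_\varphi ,
\]
where $\varphi$ ranges over partitions with $l(\varphi)\le k$ and $|\varphi|=k(n-k)$. Every such $\varphi$ contains $\lambda$, so $\varphi_1\le (n-k)+(n-k)<2n$, and in fact $\varphi_j\le \lambda_j+(n-k)$.

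The classical part is treated first. When $\varphi\in\mathcal{P}_{kn}$, the degree forces $\varphi=((n-k)^k)$, i.e. $\hat\sigma_\varphi=[pt]$. Since $C^{((n-k)^k)}_{\lambda\,p(\lambda)}=1$ by Poincar\'e duality, this part contributes $\sum_\lambda 1\cdot[pt]=\chi(X)[pt]$.

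For the remaining $\varphi$ with $\varphi_1\ge n-k+1$, we straighten $\hat\sigma_\varphi$ using Proposition~\ref{prop: rearranging partition} together with the row-exchange rules \eqref{eq: rearranging rows}. We reduce each index $\varphi_j$ (or rather $\varphi_j-j$) modulo $n$ into the window $[j-k,\,j-k+n)$. Because $\varphi_j-j+k<2n$, each index is reduced by $n$ at most once. The set $J$ of rows that get reduced, of size $r$, produces the factor $(-1)^{r(k+1)}q^r$. After reduction the sequence has to be sorted into a partition $\nu\in\mathcal{P}_{kn}^r$ with the exchange rule \eqref{eq: rearranging rows}; this yields either $0$ or $\pm\sigma_\nu$.

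The cleanest route is to run this correspondence backwards, and this is where \eqref{eqn:NuItoMu} comes from. Fix $\nu\in\mathcal{P}_{kn}^r$ and a choice $I=(i_1<\dots<i_r)$ of the rows of $\nu$ that "came from" reduced rows. We must then show three things:
\begin{itemize}
\item the unique $\varphi$ that straightens to $\pm\sigma_\nu$ with reduced rows landing in positions $I$ is $\varphi(\nu,I)$;
\item each nonvanishing $\hat\sigma_\varphi$ arises from exactly one pair $(\nu,I)$;
\item the accumulated sign is $(-1)^{r(k+1)}$ times the sign of moving the $r$ reduced rows to the bottom positions $i_1,\dots,i_r$.
\end{itemize}
In the definition of $\varphi(\nu,I)$, the top $r$ rows of $\varphi$ are the reduced ones, shifted by $+n$. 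Moving row $j$ of $\varphi$ to position $i_j$ via \eqref{eq: rearranging rows} takes $i_j-j$ adjacent swaps, each shifting the entries it passes by $\pm1$. This accounts for the $+1$ shifts and index offsets in the middle case of \eqref{eqn:NuItoMu}, and for a sign $(-1)^{\sum_j(i_j-j)}=(-1)^{|I|-r(r+1)/2}$. Combining with $(-1)^{r(k+1)}$ gives
\[
(-1)^{rk+r-r(r+1)/2+|I|}=(-1)^{r(2k-r+1)/2+|I|},
\]
which matches the claimed sign.

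We expect the main obstacle to be the bijectivity and bookkeeping step: proving that every $\varphi$ with $C^{\varphi}_{\lambda\,p(\lambda)}\neq0$ whose straightening is nonzero is of the form $\varphi(\nu,I)$ for a unique pair. This requires care with the cases where the straightening vanishes (repeated entries, or entries below $j-k$), and with checking that the $\varphi(\nu,I)$ so defined is indeed a partition. When $\varphi(\nu,I)$ is not a partition, we will argue that the corresponding LR coefficient is $0$, so including it in the sum is harmless. Finally, the bound $r\le R$ follows because $|\nu|=k(n-k)-rn\ge0$. Summing over $\nu$, $I$ and $\lambda$ then gives \eqref{eq: formula for Delta}.
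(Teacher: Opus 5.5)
Your outline follows the paper's proof step by step. You expand $\Delta$ by the LR rule into $\hat\sigma_\varphi$'s and isolate the classical term $\chi(X)[pt]$. You reduce each row at most once via Proposition~\ref{prop: rearranging partition}, sort with \eqref{eq: rearranging rows}, and invert the process through $\varphi(\nu,I)$, with the same sign count $(-1)^{r(k+1)}(-1)^{|I|-r(r+1)/2}$. Three small repairs are needed.

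First, the bound $\varphi_1\le 2(n-k)$ does not follow from $\varphi\supset\lambda$, which only gives a lower bound. It comes from strictness of the LR tableau. Every box in the first row of $\varphi/\lambda$ is labelled $1$ (Step 1 in the proof of Lemma~\ref{lemma multiplying Omega of constant partition}), so $\varphi_1-\lambda_1\le p(\lambda)_1\le n-k$. That is all you need for ``reduced at most once.''

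Second, the step you flag as the main obstacle closes in a few lines, as in the paper:
\begin{itemize}
\item Row $j$ is reduced if and only if $\varphi_j-j\ge n-k$.
\item Since $\varphi_j-j$ is strictly decreasing in $j$, the reduced rows are exactly rows $1,\dots,r$.
\item Within each of the two groups (reduced and unreduced) the values $J_j-j$ remain strictly decreasing. So the straightening by \eqref{eq: rearranging rows} is forced to preserve the relative order inside each group.
\item This determines $\nu$ and $I$ uniquely from $\varphi$. Hence $F$ is well defined and $\varphi(F(\varphi))=\varphi$.
\end{itemize}

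Third, $\varphi(\nu,I)$ is always a partition, with $|\varphi(\nu,I)|=k(n-k)$, first part at most $2n-k$, and $\varphi(\nu,I)\notin\mathcal{P}_{kn}$. So your planned argument for the non-partition case is unnecessary. When its first part exceeds $2(n-k)$, the LR coefficient simply vanishes.
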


\begin{proof}

By equation \eqref{eqn: form of Delta}, we have

\begin{equation}\label{eq:original expression for delta}
    \Delta=\sum_{\lambda\in \mathcal{P}_{kn}} \sigma_\lambda*\sigma_{p(\lambda)}
          =\sum_{\lambda\in \mathcal{P}_{kn}}\sum_{\mu \in A}  C^\mu_{\lambda p(\lambda)}\hat{\sigma}_\mu,
\end{equation}

\noindent where $A$ is the set of all partitions $\mu$ with
 $\vert\mu\vert=\vert\lambda\vert+\vert p(\lambda)\vert=k(n-k)$
 and $l(\mu) \leq k$.

When expressing $\Delta$ as a linear combination of Schubert classes $\sigma_{\nu}$ with $\nu \in \mathcal{P}_{kn}$,
the coefficient of $q^0$ is given by
$\sum_{\lambda\in \mathcal{P}_{kn}} \sigma_\lambda \cdot \sigma_{p(\lambda)} = \chi(X) [pt]$.
In the rest part of the proof, we only need to consider coefficient of $q^r$ for $r \geq 1$.

If $C^\mu_{\lambda p(\lambda)} \neq 0$, then $\mu \supset \lambda$ and there exists a strict semi-standard skew tableau $\mathcal{T}$ of shape $\mu/\lambda$ with content $p(\lambda)$. By the result in Step 1 in the proof of
Lemma \ref{lemma multiplying Omega of constant partition},
all boxes in the first row of $\mathcal{T}$ must be labeled by 1. This implies that
the number of boxes in the first row of $\mathcal{T}$ is less than or equal to $p(\lambda)_1$ which is the first part of
the $p(\lambda)$.
Hence $ \mu_1 \leq \lambda_1+p(\lambda)_1 \leq 2(n-k) < 2n-k$. So
\begin{equation}\label{eq:DeltaB}
    \Delta= \chi(X) [pt] + \sum_{\lambda\in \mathcal{P}_{kn}}\sum_{\mu \in B}  C^\mu_{\lambda p(\lambda)}\hat{\sigma}_\mu,
\end{equation}
where $B$ is the set of all partitions $\mu=(\mu_1, \ldots, \mu_k)$ satisfying properties:
\[ \mu_1 \leq 2n-k, \hspace{10pt}
 \vert\mu\vert=k(n-k), \hspace{10pt}
 \mu \notin \mathcal{P}_{kn}, \hspace{10pt}
 \hat{\sigma}_\mu \neq 0.\]

For any partition $\mu=(\mu_1, \ldots, \mu_k) \in B$, we can find integers $J=(J_1, \ldots, J_k)$ such that
\[ J_i \leq \mu_i, \,\,\, J_i \equiv \mu_i ({\rm mod \,\,\,} n), \,\,\, i-k \leq J_i < i-k+n \]
 for all $1 \leq i \leq k$. Let $r= (|\mu|-|J|)/n \leq R$. Then
by Proposition \ref{prop: rearranging partition}, we have
\begin{equation} \label{eqn:mutoJ}
 \hat{\sigma}_\mu = (-1)^{r(k+1)} q^r \hat{\sigma}_J.
\end{equation}
If $r=0$, then $\mu = J \in \mathcal{P}_{kn}$ and $\hat{\sigma}_\mu = \sigma_\mu$ is a Schubert class.
So we can assume $r>0$.

Since $\mu_i \leq \mu_1 \leq 2n-k$, we have $0 \leq \mu_i - J_i \leq 2n -i < 2n$
 for all $1\leq i \leq k$. Since $J_i \equiv \mu_i ({\rm mod \,\,\,} n)$,
  $\mu_i - J_i$ must be either $0$ or $n$ for all $i$.
 Moreover, $\mu_i - J_i = n$ if and only if $\mu_i \geq i-k+n$.
 Since $\mu$ is a partition,  $\mu_i - i$ is decreasing as $i$ increases.
 Therefore if there is an $i$ such that $\mu_i \geq i-k+n$, then
 for all $1 \leq j \leq i$,
  $\mu_j \geq j-k+n$, and consequently $\mu_j - J_j = n$.
 Since $|\mu|-|J|=rn$, we must have
 \[\mu_i - J_i = \left\{ \begin{array}{ll} n, & {\rm \,\,\, if \,\,\,} 1 \leq i \leq r, \\
                                            0, & {\rm \,\,\, if \,\,\,} r+1 \leq i \leq k.
                        \end{array} \right. \]
  We now use  equation \eqref{eq:  rearranging rows} to rearrange the indices of $\hat{\sigma}_J$
  to get a partition $\nu \in \mathcal{P}_{kn}$ such that  $\hat{\sigma}_J = \epsilon \sigma_\nu$ with
  $\epsilon=0, \pm 1$.
  Since $\{ J_1, \ldots, J_r\}$ and $\{J_{r+1}, \ldots, J_k\}$ are both weakly decreasing, we can keep the relative order
  of the  corresponding indices  within each of these two sets during this process.
  There are integers $I=(i_1, \ldots, i_r) \in Z_r$ such that
   $\nu_{i_j}$ is obtained from $J_j$ in this process for $j=1, \ldots, r$.
   In this process,  the number of times we need to use equation~\eqref{eq:  rearranging rows}
   is $\sum_{j=1}^r (i_j-j)= |I|- r(r+1)/2$. Hence we have
   $\hat{\sigma}_J = (-1)^{|I|-r(r+1)/2} \sigma_\nu$. Combining with equation \eqref{eqn:mutoJ},   we have
\begin{equation}\label{eq: relation between mu and nu}
    \hat{\sigma}_\mu=(-1)^{|I| +\frac{r(2k-r+1)}{2}} q^r \sigma_\nu.
\end{equation}
In the above process, both $\nu$ and $I$ are uniquely determined by $\mu$.
Note that $r=(|\mu|-|\nu|)/n$ is also uniquely determined by $\mu$.
Hence we obtain a map
\begin{equation} \label{eqn:FAB}
    \begin{array}{rccl}
        F: & B & \longrightarrow & \bigcup_{r=1}^{R} \mathcal{P}_{kn}^r \times Z_r \\
           & \mu & \longmapsto & (\nu, I).
    \end{array}
\end{equation}
On the other hand, given any $1 \leq r \leq R$, $\nu \in \mathcal{P}_{kn}^r$, $I=(i_1, \ldots, i_r) \in Z_r$,
we can obtain a partition $\varphi(\nu, I)$
by first pushing the $i_1, \ldots, i_r$-th parts of $\nu$ to the beginning of the partition
using the rules for switching indices as in equation \eqref{eq:  rearranging rows},
  and then adding $n$ to the first $r$ parts. The formula for $\varphi(\nu, I)$ is given
  by equation \eqref{eqn:NuItoMu}.
It is straightforward to check $\varphi(\nu, I) \in B$ and
$F(\varphi(\nu, I)) = (\nu, I)$.
Hence $F$ is a bijection with inverse given by $\varphi$.
The theorem then follows from equations \eqref{eq:DeltaB}
and \eqref{eq: relation between mu and nu}.
\end{proof}

We now give a more explicit formula for $\Delta$ when $X=\text{Gr}(2,n)$.
We first compute the Littlewood-Richardson coefficients for this case.

\begin{lemma} \label{lem:LRk=2}
For any partitions $\lambda=(\lambda_1, \lambda_2)$, $\mu=(\mu_1, \mu_2)$, and $\nu=(\nu_1, \nu_2)$,
if
\begin{equation} \label{eqn:CondC=1}
\mu \supset \lambda, \hspace{10pt}
|\mu| = |\lambda| + |\nu|, \hspace{10pt}
\nu_1 \geq \mu_1 - \lambda_1, \hspace{10pt}
\mu_2 - \lambda_1 \leq \nu_2 \leq \mu_1 - \lambda_1,
\end{equation}
then $C_{\lambda \nu}^\mu=1$ .
Otherwise, $C_{\lambda \nu}^\mu=0$.
\end{lemma}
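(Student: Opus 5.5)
We prove the lemma by directly counting strict semi-standard skew tableaux of shape $\mu/\lambda$ with content $\nu$, as in the definition of $C^\mu_{\lambda\nu}$ recalled above. Since $\nu=(\nu_1,\nu_2)$ has at most two parts, every label is $1$ or $2$. We first dispose of the necessary conditions. A tableau can exist only if $\mu\supset\lambda$ and $|\mu|=|\lambda|+|\nu|$. Moreover, $\mu$ must have at most two rows for the tableau to be nonzero. Indeed, a column of the skew diagram can contain at most two boxes, since labels strictly increase down columns and only $1,2$ are available. For the $\mathrm{Gr}(2,n)$ application all partitions have length $\leq 2$, and we will read the statement in that setting.

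Next we pin down the structure of a tableau. By Step 1 in the proof of Lemma \ref{lemma multiplying Omega of constant partition}, every box in the first row of $\mu/\lambda$ is labelled $1$. The first row has $\mu_1-\lambda_1$ boxes, which forces $\nu_1\geq \mu_1-\lambda_1$. The second row of $\mu/\lambda$ occupies columns $\lambda_2+1,\ldots,\mu_2$ and is weakly increasing, so it consists of some $1$'s followed by some $2$'s. Its number of $2$'s must equal $\nu_2$, and its number of $1$'s is then $\nu_1-(\mu_1-\lambda_1)$. Hence the content determines the filling completely, so $C^\mu_{\lambda\nu}\in\{0,1\}$. It remains to find exactly when this unique candidate filling is valid.

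We check the remaining two requirements on the candidate.

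\emph{Column strictness.} A second-row box in column $c$ lies below a first-row box of the skew diagram exactly when $c>\lambda_1$. Such a box must be labelled $2$, so every second-row box in columns $\lambda_1+1,\ldots,\mu_2$ is labelled $2$. Because the $2$'s sit at the right end of the row, this is equivalent to $\nu_2\geq \mu_2-\lambda_1$. If $\mu_2\leq\lambda_1$ the condition is vacuous, consistent with $\nu_2\geq 0$.

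\emph{Strictness of the word.} The word is read right to left, first row then second row. It begins with $\mu_1-\lambda_1$ ones, followed in the second row by the $\nu_2$ twos and then the remaining ones. The lattice condition is tightest just after all the $2$'s have been read. At that point it requires $\mu_1-\lambda_1\geq \nu_2$.

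Combining these with $\nu_1\geq\mu_1-\lambda_1$, $\mu\supset\lambda$ and the size condition gives exactly the conditions \eqref{eqn:CondC=1}. Conversely, under \eqref{eqn:CondC=1} the candidate filling is a well-defined semi-standard strict tableau. The count of $1$'s in the second row is nonnegative, and the total number of boxes matches because $|\mu|=|\lambda|+|\nu|$, so $C^\mu_{\lambda\nu}=1$. The main point needing care is the column-strictness step: one must check precisely which second-row boxes have a skew box above them. The rest is bookkeeping.
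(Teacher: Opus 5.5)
Your proof is correct and is essentially the paper's argument: the first row is forced to be all $1$'s (giving $\nu_1\geq\mu_1-\lambda_1$), the second row is forced to be $1$'s followed by $\nu_2$ twos (so $C^\mu_{\lambda\nu}\in\{0,1\}$), column strictness gives $\nu_2\geq\mu_2-\lambda_1$, the lattice-word condition gives $\nu_2\leq\mu_1-\lambda_1$, and the converse is by direct construction. The only blemish is your aside that $\mu$ must have at most two rows, which is false in general (e.g.\ $C^{(1,1,1)}_{(1,1),(1)}=1$); it does no harm because the lemma already takes $\mu=(\mu_1,\mu_2)$.
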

\begin{proof}
Suppose there exists  a strict semi-standard skew tableau $\mathcal{T}$ of shape $\mu/\lambda$ and with content $\nu$.
The existence of $\mathcal{T}$ requires $\mu \supset \lambda$ and $|\mu| = |\lambda| + |\nu|$.
Step 1 in the proof of Lemma \ref{lemma multiplying Omega of constant partition} shows that all boxes in the
first row must be labelled by 1. This implies that $\nu_1 \geq \mu_1-\lambda_1$.
Since $\mathcal{T}$ is semi-standard, all boxes labelled by 2 must be on the second row and must be on the right of
boxes labelled by 1 on the same row. Hence $\mathcal{T}$ is unique if it exists. This implies that
$C_{\lambda \nu}^\mu=1$ if $\mathcal{T}$ exists, and $C_{\lambda \nu}^\mu=0$ otherwise.
Since labels along each column are strictly increasing, each box labelled by 1 must be on the top of its column in
$\mathcal{T}$. Hence the number of boxes labelled by 2 is bigger than or equal to $\mu_2-\lambda_1$. By strictness of
$\mathcal{T}$, number of all boxes labelled by 2 must be less than or equal to the number of boxes in the first row
which are labelled by 1. Hence we have $\mu_2 - \lambda_1 \leq \nu_2 \leq \mu_1 - \lambda_1$.
This shows that conditions \eqref{eqn:CondC=1} must be satisfied if $\mathcal{T}$ exists. On the other hand, if these
conditions are satisfied, we can construct $\mathcal{T}$ in a unique way. The lemma is thus proved.
\end{proof}

The following result is a consequence of Theorem \ref{thm:  formula for Delta}.
\begin{corollary} \label{cor: formula of Delta for Gr(2,n)}
    For $X=\text{Gr}(2,n)$ with $n\geq4$, we have

    \[
    \Delta=\frac{n(n-1)}{2}\sigma_{(n-2,n-2)}+\sum^{\lfloor\frac{n-2}{2}\rfloor}_{s=1}\frac{n(n-2s-1)}{2} q\sigma_{(n-3-s,s-1)}.
    \]
\end{corollary}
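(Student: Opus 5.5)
We specialize Theorem~\ref{thm: formula for Delta} to $k=2$. The leading term is $\chi(X)[pt] = \binom{n}{2}\sigma_{(n-2,n-2)} = \frac{n(n-1)}{2}\sigma_{(n-2,n-2)}$. Since $\dim(X)=2(n-2)$ and $\tau=n$, we get $R=\lfloor 2(n-2)/n\rfloor = 1$ for $n\geq 4$, so only $r=1$ contributes.

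The relevant index sets are as follows. The set $\mathcal{P}_{2n}^1$ consists of partitions $\nu=(\nu_1,\nu_2)$ with $|\nu|=n-4$ and $n-2\geq\nu_1\geq\nu_2\geq 0$. We parametrize them as $\nu=(n-3-s,s-1)$ with $1\leq s\leq \lfloor (n-2)/2\rfloor$. The set $Z_1$ is $\{(1),(2)\}$.

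For $r=1$ the sign in Theorem~\ref{thm: formula for Delta} is $(-1)^{\frac{1\cdot 4}{2}+i_1}=(-1)^{i_1}$. From \eqref{eqn:NuItoMu}:
\[\varphi(\nu,(1))=(\nu_1+n,\nu_2), \hspace{20pt} \varphi(\nu,(2))=(\nu_2-1+n,\nu_1+1).\]
For $\nu=(n-3-s,s-1)$ these are
\[\mu^{(1)}=(2n-3-s,\,s-1), \hspace{20pt} \mu^{(2)}=(n+s-2,\,n-2-s).\]
So the coefficient of $q\sigma_\nu$ is
\[\sum_{\lambda\in\mathcal{P}_{2n}} C^{\mu^{(2)}}_{\lambda\,p(\lambda)} \;-\; \sum_{\lambda} C^{\mu^{(1)}}_{\lambda\,p(\lambda)}.\]

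Next we count these coefficients with Lemma~\ref{lem:LRk=2}. Write $\lambda=(\lambda_1,\lambda_2)$ with $0\leq\lambda_2\leq\lambda_1\leq n-2$. Then
\[p(\lambda)=(n-2-\lambda_2,\,n-2-\lambda_1).\]
For each target $\mu$, the conditions \eqref{eqn:CondC=1} become:
\begin{itemize}[label={}]
\item the containment $\lambda\subset\mu$;
\item $\mu_1-\lambda_1\leq n-2-\lambda_2$;
\item $\mu_2-\lambda_1\leq n-2-\lambda_1\leq \mu_1-\lambda_1$.
\end{itemize}
The size condition is automatic. Each $\lambda$ satisfying these contributes $1$, so we count lattice points in a polygon.

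For $\mu^{(1)}$: the condition $\mu_1-\lambda_1\leq n-2-\lambda_2$ gives $\lambda_1-\lambda_2\geq n-1-s$. Containment gives $\lambda_2\leq s-1$, and the upper middle inequality gives $\mu_1\geq n-2$, which holds automatically. So the count is the number of pairs with
\[0\leq\lambda_2\leq s-1, \hspace{20pt} \lambda_2+n-1-s\leq\lambda_1\leq n-2.\]
This equals $s\cdot s - \binom{s}{2}$ after accounting for the triangle; I will compute this carefully.

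For $\mu^{(2)}$: the condition $\mu_1-\lambda_1\leq n-2-\lambda_2$ gives $\lambda_1-\lambda_2\geq s$. We also need $\mu_2=n-2-s\leq n-2$, which holds, and $\lambda_2\leq n-2-s$. This is a larger triangle.

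The difference of the two counts should simplify to $\frac{n(n-2s-1)}{2}$. As a sanity check, each triangle has size of the form $\binom{m+1}{2}$, and their difference is linear in $n$ for fixed $s$ in the expected way.

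The main obstacle is this lattice-point bookkeeping: pinning down exactly which inequalities are active, including the containment $\lambda_2\leq\mu_2$ and the edge cases $s=(n-2)/2$ when $n$ is even and $s=1$. Everything else is a direct substitution into Theorem~\ref{thm: formula for Delta}. As a check, I would first verify the result for $n=4,5$ against Proposition~\ref{prop:quadrics}, since $\mathrm{Gr}(2,4)=\mathbb{Q}^4$. There \eqref{eqn:DeltaQ2m} gives $\Delta=6\sigma_4+2q\mathbf{1}$, which matches $\frac{4\cdot 3}{2}=6$ and $\frac{4\cdot 1}{2}=2$ with $\sigma_{(0,0)}=\mathbf{1}$.
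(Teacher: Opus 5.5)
Your proposal is correct and is essentially the paper's own proof: you specialize Theorem~\ref{thm: formula for Delta} to $k=2$ with $R=1$, use $\varphi(\nu(s),1)=(2n-3-s,s-1)$ and $\varphi(\nu(s),2)=(n+s-2,n-2-s)$ with signs $-1$ and $+1$, and count the admissible $\lambda$ via Lemma~\ref{lem:LRk=2}, with all the inequalities set up correctly. The arithmetic you deferred closes as expected. Your first count $s^2-\binom{s}{2}$ equals $\frac{s(s+1)}{2}$. The second region gives $\sum_{\lambda_2=0}^{n-2-s}(n-1-s-\lambda_2)=\frac{(n-s)(n-s-1)}{2}$. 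Their difference is $\frac{n(n-2s-1)}{2}$, which is linear in $s$ rather than in $n$, and no special treatment is needed at $s=1$ or $s=\frac{n-2}{2}$.
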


\begin{proof}
In this case $\chi(X)$ is the number of partitions in $\mathcal{P}_{2n}$, which is equal to $\frac{n(n-1)}{2}$.
Moreover $R=\lfloor {\frac{2(n-2)}{n}}\rfloor=1$ for $n\geq 4$. By equation \eqref{eq:  formula for Delta}, we only need to
compute coefficient of $q$ in $\Delta$, i.e. the case $r=1$.
For $\nu\in \mathcal{P}_{2n}^1$, we have $\vert\nu\vert=n-4$.
So $\nu= \nu(s) = (n-3-s, s-1)$ for $1\leq s\leq \lfloor\frac{n-2}{2}\rfloor$.
$I$ is now a single integer $i$ which is either 1 or 2. Moreover
\[ \varphi(\nu(s), 1)=(2n-3-s, s-1), \hspace{20pt} \varphi(\nu(s), 2)=(n+s-2,n-s-2).\]
Let $N_i(s)$ be the number of $\lambda \in \mathcal{P}_{2n}$ such that $C_{\lambda \, p(\lambda)}^{\varphi(\nu(s), i)} = 1$.
By Theorem \ref{thm:  formula for Delta},
\begin{equation} \label{eqn:DeltaN1N2}
\Delta = \frac{n(n-1)}{2} \sigma_{(n-2, n-2)}
         + \sum_{s=1}^{\lfloor\frac{n-2}{2}\rfloor} (N_2(s) - N_1(s)) q  \sigma_{(n-3-s, s-1)}.
\end{equation}

We first compute $N_1(s)$. By Lemma \ref{lem:LRk=2}, $C_{\lambda \, p(\lambda)}^{\varphi(\nu(s), 1)} = 1$ if and only if
$\lambda_1 \leq n-2$, $\lambda_2 \leq s-1$, and $\lambda_1  \geq  \lambda_2 + n-1-s$. Hence
\[ N_1(s) \,\,=\,\, \sum_{\lambda_2 = 0}^{s-1} \,\,\, \sum_{\lambda_1 = \lambda_2 + n-1-s}^{n-2} 1
    \,\,=\,\, \frac{s(s+1)}{2}.\]

We now compute $N_2(s)$. By Lemma \ref{lem:LRk=2}, $C_{\lambda \, p(\lambda)}^{\varphi(\nu(s), 2)} = 1$ if and only if
$\lambda_1 \leq n-2$, $\lambda_2 \leq n-s-2$, and  $\lambda_1  \geq  \lambda_2 + s$. Hence
\[ N_2(s) \,\,=\,\, \sum_{\lambda_2 = 0}^{n-s-2} \,\,\, \sum_{\lambda_1 = \lambda_2 + s}^{n-2} 1
    \,\,=\,\, \frac{(n-s)(n-s-1)}{2}.\]
Therefore $N_2(s)-N_1(s) = \frac{n(n-2s-1)}{2}$ and the corollary follows from equation \eqref{eqn:DeltaN1N2}.
\end{proof}

\subsection{Estimate the dimension of $\mathfrak{F}$ for Grassmannians}\label{sec: characterization of Delta^i}

In this subsection, we give an estimate for the dimension of $\mathfrak{F}$ which is defined by equation \eqref{eqn:F}.
Recall that for $X=\text{Gr}(k,n)$, $\text{dim}(X)=k(n-k)$ and $\text{deg}(q)=\tau=n$.
After setting $q=1$ in Theorem \ref{thm: subspace spanned by Delta^i} and applying it to $X=Gr(k,n)$, we have
\begin{equation} \label{eqn:SpaceContainFGr}
 \mathfrak{F} \subset \bigoplus_{\substack{j \equiv 0\;(mod\;D_2) \\ 0 \leq j \leq n-1}}   V_j,
\end{equation}
where $D_2 := {\rm gcd}(n, k^2)$ and
$V_j := {\rm Span}_{\mathbb{C}} \{ \sigma_\lambda
     \mid \lambda \in \mathcal{P}_{kn}, |\lambda| \equiv j ({\rm mod} \,\,\, n)\}$.

Let $p(i \mid m, l)$ be the number of partitions $\lambda$  such that $|\lambda|=i$ and the Young diagram of $\lambda$
is contained in an $l \times m$ rectangle.
Then \[ \dim(H^{2i}(X))=p(i \mid n-k, k). \]
Hence in this case, Theorem \ref{thm:DimBoundX} can be written as
\begin{theorem} \label{thm: estimation of dimension of Span(Delta^i)}
    For $X=Gr(k,n)$,
    \begin{equation} \label{eqn:UBDimF}
     \dim(\mathfrak{F})  \leq  \frac{n}{D_2}\cdot\sum_{i=0}^{\lfloor\frac{k(n-k)}{n}\rfloor} p(in \mid n-k,k).
    \end{equation}
\end{theorem}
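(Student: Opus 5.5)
The plan is to specialize Theorem \ref{thm:DimBoundX} to $X=\mathrm{Gr}(k,n)$ and then pass from the rank of $\mathfrak{F}_q$ over $\mathbb{C}[q,q^{-1}]$ to the dimension of $\mathfrak{F}$ at $q=1$. First we check the hypotheses. We have $H_2(X,\mathbb{Z})\cong\mathbb{Z}$ and $H^{\rm odd}(X)=0$. Moreover $[pt]$ is invertible in $QH^*(X)_q$, either by Theorem \ref{thm mutiply class of pt} or directly from equation \eqref{eqn: Omega_per is n-periodic}, which gives $[pt]^{*\frac{n}{D_1}}=q^{\frac{k(n-k)}{D_1}}\mathbf{1}$. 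We also have $\tau=n$, $\dim(X)=k(n-k)$, and $D_X=\gcd(n,k(n-k))=\gcd(n,k^2)=D_2$.

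Rather than going through ranks, I would argue directly from the containment \eqref{eqn:SpaceContainFGr}. This is obtained from Theorem \ref{thm: subspace spanned by Delta^i} by setting $q=1$. The specialization is legitimate because $\Delta^{*k}\in QH^*(X)$ is polynomial in $q$, and the decomposition $\Delta^{*k}\in V_j\otimes\mathbb{C}[q]$ survives evaluation at $q=1$. It gives
\[
\dim\mathfrak{F}\le \sum_{j\equiv 0\ (\mathrm{mod}\ D_2),\ 0\le j\le n-1}\dim V_j .
\]
There are exactly $n/D_2$ indices $j$ in this sum.

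Next I would show that every such $V_j$ has dimension $\dim V_0$. Lemma \ref{lem: same of cardinality} with $\Phi=[pt]$, which is homogeneous of degree $\dim(X)$ and invertible, gives an isomorphism $V_j\otimes\mathbb{C}[q,q^{-1}]\cong V_0\otimes\mathbb{C}[q,q^{-1}]$ of free modules whenever $j\equiv 0\pmod{D_2}$. Comparing ranks yields $\dim V_j=\dim V_0$.

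Finally, $V_0=\bigoplus_i H^{2in}(X)$ for $0\le i\le\lfloor k(n-k)/n\rfloor$, and $\dim H^{2in}(X)=p(in\mid n-k,k)$ because Schubert classes indexed by $\mathcal{P}_{kn}$ form a basis. Combining these three steps gives \eqref{eqn:UBDimF}.

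The only step needing care is the specialization $q=1$. The point is that $\mathfrak{F}$ is the $\mathbb{C}$-span of the specializations of the $\Delta^{*k}$, and each of these lies in the corresponding $V_j$, so the bound holds without any flatness argument.
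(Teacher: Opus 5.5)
Your proposal is correct and follows essentially the paper's route: specialize the containment of Theorem \ref{thm: subspace spanned by Delta^i} at $q=1$ and count the $n/D_2$ summands. Then use Lemma \ref{lem: same of cardinality} (with $\Phi=[pt]$ and $D_X=\gcd(n,k(n-k))=D_2$) to get $\dim V_j=\dim V_0=\sum_i p(in\mid n-k,k)$, which is exactly how the paper reads off \eqref{eqn:UBDimF} as the Grassmannian case of Theorem \ref{thm:DimBoundX}.
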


The generating function for numbers $p(i \mid m, l)$ is the Gaussian binomial coefficients
$\left[ \begin{array}{c} m+l \\ m \end{array} \right]_q$ whose value at $q=1$ is $\binom{m+l}{m}$,
which is also the number of partitions whose Young diagram is contained in the $m \times l$ rectangle
(cf. Proposition 1.1 in \cite{Ai}). A standard property of Gaussian binomial coefficients (i.e. equation (7) on page 39 in
\cite{Ai}) implies the following recursion formula for $p(i \mid m, l)$:
\begin{equation} \label{eq: inductively compute number of partitions}
    p(i \mid m,l)= p(i \mid m-1, l) + p(i-m \mid m, l-1)
\end{equation}
\noindent with $p(0 \mid m,l)=1$ for all $m, l \geq0$, $p(i \mid m,l)=0$ for $i>0$ and at least one of $m,l$ equal to 0.

To get an asymptotic estimate for the right hand side of the inequality \eqref{eqn:UBDimF},
we first prove the following lemma.

\begin{lemma} \label{lem:AsymptBound}
For any fixed $k \geq 0$,
\begin{equation} \label{eqn:AsymptBound}
 \lim_{n \rightarrow \infty} \frac{\sum_{i=0}^{\lfloor\frac{k(n-k)}{n}\rfloor} p(in \mid n-k,k)}{n^{(k-1)}} = \frac{1}{k!}.
\end{equation}
\end{lemma}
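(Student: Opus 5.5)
The plan is to turn the sum in \eqref{eqn:AsymptBound} into a count of $k$-element subsets of $\mathbb{Z}/n$ with a prescribed sum modulo $n$, and then count ordered tuples, where the sum condition removes exactly one free coordinate. I will assume $k\geq 1$; for $k=0$ the sum equals $1$ and the normalization $n^{-1}$ degenerates, so the statement is meant for $k\geq 1$. The case $k=1$ is immediate: the only term is $p(0\mid n-1,1)=1$.

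\textbf{Step 1.} For $n> k$, every $\lambda\in\mathcal{P}_{kn}$ satisfies $0\le |\lambda|\le k(n-k)$. Hence the left-hand numerator equals the number $N_{k,n}$ of $\lambda\in\mathcal{P}_{kn}$ with $|\lambda|\equiv 0 \pmod n$. The upper limit $\lfloor k(n-k)/n\rfloor$ is exactly the largest $i$ with $in\le k(n-k)$, so no term is lost or added.

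\textbf{Step 2.} I will use the standard bijection between $\mathcal{P}_{kn}$ and $k$-subsets of $\{1,\ldots,n\}$, given by
\[
\lambda\mapsto S(\lambda)=\{\lambda_j+k+1-j \mid 1\le j\le k\}.
\]
This map is injective and lands in $\{1,\ldots,n\}$ because $\lambda_1+k\le n$ and $\lambda_k+1\geq 1$. It is a bijection since both sets have $\binom{n}{k}$ elements. Under this map the sum of the elements of $S(\lambda)$ is $|\lambda|+k(k+1)/2$. Therefore $N_{k,n}$ is the number of $k$-subsets of $\mathbb{Z}/n$ whose element sum is congruent to $c:=k(k+1)/2$ modulo $n$.

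\textbf{Step 3.} I count ordered $k$-tuples $(x_1,\ldots,x_k)$ of pairwise distinct elements of $\mathbb{Z}/n$ with $\sum x_j\equiv c$. There are exactly $k!\,N_{k,n}$ such tuples.

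For the upper bound, choose $x_1,\ldots,x_{k-1}$ distinct, in $n(n-1)\cdots(n-k+2)$ ways. Then $x_k$ is forced to be $c-\sum_{j<k}x_j$. This gives $k!\,N_{k,n}\le n^{k-1}$.

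For the lower bound, I subtract the forced choices in which $x_k$ coincides with some $x_j$, $j<k$. For each fixed $j$, the condition $x_k=x_j$ becomes the linear congruence $2x_j+\sum_{i\neq j,\,i<k}x_i\equiv c$. This congruence has at most $2n^{k-2}$ solutions (at most $\gcd(2,n)\le 2$ choices of $x_j$ for each choice of the others). So the number of bad tuples is $O(k\,n^{k-2})$, and
\[
k!\,N_{k,n}= n^{k-1}+O(n^{k-2}).
\]
Dividing by $k!\,n^{k-1}$ and letting $n\to\infty$ gives the limit $1/k!$ in \eqref{eqn:AsymptBound}.

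\textbf{Main obstacle.} The only delicate point is the bookkeeping in Steps 1--2. One must check that the range of $i$ in the sum matches exactly the set of multiples of $n$ in $[0,k(n-k)]$, and that the shift by $k(k+1)/2$ changes only the target residue $c$, not the count. Once the problem is phrased through distinct tuples in $\mathbb{Z}/n$, the asymptotics in Step 3 are routine. The collision correction is lower order because $k$ is fixed while $n\to\infty$.
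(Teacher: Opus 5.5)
Your proof is correct, and it takes a genuinely different route from the paper. The paper quotes the Stanley--Zanello asymptotic $p(in+r\mid k,n-k)=\frac{1}{k!(k-1)!}A_{k-1,i}(n-k)^{k-1}+O((n-k)^{k-2})$ for each fixed $i$. It then notes that for $n>k^2$ the sum has exactly the $k$ terms $i=0,\ldots,k-1$, and finishes with the Eulerian-number identity $\sum_{i=0}^{k-1}A_{k-1,i}=(k-1)!$. You never estimate the individual graded pieces. Via $\lambda\mapsto\{\lambda_j+k+1-j\}$ you identify the numerator with the number of $k$-subsets of $\mathbb{Z}/n$ having a prescribed sum residue, and your tuple count gives the explicit two-sided bound
\[
n(n-1)\cdots(n-k+2)-2(k-1)n^{k-2}\;\le\; k!\,N_{k,n}\;\le\; n(n-1)\cdots(n-k+2)
\]
for every $n>k$. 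The paper's route gives finer information, namely the leading asymptotic of each $\dim H^{2in}(X)$ separately, governed by Eulerian numbers, but it depends on an external theorem. Yours is self-contained and gives an effective error term rather than only a limit. It also makes clear why the leading term is $\binom{n}{k}/n$: subset sums are asymptotically equidistributed modulo $n$. Since your count works for any target residue $c$, it shows that every $V_j$ (not only $V_0$) has dimension $n^{k-1}/k!+O(n^{k-2})$. You are also right to exclude $k=0$. There the quotient equals $n$, so the statement as written fails, and the paper's proof, which involves $(k-1)!$ and $A_{k-1,i}$, implicitly assumes $k\geq 1$ as well.
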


\begin{proof}
The following asymptotic formula was proved by Stanley and Zanello (see Theorem 2.4 in \cite{SZ}): For fixed $i, r, k$,
as $n \rightarrow \infty$,
\[
p(in+r \mid k, n-k) = \frac{1}{k!(k-1)!} A_{k-1, i} (n-k)^{k-1} + O((n-k)^{k-2}),
\]
where $A_{k-1, i} := \sum_{j=0}^i (-1)^j \binom{k}{j} (i-j)^{k-1}$ is the Euler number.
Hence, for $n > k^2$, the left hand side of equation \eqref{eqn:AsymptBound} is equal to
\[  \sum_{i=0}^{k-1} \lim_{n \rightarrow \infty} \frac{ p(in \mid n-k,k)}{n^{(k-1)}}
 =  \sum_{i=0}^{k-1} \frac{1}{k!(k-1)!} A_{k-1, i}
 = \frac{1}{k!}, \]
 where the last equality follows from a standard property of Euler numbers: $\sum_{i=0}^{k-1} A_{k-1, i} = (k-1)!$
 (see Exercise 1.49 on page 30 in \cite{Ai}).
 The lemma is thus proved.
\end{proof}

Since $\dim H^*(X) =  \binom{n}{k}$, which is equal to the number of partitions with Young diagram contained in the
$k \times (n-k)$ rectangle, we have
\[ \lim_{n \rightarrow \infty} \frac{\dim H^*(X)}{n^k} = \frac{1}{k!}.\]
An immediate consequence of Theorem \ref{thm: estimation of dimension of Span(Delta^i)} and Lemma \ref{lem:AsymptBound}
is the following

\begin{corollary} \label{cor:SymptBound}
For fixed $k \geq 1$,
\[ \overline{\lim_{n \rightarrow \infty}} \,\,\, D_2 \,\, \frac{\dim(\mathfrak{F})}{\dim H^*(X)} \leq 1. \]
\end{corollary}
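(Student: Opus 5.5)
The plan is to combine Theorem \ref{thm: estimation of dimension of Span(Delta^i)} with Lemma \ref{lem:AsymptBound} and the asymptotic $\dim H^*(X)=\binom{n}{k}\sim n^k/k!$. The only subtlety is the factor $D_2=\gcd(n,k^2)$, which varies with $n$. Since it enters multiplicatively, it should cancel against the factor $n/D_2$ in the upper bound.

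First, multiply inequality \eqref{eqn:UBDimF} by $D_2$. This gives
\[
D_2\,\dim(\mathfrak{F}) \;\leq\; n\sum_{i=0}^{\lfloor k(n-k)/n\rfloor} p(in\mid n-k,k),
\]
which holds for every $n>k$, whatever the value of $D_2$. Now divide both sides by $\dim H^*(X)=\binom{n}{k}$. The right hand side becomes
\[
\frac{n^{k}}{\binom{n}{k}}\cdot \frac{\sum_{i} p(in\mid n-k,k)}{n^{k-1}}.
\]

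Next, take the limit as $n\to\infty$ with $k$ fixed. The first factor tends to $k!$, since $\binom{n}{k}/n^k\to 1/k!$. The second factor tends to $1/k!$ by Lemma \ref{lem:AsymptBound}. The product therefore converges to $1$, so the $\limsup$ of the left hand side is at most $1$.

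There is one point to check: Lemma \ref{lem:AsymptBound} uses $p(in\mid n-k,k)$, whereas Stanley--Zanello is stated for $p(\cdot\mid k,n-k)$. These agree because conjugating a partition swaps the rectangle dimensions, so the count is symmetric. The case $k=1$ is trivial ($\mathbb{P}^{n-1}$, where $D_2=1$ and $\mathfrak{F}=H^*$) and is consistent with the formula. I do not expect any real obstacle; the argument is a routine limit computation.
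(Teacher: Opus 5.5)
Your argument is correct and follows the paper's route. The paper states the corollary as an immediate consequence of inequality \eqref{eqn:UBDimF}, Lemma~\ref{lem:AsymptBound}, and $\dim H^*(X)=\binom{n}{k}\sim n^k/k!$, and it is your cancellation of the varying factor $D_2$ against $n/D_2$ that makes that limit computation go through.
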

This result tells us that for n large enough, $\frac{\dim(\mathfrak{F})}{\dim H^*(X)}$ is approximately bounded from above by
$\frac{1}{D_2} = \frac{1}{\rm gcd(n, k^2)}$.

\begin{example} \label{ex:comparison}
   For $X=Gr(k,n)$,  we denote the right hand side of inequality~\eqref{eqn:UBDimF} by $Est(X)$.
   The following table gives a comparison between $\dim(H^*(X))$ and $Est(X)$ for some Grassmannians.

    \begin{center}
        \begin{tabular}{ | l | c | r | }
  \hline
  $X$ & $dim(H^*(X))$ & $Est(X)$ \\
  \hline
  $Gr(2,4)$ & 6 & 2 \\
  $Gr(2,5)$ & 10 & 10 \\
  $Gr(2,6)$ & 15 &  9   \\
  $Gr(2,7)$ & 21 & 21 \\
  $Gr(2,8)$ & 28 & 8 \\
  $Gr(3,6)$ & 20 & 8 \\
  $Gr(3,7)$ & 35 & 35 \\
  $Gr(3,8)$ & 56 & 56 \\
  $Gr(3,9)$ & 84 & 9 \\
  $Gr(4,8)$ & 70 & 10 \\
  \hline
\end{tabular}
    \end{center}

From this table, we can see that there are many examples where $\dim(\mathfrak{F})$ is much
smaller than $\dim(H^*(X))$.

\end{example}

\subsection{Case of $\text{Gr}(2,n)$}\label{sec: cases of Gr(2,n)}

In this subsection, we will prove that for $X=Gr(2,n)$,
the inclusion relation \eqref{eqn:SpaceContainFGr} (or the inclusion in Theorem \ref{thm: subspace spanned by Delta^i})  is an equality. This implies that the inequality \eqref{eqn:UBDimF} is also an equality and we obtain a precise
formula for $\dim(\mathfrak{F})$ for $Gr(2,n)$. Since $Gr(2, 3)$ is just the projective plane which has been discussed in Section \ref{sec:Pn}, we will assume $n \geq 4$ and set $m=\lfloor \frac{n}{2}\rfloor$.

Recall $V_0 := \bigoplus_{i=0}^{\lfloor \dim(X)/\tau \rfloor} H^{2i\tau}(X)$ is the subspace which
was used to estimate $\dim(\mathfrak{F})$ in Theorem \ref{thm:DimBoundX}.
For $X=Gr(2,n)$, $\dim(X)=2n-4$ and $\tau=n$. So
$V_0 = {\rm Span}\{\Theta_i \mid i=1, \ldots, m\}$ where
\[ \Theta_1 := \sigma_{(0,0)}=\mathbf{1}, \hspace{20pt}
   \Theta_j := \sigma_{(n-j,j)} {\rm \,\,\, for \,\,\,} 2\leq j \leq m. \]
By Remark \ref{rem:InvSubSp}, $V_0 \otimes \mathbb{C}[q, q^{-1}]$ is an invariant submodule of the quantum multiplication
by $\Delta/[pt]$. Let $A_0 = (A_{ij})_{1 \leq i, j \leq m}$ be the matrix of this action on $V_0 \otimes \mathbb{C}[q, q^{-1}]$ with respect to the basis
\[ \Theta^q := \{ \Theta_1, q^{-1} \Theta_2, \ldots, q^{-1} \Theta_m\}.\]
 Note that $\Theta^q$
agrees with the basis defined in equation \eqref{eqn:f(w)} since $y(w)=1$ for all $w$ when $X$ is a Grassmannian.
By Remark \ref{rem:InvSubSp} and Proposition \ref{prop: main result that A is symmetric}, $A_0$ is a positive definite
real symmetric matrix.

By Corollary \ref{cor: formula of Delta for Gr(2,n)},
\begin{equation} \label{eqn:DeltaGr2n}
        \Delta  = \sum_{i=1}^m b_i q^{1-\delta_{i1}} \Lambda_i,
\end{equation}
where
\[ \Lambda_1 := [pt]=\sigma_{(n-2,n-2)}, \hspace{20pt} \Lambda_j := \sigma_{(n-j-2, j-2)} \]
for $2\leq j \leq m$, and
\begin{equation} \label{eqn:bi}
 b_i := n(n+1)/2 - in
\end{equation}
 for $1 \leq i \leq m$.
To compute $A_0$, we need the following formula

\begin{lemma} \label{lem:ThetaLambda}
For $2 \leq i, j \leq m$,
\[ \Theta_i * \Lambda_j = \sum_{k=|i-j|+1}^{\min \{i+j-1, \,\, n+1-i-j\}} q^{1-\delta_{k1}} \Lambda_k.\]
\end{lemma}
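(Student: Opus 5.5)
We compute $\Theta_i*\Lambda_j$ with the machinery of Section \ref{sec: quantum cohomology}: expand with the Littlewood--Richardson rule \eqref{eq: LR formula for schubert classes} using $\hat{\sigma}$'s, then reduce the resulting non-$\mathcal{P}_{2n}$ partitions to Schubert classes via Proposition \ref{prop: rearranging partition} and \eqref{eq: rearranging rows}. Here $\Theta_i=\sigma_{(n-i,i)}$ with $|\Theta_i|=n$, and $\Lambda_j=\sigma_{(n-j-2,j-2)}$ with $|\Lambda_j|=n-4$. The product has degree $2n-4$, so it is a combination of $[pt]=\Lambda_1$ (coefficient $q^0$) and of classes of degree $n-4$ (coefficient $q$). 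The latter are exactly the $\Lambda_k$, $2\le k\le m$. This is consistent with the shape of the claimed formula.

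\textbf{Step 1 (LR expansion).} By Lemma \ref{lem:LRk=2}, we have $\hat{\sigma}_{(n-i,i)}*\hat{\sigma}_{(n-j-2,j-2)}=\sum_\mu \hat{\sigma}_\mu$. The sum runs over two-part partitions $\mu\supset(n-i,i)$ with $|\mu|=2n-4$ satisfying
\[
\mu_2-(n-i)\le j-2\le \mu_1-(n-i)\le n-j-2 .
\]
Parametrize by $t=\mu_1-(n-i)$, so $\mu=(n-i+t,\, n+i-4-t)$. The constraints become $j-2\le t\le n-j-2$, together with $t\ge 2i-4$ (from $\mu_1\ge\mu_2$) and $\mu_2\ge i$, i.e.\ $t\le n-4$. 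Each such $\mu$ contributes with coefficient $1$.

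\textbf{Step 2 (reduction).} For each $\mu$ we apply Proposition \ref{prop: rearranging partition} with $k=2$, where $I_1\in[-1,n-1)$ and $I_2\in[0,n)$. Two cases arise.
\begin{itemize}
\item If $\mu\in\mathcal{P}_{2n}$, i.e.\ $\mu_1\le n-2$, then degree forces $\mu=(n-2,n-2)$, which contributes $[pt]$.
\item Otherwise $\mu_1\ge n-1$ while $\mu_2\le n-2$. Then $J=(\mu_1-n,\mu_2)$ and $\hat{\sigma}_\mu=(-1)^{3}q\,\hat{\sigma}_{(\mu_1-n,\mu_2)}=-q\,\hat{\sigma}_{(\mu_1-n,\mu_2)}$.
\end{itemize}
In the second case, apply \eqref{eq: rearranging rows} to $(a,b)=(\mu_1-n,\mu_2)$. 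If $b=a+1$ we get $0$. If $b>a+1$ we get $-\sigma_{(b-1,a+1)}$, so $\hat{\sigma}_\mu=+q\,\sigma_{(\mu_2-1,\mu_1-n+1)}$. If $a\ge b$, which needs $\mu_1-n\ge\mu_2$, we get a negative term $-q\,\sigma_{(\mu_1-n,\mu_2)}$.

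Writing $\sigma_{(\mu_2-1,\mu_1-n+1)}$ as $\Lambda_k$ requires $\mu_1-n+1=k-2$, i.e.\ $k=t-i+3$. The positive terms should therefore produce $\Lambda_k$ for $k$ ranging over an interval.

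\textbf{Step 3 (bookkeeping and cancellation).} This is where I expect the real work. We must check three things.
\begin{itemize}
\item Negative terms with $a\ge b$ are either absent or cancel against positive terms with the same $\Lambda_k$. The map $\mu\mapsto$ (target class) is close to an involution on the set of $t$'s, so I would pair them.
\item The surviving indices $k=t-i+3$, with $t$ constrained as in Step 1, run exactly from $|i-j|+1$ to $\min\{i+j-1,\,n+1-i-j\}$. Here $k=1$ corresponds to the unreduced $[pt]$ term.
\item Each surviving coefficient is $1$.
\end{itemize}
I would carry out the check by splitting into the cases $i\le j$ and $i>j$, translating the inequalities $j-2\le t\le n-j-2$ and $t\ge 2i-4$ into bounds on $k$. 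The boundary cases $b=a+1$, which give the $0$ terms, account for the gap and produce the $\min$ with $n+1-i-j$.

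Finally, by Lemma \ref{lem:QGiabelli}, $\hat{\sigma}=\sigma$ on $\mathcal{P}_{2n}$, which converts the result back to Schubert classes and completes the argument. As a sanity check, I would verify small cases, e.g.\ $n=5,6$, against the expected symmetry $\Theta_i*\Lambda_j$ in $i\leftrightarrow j$ (Clebsch--Gordan-like form).
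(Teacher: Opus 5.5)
Your route is the paper's: expand with Lemma~\ref{lem:LRk=2}, reduce each $\hat{\sigma}_\mu$ by Proposition~\ref{prop: rearranging partition} and \eqref{eq: rearranging rows}, then cancel. Step 2 is correct. As written, however, Step 1 contains an error that makes Step 3 fail.

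\textbf{The range in Step 1 is wrong.} When you translate your own displayed inequalities, you drop $\mu_2-(n-i)\le j-2$. With $\mu_2=n+i-4-t$ this reads $t\ge 2i-j-2$. You replace it with ``$t\ge 2i-4$ (from $\mu_1\ge\mu_2$)'', but $\mu_1\ge\mu_2$ only gives $t\ge i-2$, and that already follows from the other two lower bounds. Your constraints therefore exclude terms that must be there:
\begin{itemize}
\item for $i=j\ge 3$ they exclude $t=i-2$, i.e.\ $\mu=(n-2,n-2)$, which is exactly the $q^0\Lambda_1=[pt]$ term the formula requires when $i=j$;
\item for $(i,j)=(4,3)$ they lose $t=3$, which is the $\Lambda_2$ term.
\end{itemize}
The correct range is $i-2+|i-j|\le t\le n-j-2$. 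This is equivalent to the paper's $i+j-2\le\mu_2\le n-2-|i-j|$.

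\textbf{Step 3 misplaces where the bounds come from.} It is cleanest to index by $k=t-i+3$. Then $\mu=(n-3+k,\,n-1-k)$ with $|i-j|+1\le k\le n+1-i-j$, and in your notation $(a,b)=(k-3,\,n-1-k)$. The reduction gives:
\begin{itemize}
\item $k=1$ gives $[pt]=\Lambda_1$;
\item $2\le k\le m$ gives $+q\Lambda_k$;
\item $2k=n+1$ (only for $n$ odd, $k=m+1$) gives $0$;
\item $2k\ge n+2$ gives $-q\Lambda_{n+1-k}$.
\end{itemize}
The negative terms are $-q\Lambda_s$ with $i+j\le s\le m$, present only when $i+j\le m$. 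Each cancels the positive term with the same index, which is present because $n+1-i-j\ge m+1$ in that case.

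So the cutoff $i+j-1$ comes from this cancellation, and $n+1-i-j$ is simply the top of the $k$-range. The $b=a+1$ zero matters only in the edge case $n$ odd, $i+j=m+1$, where it removes $k=m+1=n+1-i-j$ and makes $i+j-1$ the active bound. Your Step 3 attributes the $\min$ with $n+1-i-j$ to the zero terms, which is not where it comes from. With the corrected range and this bookkeeping, your argument becomes exactly the paper's ``obvious cancellation.''
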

\begin{proof}
By equation \eqref{eq: LR formula for schubert classes},
\[
\Theta_i * \Lambda_j = \sum_{\substack{\lambda_1 \geq \lambda_2 \geq 0 \\ \lambda_1 + \lambda_2 = 2n-4 } }
                    C_{(n-i, i), (n-j-2, j-2)}^{(\lambda_1, \lambda_2)}
                            \,\, \, \hat{\sigma}_{(\lambda_1, \lambda_2)},
\]
where $C_{(n-i, i), (n-j-2, j-2)}^{(\lambda_1, \lambda_2)}$ is the Littlewood-Richardson coefficient.
By Lemma \ref{lem:LRk=2}, this coefficient is either 0 or 1, and it is equal to 1 if and only if $(\lambda_1, \lambda_2)$
satisfies the following conditions: $\lambda_1 + \lambda_2 = 2n-4$ and
\[
\lambda_2 \geq i, \hspace{10pt}
n-j-2 \geq \lambda_1-n+i, \hspace{10pt}
\lambda_2-n+i \leq j-2 \leq \lambda_1 -n + i.
\]
These condition is equivalent to $ i+j-2 \leq \lambda_2 \leq n-2 - |i-j|$. Set $\lambda_2=k$. We have
\begin{eqnarray*}
\Theta_i * \Lambda_j
&=& \sum_{k=i+j-2}^{n-2-|i-j|} \,\, \, \hat{\sigma}_{(2n-4-k, k)}.
\end{eqnarray*}
By Proposition \ref{prop:  rearranging partition}, for $ i+j-2 \leq k \leq m-2$,  we have
\[ \hat{\sigma}_{(2n-4-k, k)} = -q \hat{\sigma}_{(n-4-k, k)} = -q \Lambda_{k+2}, \]
where the last equality also used Lemma \ref{lem:QGiabelli}.
For  $ m-1 \leq k \leq n-3$, we first use Proposition \ref{prop:  rearranging partition}, then use equation
\eqref{eq:  rearranging rows} to obtain
\[ \hat{\sigma}_{(2n-4-k, k)} = -q \hat{\sigma}_{(n-4-k, k)} = q \hat{\sigma}_{(k-1, n-3-k)} = q \Lambda_{n-1-k}.\]
In this formula we should set $\Lambda_{m+1}:=0$ if $n=2m+1$. Hence we have
\begin{eqnarray*}
\Theta_i * \Lambda_j
&=& \delta_{ij}  \Lambda_{1}  - q \sum_{k=i+j-2}^{m-2} \Lambda_{k+2}
    + q \sum_{k=\max\{m-1,i+j-2\}}^{\min\{n-3, n-2-|i-j|\}} \Lambda_{n-1-k}.
\end{eqnarray*}
This implies the desired formula after obvious cancellation.
\end{proof}

Now we can compute the matrix $A_0=(A_{ij})$ of quantum multiplication by $\Delta/[pt]$ with respect to the basis
$\Theta^q$.

\begin{proposition}
\label{prop: explicit form of A_0 for Gr(2,2m)}

    $A_0=(A_{ij})_{1 \leq i, j\leq m}$ is a symmetric matrix with
    \[ A_{ij} = (2i-1) b_j \]
    for all $1 \leq i \leq j \leq m$, where $b_j$ is given by equation \eqref{eqn:bi}.
\end{proposition}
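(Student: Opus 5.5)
The plan is to compute the action of $\Delta$ on each basis vector of $\Theta^q$ directly, using equation \eqref{eqn:DeltaGr2n}, and then to rewrite the answer in terms of $[pt]*\Theta_k$. The key input is the relation between $\Lambda_k$ and $\Theta_k$ under multiplication by the point class. By Lemma \ref{lemma multiplying Omega of constant partition},
\[
[pt]*\Theta_j=\hat\sigma_{(2n-2-j,\,n-2+j)} \quad \text{for } 2\le j\le m .
\]
Proposition \ref{prop: rearranging partition} together with equation \eqref{eq: rearranging rows} then reduces this to $\pm q\,\sigma_{(n-j-2,\,j-2)}=\pm q\Lambda_j$; the sign should come out to be $+$ by the same computation as in the proof of Lemma \ref{lem:ThetaLambda}. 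Also $[pt]*\Theta_1=\Lambda_1$. Hence
\[
q^{1-\delta_{k1}}\Lambda_k=[pt]*\Theta_k ,
\]
and in the basis $\Theta^q$ this reads $[pt]*(q^{-1+\delta_{k1}}\Theta_k)=\Lambda_k$ up to powers of $q$ that I will track.

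Next I compute $\Delta*\Theta_i$ in terms of the $\Lambda_k$.

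\emph{Case $i=1$.} We have $\Delta*\mathbf 1=\Delta=\sum_k b_k q^{1-\delta_{k1}}\Lambda_k$. Dividing by $[pt]$ and passing to the basis $\Theta^q$, the first column of $A_0$ is $A_{k1}=b_k$. This is consistent with the claimed symmetric formula $A_{1k}=(2\cdot1-1)b_k=b_k$.

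\emph{Case $i\ge2$.} I first need $\Theta_i*\Lambda_1=\Theta_i*[pt]=q\Lambda_i$. Using this together with Lemma \ref{lem:ThetaLambda}, I get
\[
\Delta*\Theta_i=b_1q\Lambda_i+\sum_{j=2}^m b_j q\sum_{k=|i-j|+1}^{\min\{i+j-1,\,n+1-i-j\}}q^{1-\delta_{k1}}\Lambda_k .
\]
After normalizing by the factor $q^{-1}$ in the basis, $A_{ki}$ is a sum of $b_j$ over those $j$ whose interval $[\,|i-j|+1,\ \min\{i+j-1,n+1-i-j\}\,]$ contains $k$, plus $b_1\delta_{ki}$. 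Equivalently,
\[
A_{ki}=\sum_{j=1}^m b_j\,\mathbf{1}\bigl[\,|i-j|+1\le k\le \min\{i+j-1,\,n+1-i-j\}\,\bigr],
\]
where the $j=1$ term recovers $\delta_{ki}$.

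The last step is to evaluate this sum for $k\le i$ and show it equals $(2k-1)b_i$. It is convenient to set $j=i+t$. For $k\le i$, the condition $|i-j|+1\le k\le i+j-1$ becomes $|t|\le k-1$. The remaining condition is $k\le n+1-2i-t$. Since $b_j=n(n+1)/2-jn$ is linear in $j$, the terms with $t$ and $-t$ pair off to give $2b_i$, so if all $2k-1$ values of $t$ are admissible the sum is $(2k-1)b_i$.

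The main obstacle is the truncation near $j\approx m$, coming from the upper bound $n+1-i-j$ and the constraint $j\le m$. Some values of $t$ are cut off there, and I expect the missing terms to be compensated. When $n=2m$, the identity $b_{j}+b_{n+1-j}=0$ (note $b_{(n+1)/2}=0$) should pair truncated terms with reflected ones. For $n$ odd the situation is analogous with $b_{m+1}$ handled by the convention $\Lambda_{m+1}=0$. I would handle this by reflecting $j\mapsto n+1-j$ and checking the cancellation in the two parities separately. Symmetry of $A_0$ is already guaranteed by Lemma \ref{lem:matrixA}, so it suffices to verify the formula for $k\le i$, which is the case I have written out; the statement's $A_{ij}$ with $i\le j$ is the same thing after swapping the names of the indices.
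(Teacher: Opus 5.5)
Your route matches the paper's up to the entry formula. You reduce $[pt]*\Theta_j$ with Lemma~\ref{lemma multiplying Omega of constant partition} and Proposition~\ref{prop: rearranging partition}, expand $\Delta*\Theta_i$ via Lemma~\ref{lem:ThetaLambda}, and arrive at $A_{ki}=\sum_{j=|i-k|+1}^{\min\{i+k-1,\,n+1-i-k\}}b_j$. This is \eqref{eqn:Aij} with the indices renamed.

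There is one slip on the way. For $(2n-2-j,\,n-2+j)$, Proposition~\ref{prop: rearranging partition} gives $r=2$, so $[pt]*\Theta_j=q^{2}\Lambda_j$. The sign is $(-1)^{2\cdot 3}=+1$, no row swap is needed, and the degree count $3n-4=(n-4)+2n$ forces $q^2$. Likewise $\Theta_i*[pt]=q^2\Lambda_i$, not $q\Lambda_i$. So your displayed identities are not homogeneous as written. The damage is only cosmetic: $[pt]*(q^{\delta_{k1}-1}\Theta_k)=q^{1-\delta_{k1}}\Lambda_k$ and every entry of $A_0$ has degree $0$, so the coefficients you read off are the correct ones.

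The final evaluation is where you genuinely differ from the paper. The paper checks $A_{1j}=b_j$ and then telescopes. It proves $A_{ij}-A_{(i-1)j}=2b_j$ by tracking how the endpoints of the summation range move, in three cases: $2(i+j)\le n+2$, $2(i+j)\ge n+4$, or $2(i+j)=n+3$. Your direct evaluation, pairing $t$ with $-t$ around $j=i$, avoids that split. However, you left the truncated case as an expectation.

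It closes in one line and needs no parity distinction. Regard $b_j=n(n+1)/2-jn$ as a linear function on all integers $j$. Suppose $k\le i$ and $n+1-i-k<i+k-1$. Since $2i\le n$, you can write
\[
A_{ki}=\sum_{j=i-k+1}^{i+k-1}b_j-\sum_{j=n+2-i-k}^{i+k-1}b_j .
\]
The first sum is $(2k-1)b_i$ by linearity. The removed block $\{n+2-i-k,\dots,i+k-1\}$ is mapped onto itself by $j\mapsto n+1-j$, and $b_j+b_{n+1-j}=0$, so the second sum vanishes. When $n$ is odd, the fixed point $j=(n+1)/2$ has $b_j=0$.

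Your aside about $\Lambda_{m+1}=0$ plays no role here. That convention is already built into the upper limit of Lemma~\ref{lem:ThetaLambda}, and the terms in the removed block are just values of the linear function $b$, some with $j>m$.

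With this line added your argument is complete. It gives a single uniform reason why truncation never changes the answer, where the paper handles it case by case.
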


\begin{proof}

Note that $[pt]=\Lambda_1=\sigma_{(n-2,n-2)}$.
By Lemma \ref{lemma multiplying Omega of constant partition} and Proposition \ref{prop:  rearranging partition}, we have
\begin{equation} \label{eqn:[pt]*k=2}
 [pt]* \sigma_{(\lambda_1, \lambda_2)} = \hat{\sigma}_{(\lambda_1+n-2, \lambda_2+n-2)}
 = q^2 \sigma_{(\lambda_1-2, \lambda_2-2)}
\end{equation}
if $n-2 \geq \lambda_1 \geq \lambda_2 \geq 2$.
Hence
\begin{equation} \label{eqn:ptTheta}
[pt]* \Theta_i = q^{2-2 \delta_{i1}} \Lambda_i  \hspace{20pt} {\rm for \,\,\,} 1 \leq i \leq m.
\end{equation}
So the matrix $A_0=(A_{ij})$ is given by

\begin{equation*}
        \Delta* q^{ \delta_{i1}-1} \Theta_i = \sum_{j=1}^m A_{ij} q^{1- \delta_{j1}} \Lambda_j
\end{equation*}
for all $1 \leq i \leq m$.
By equation \eqref{eqn:DeltaGr2n}, we have $A_{1j} = b_j$ for all $1 \leq j \leq m$, where $b_j$ is defined by
equation \eqref{eqn:bi}.

Assume $2 \leq i \leq m$. By equations \eqref{eqn:DeltaGr2n}, \eqref{eqn:ptTheta}, and Lemma \ref{lem:ThetaLambda}, we have
    \begin{align*}
       \Delta * q^{-1} \Theta_i
       &= b_1 q \Lambda_i + \sum_{j=2}^m b_j \sum_{k=|i-j|+1}^{\min \{i+j-1, \,\, n+1-i-j\}} q^{1-\delta_{k1}} \Lambda_k.
    \end{align*}
Taking coefficients of $\Lambda_1$ on both sides, we obtain $A_{i1}=b_i$ for all $2 \leq i \leq m$.
Taking coefficients of $q\Lambda_k$ on both sides for $2 \leq k \leq m$, we have
\[ A_{ik} = b_1 \delta_{ik} + \sum_j b_j, \] where the summation is over all $j$ satisfying conditions
\[
j \geq 2, \hspace{20pt}
|i-j|+1 \leq k \leq \min \{i+j-1, \,\, n+1-i-j\},
\]
which are equivalent to
\[ j \geq 2, \hspace{20pt}
1+|i-k| \leq j \leq \min\{n+1-i-k, \,\, i+k-1\}.\]
Hence for $1 \leq i, j \leq m$,  we have
    \begin{equation} \label{eqn:Aij}
    A_{ij}=\sum_{k=|j-i|+1}^{\min\{i+j-1, \,\, n-i-j+1\}} b_k.
    \end{equation}
This implies that $A_0$ is a symmetric matrix.

Since $A_{1j} = b_j$ for all $1 \leq j \leq m$, to prove the proposition, we only need to show
$A_{ij}-A_{(i-1)j} = 2 b_j$ for all $2 \leq i \leq j \leq m$. By equation \eqref{eqn:Aij},
if $2(i+j) \leq n+2$,
\[ A_{ij}-A_{(i-1)j} = b_{j-i+1} + b_{i+j-1} = 2 b_j.\]
If $2(i+j) \geq n+4$,
\[ A_{ij}-A_{(i-1)j} = b_{j-i+1} - b_{n-i-j+2} = 2 b_j.\]
If $n$ is odd and $2(i+j) = n+3$,
\[ A_{ij}-A_{(i-1)j} = b_{j-i+1} = 2 b_j.\]
The proposition is thus proved.
\end{proof}

\begin{lemma} \label{lem:A0Mult=1}
(i)  Multiplicities of all eigenvalues of $A_0$ are 1.

\hspace{55pt}(ii) If $x=(x_1, \ldots, x_m)^T$ is an eigenvector of $A_0$, then $x_1 \neq 0$.
\end{lemma}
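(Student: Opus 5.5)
The plan is to prove (ii) first and then deduce (i) from it. The deduction is immediate. $A_0$ is a real symmetric matrix, so each eigenspace has dimension equal to the multiplicity of its eigenvalue. If some eigenvalue had multiplicity $\geq 2$, its eigenspace would have dimension $\geq 2$. The linear condition $x_1=0$ cuts it down by at most one dimension, so the eigenspace would contain a nonzero eigenvector with $x_1=0$, contradicting (ii). So everything reduces to showing that an eigenvector $x$ with eigenvalue $\lambda$ and $x_1=0$ must vanish. Recall that $A_0$ is positive definite (Remark~\ref{rem:InvSubSp} together with Proposition~\ref{prop: main result that A is symmetric}), so $\lambda>0$; in particular $\lambda\neq 0$, which is the only spectral input needed. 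The case $m=1$ is trivial, so assume $m\geq 2$.

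The key step is to use the explicit shape from Proposition~\ref{prop: explicit form of A_0 for Gr(2,2m)}, namely $A_{ij}=(2\min(i,j)-1)\,b_{\max(i,j)}$, to turn $A_0x=\lambda x$ into a three-term recurrence. We subtract consecutive rows, using $b_i-b_{i-1}=-n$ from \eqref{eqn:bi}. For $2\le i\le m$ this gives $A_{ij}-A_{(i-1)j}=2b_j$ when $j\ge i$, and $A_{ij}-A_{(i-1)j}=-(2j-1)n$ when $j\le i-1$. Set $S_i:=\sum_{j\ge i} b_jx_j$ and $T_i:=\sum_{j<i}(2j-1)x_j$. The first row of the eigen-equation and the differenced rows become
\[
\lambda x_1=S_1,\qquad 2S_i-nT_i=\lambda(x_i-x_{i-1})\quad (2\le i\le m).
\]
Subtracting the $i$-th of these from the $(i+1)$-st, for $2\le i\le m-1$, and using $S_{i+1}-S_i=-b_ix_i$ and $T_{i+1}-T_i=(2i-1)x_i$, yields
\[
\lambda x_{i+1}=\bigl(2\lambda-2b_i-(2i-1)n\bigr)x_i-\lambda x_{i-1}.
\]

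Now suppose $x_1=0$. The first equation gives $S_1=0$, hence $S_2=S_1-b_1x_1=0$. The equation for $i=2$ reads $2S_2-nx_1=\lambda(x_2-x_1)$, so $\lambda x_2=0$ and therefore $x_2=0$ since $\lambda\neq0$. Since $\lambda\neq 0$, the recurrence determines $x_{i+1}$ from $x_i$ and $x_{i-1}$, so induction gives $x_i=0$ for all $i$. Thus $x=0$, which proves (ii) and hence (i).

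The only delicate point is the bookkeeping of the row differences: one must check that the boundary terms at $j=i-1$ and $j=i$ fall into the correct case of the $\min/\max$ formula. This is routine once the formula $A_{ij}=(2\min(i,j)-1)b_{\max(i,j)}$ is written down.
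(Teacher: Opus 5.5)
Your proof is correct, and the reduction of (i) to (ii) is the same as in the paper. Your proof of (ii), however, uses a different mechanism.

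The paper inducts directly. If $x_1=\cdots=x_i=0$, the first row of $A_0x=\lambda x$ gives $\sum_{k>i}b_kx_k=\lambda x_1=0$. On the columns $k\ge i+1$, which are the only ones still carrying unknowns, row $i+1$ is exactly $(2i+1)$ times row $1$. Hence $\lambda x_{i+1}=(2i+1)\sum_{k>i}b_kx_k=0$.

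You instead eliminate the tail sums $S_i,T_i$ by taking second differences of rows. I checked the first differences (including the boundary columns $j=i-1$ and $j=i$), the step $\lambda x_2=0$, and the recurrence, and they are right.

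The paper's route is shorter. It uses only the proportionality $A_{ik}=(2i-1)b_k$ for $i\le k$ and needs nothing about the actual values of the $b_k$. As written, yours uses that the $b_i$ form an arithmetic progression, so that one more difference removes $T_i$.

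In return, your recurrence holds for every eigenvector and carries more information than you used. Since $2b_i+(2i-1)n=n^2$, it reads $\lambda x_{i+1}=(2\lambda-n^2)x_i-\lambda x_{i-1}$, with constant coefficients. Your first two equations give the local boundary condition $\lambda x_2=(3\lambda-n^2)x_1$. Combining row $m$ with the difference equation at $i=m$ eliminates $T_m$ and gives a second local boundary condition.

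Solving this boundary value problem yields the whole spectrum explicitly. The eigenvalues are $n^2/(4\sin^2(j\pi/n))$ for $1\le j\le m$, with eigenvectors proportional to $x_i=\sin((2i-1)j\pi/n)$. This gives (i), (ii) and the positive definiteness of $A_0$ all at once, since the $m$ values are distinct and positive and $x_1=\sin(j\pi/n)\neq 0$.
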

\begin{proof}
Part (i) actually follows from part (ii). In fact, if there exists an eigenvalue with multiplicity bigger than 1, then
since $A_0$ is symmetric, there exist two linearly independent eigenvectors with same eigenvalue.
We can always make a linear combination of these two eigenvectors to obtain a new eigenvector whose first component is 0.
This contradicts part (ii).

Now we prove part (ii). Assume there exists an eigenvector $x=(x_1, \ldots, x_m)^T$ with $x_1 = 0$.
We want to show all $x_j=0$ by induction on $j$. Assume there exist $1 \leq i < m$ such that
$x_1=\cdots = x_i =0$. Since $A_0$ is positive definite by Proposition \ref{prop:  main result that A is symmetric}
and $x$ is an eigenvector, there exists a positive number $\lambda$ such that
$A_0 x = \lambda x$. By Proposition \ref{prop: explicit form of A_0 for Gr(2,2m)},
the first component on both sides of this equation gives
\[ \sum_{k=i+1}^m A_{1, k} \,\, x_k = \sum_{k=i+1}^m b_{k} x_k =0 \]
and the (i+1)-th component on both sides of equation $A_0 x = \lambda x$ gives
\[ x_{i+1} = \lambda^{-1} \sum_{k=i+1}^m A_{i+1, k} \,\, x_k = \lambda^{-1} (2i+1) \sum_{k=i+1}^m b_{k} x_k =0. \]
By induction, $x=0$, which contradicts to the assumption that $x$ is an eigenvector.
The lemma is thus proved.
\end{proof}

We can now give a precise description for $\mathfrak{F}_q = {\rm Span}_{\mathbb{C}[q,q^{-1}]} \{ \Delta^{*k} \mid k \geq 0\}$.

\begin{theorem}\label{thm: case of Gr(2,n)}
    For $X=\text{Gr}(2,n)$,
    \begin{equation} \label{eqn:FGr2n}
 \mathfrak{F}_q = \bigoplus_{\substack{j \equiv 0\;(mod\;D_2) \\ 0 \leq j \leq n-1}}   V_j \otimes \mathbb{C}[q,q^{-1}],
\end{equation}
where $D_2 = {\rm gcd}(n, 4)$ and
$V_j := {\rm Span}_{\mathbb{C}} \{ \sigma_\lambda
     \mid \lambda \in \mathcal{P}_{2n}, |\lambda| \equiv j ({\rm mod} \,\,\, n)\}$.
In particular,  $\mathfrak{F}_q= QH^*(X)_q$ if $n$ is odd.
\end{theorem}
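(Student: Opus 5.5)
The inclusion ``$\subset$'' is Theorem~\ref{thm: subspace spanned by Delta^i}, so I only need to prove that the right hand side of \eqref{eqn:FGr2n} is contained in $\mathfrak{F}_q$. Since $\deg(\Delta/[pt])=0$ and $[pt]$ is invertible with $[pt]^{*n/D_1}=q^{2(n-2)/D_1}\mathbf{1}$ by \eqref{eqn: Omega_per is n-periodic}, I will write $\Delta^{*k}=(\Delta/[pt])^{*k}*[pt]^{*k}$. The key object is the cyclic submodule generated by $\mathbf{1}$ under $\Delta/[pt]$, which stays inside the invariant submodule $V_0\otimes\mathbb{C}[q,q^{-1}]$ of Remark~\ref{rem:InvSubSp}.

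\emph{Step 1: $\mathbf{1}$ is a cyclic vector for $\Delta/[pt]$ on $V_0\otimes\mathbb{C}[q,q^{-1}]$.} In the basis $\Theta^q$ the vector $\mathbf{1}=\Theta_1$ is $e_1=(1,0,\ldots,0)^T$. By Lemma~\ref{lem:A0Mult=1}, $A_0$ is diagonalizable with $m$ distinct eigenvalues and every eigenvector has nonzero first coordinate. Hence $e_1$ has a nonzero component along each eigenvector, and so $e_1, A_0e_1,\ldots,A_0^{m-1}e_1$ span $\mathbb{C}^m$ (Vandermonde). Since $A_0$ has constant entries in this basis, the same holds over $\mathbb{C}[q,q^{-1}]$. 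Thus the elements $(\Delta/[pt])^{*k}$, $0\le k\le m-1$, span $V_0\otimes\mathbb{C}[q,q^{-1}]$.

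\emph{Step 2: reach each $V_j$ with $j\equiv 0 \pmod{D_2}$.} I use periodicity. Let $N=n/D_1$ (here $D_1=\gcd(2,n)$), so that $[pt]^{*N}$ is a unit multiple of $\mathbf{1}$. Then $(\Delta/[pt])^{*N\ell}$ is a unit times $\Delta^{*N\ell}$. More generally, $\Delta^{*k}=(\Delta/[pt])^{*k}*[pt]^{*k}$. Because $\Delta/[pt]$ commutes with multiplication by $[pt]^{*k}$, and the latter is an isomorphism $V_0\otimes\mathbb{C}[q,q^{-1}]\to V_{k(2n-4)}\otimes\mathbb{C}[q,q^{-1}]=V_{-4k}\otimes\mathbb{C}[q,q^{-1}]$, I get
\[
\mathrm{Span}\{\Delta^{*k'}: k'\equiv k \pmod N\} \supseteq [pt]^{*k}*\mathrm{Span}\{(\Delta/[pt])^{*(k+N\ell)}:\ell\ge 0\}.
\]
It remains to show that the span over $\ell\ge0$ of $(\Delta/[pt])^{*(k+N\ell)}*\mathbf{1}$ is all of $V_0\otimes\mathbb{C}[q,q^{-1}]$. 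This uses the positivity in Proposition~\ref{prop: main result that A is symmetric}: the eigenvalues $\lambda_i>0$ of $A_0$ are distinct, so the $\lambda_i^N$ are also distinct. Decomposing $A_0^k e_1$ along eigenvectors, its components are $\lambda_i^k c_i\neq 0$ with $c_i\neq0$ from Step 1. A Vandermonde argument in the $\lambda_i^N$ then shows that $\{A_0^{k+N\ell}e_1\}_{\ell}$ spans $\mathbb{C}^m$.

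\emph{Step 3: the residues.} As $k$ ranges over $0,\ldots,N-1$, the indices $-4k \bmod n$ cover exactly the residues $j\equiv 0 \pmod{\gcd(4,n)}=D_2$. Combining with Step 2, $\mathfrak{F}_q$ contains every $V_j\otimes\mathbb{C}[q,q^{-1}]$ with $D_2\mid j$, which gives equality in \eqref{eqn:FGr2n}. For $n$ odd we have $D_2=1$, so $\mathfrak{F}_q=QH^*(X)_q$.

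The main obstacle is Step 2, specifically ensuring that restricting to exponents in a fixed residue class mod $N$ does not lose spanning. This is exactly where distinctness of the $\lambda_i^N$ is needed. Positivity of the real eigenvalues is what guarantees it, since distinct positive numbers have distinct $N$-th powers. Without positivity, $\lambda$ and $-\lambda$ could collide, as in quadrics.
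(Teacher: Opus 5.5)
Your proposal is correct and is essentially the paper's argument. You use distinct positive eigenvalues of $A_0$ together with eigenvectors having nonzero first coordinate, and run a Vandermonde argument on the $N$-th powers of those eigenvalues. One implicit point: the nonzero first coordinates give nonzero components of $e_1$ only because $A_0$ is symmetric, so its eigenvectors are orthogonal; the paper makes this explicit with an orthonormal eigenbasis. The paper first obtains $V_0\otimes\mathbb{C}[q,q^{-1}]\subset\mathfrak{F}_q$ from the $\Delta^{*N\ell}$ and then transports it by $\Delta^{*k}$, using that $\mathfrak{F}_q$ is $\Delta$-stable and $\Delta$ is invertible; you instead work one residue class mod $N$ at a time and transport by $[pt]^{*k}$, which amounts to the same thing.
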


\begin{proof}
    Let $D_1 = {\rm gcd}(n, 2)$. By equation \eqref{eqn: Omega_per is n-periodic},
    $[pt]^{* \frac{n}{D_1}} = q^{2(n-2)/D_1} \mathbf{1}$. Let
    \[ \Psi := (\Delta/[pt])^{*\frac{n}{D_1}} = q^{2(2-n)/D_1} \Delta^{*\frac{n}{D_1}}.\]
    Then the matrix of quantum multiplication by $\Psi$ on $V_0 \otimes  \mathbb{C}[q, q^{-1}]$ with respect to
    the basis $\Theta^q$ is $B:=A_0^{\frac{n}{D_1}}$.

    Since $A_0$ is a positive definite real symmetric matrix, so is $B$.
    Hence there exists orthonormal basis $\{e_1, \ldots, e_m\}$ of $\mathbb{R}^m$  and positive numbers
    $\lambda_1, \ldots, \lambda_m$ such that $B e_i = \lambda_i e_i$ for all $1 \leq i \leq m$.
    We can choose $\{e_1, \ldots, e_m\}$ to be also eigenvectors of $A_0$. By
    Lemma \ref{lem:A0Mult=1}, the first component of $e_i$ is not 0 for all $i$ and
    $\lambda_1, \ldots, \lambda_m$ are pairwise distinct positive numbers since
    eigenvalues of $A_0$ are pairwise distinct and positive.

    The coordinates of $\mathbf{1} \in V_0 \otimes \mathbb{C}[q, q^{-1}]$
    with respect to the basis $\Theta^q$ gives the vector $v:=(1,0,...,0)^T \in \mathbb{R}^m$.
    Write $v=\sum_{i=1}^m a_i e_i$. Then $a_i \neq 0$ since it is the first component of $e_i$ for all $i$.
    Hence $\{ a_i e_i \mid i=1, \ldots, m\}$ is also a basis of $\mathbb{R}^m$.
    The vector of coordinates of $\Psi^{*k} = \Psi^{*k} * \mathbf{1}$ with respect to the basis $\Theta^q$ is
    $B^k v = \sum_{i=1}^m a_i B^k e_i= \sum_{i=1}^m a_i \lambda_i^k e_i$.
    So the coordinates of $\Psi^{*k}$ with respect to the basis $\{ a_i e_i \mid i=1, \ldots, m\}$ is
    $(\lambda_1^k, \ldots, \lambda_m^k)$.
    Since $\lambda_1, \ldots, \lambda_m$ are pairwise distinct,
    the Vandermonde matrix

    \[
    \begin{bmatrix}
        1 & 1 & \cdots & 1\\
        \lambda_1 & \lambda_2 & \cdots & \lambda_m \\
        \vdots&\vdots&\ddots&\vdots\\
        \lambda_1^{(m-1)} & \lambda_2^{(m-1)} & \cdots & \lambda_m^{(m-1)}
    \end{bmatrix}
    \]

    \noindent is invertible.
    Thus $\{B^k v \mid 0\leq k \leq m-1\}$ is also a basis of $\mathbb{R}^m$.
    This implies that $\{ \Psi^{* k} \mid 0\leq k \leq m-1\}$ is a basis of $V_0\otimes\mathbb{C}[q,q^{-1}]$.
    Therefore  $V_0\otimes\mathbb{C}[q,q^{-1}] \subset \mathfrak{F}_q$.

    By Proposition \ref{prop:  main result that A is symmetric}, $\Delta$ is invertible. Since $\deg(\Delta)=\dim(X)$,
    by the proof of Lemma \ref{lem: same of cardinality}, for any $j\equiv0\;(mod\;D_2)$ and $0 \leq j \leq n-1$
    there exists non-negative integer $k$ such that quantum multiplication by $\Delta^{*k}$
    gives an isomorphism between $V_0\otimes\mathbb{C}[q,q^{-1}]$ and $V_j\otimes\mathbb{C}[q,q^{-1}]$.
    Hence $V_j \otimes\mathbb{C}[q,q^{-1}] \subset \mathfrak{F}_q$.
    Therefore equation \eqref{eqn:FGr2n} follows from Theorem \ref{thm: subspace spanned by Delta^i}.

    When $n$ is odd, $D_2=1$. So equation \eqref{eqn:FGr2n} implies that
     $\mathfrak{F}_q = QH^*(X)_q$. We thus finish the proof of this theorem
     and also complete the proof for Theorem~\ref{thm:  main thm 2}.
\end{proof}

\begin{corollary}
    For $\text{Gr}(2,n)$, $\dim (\mathfrak{F}) = \frac{n}{\text{gcd}(4,n)}\cdot\lfloor\frac{n}{2}\rfloor$.
\end{corollary}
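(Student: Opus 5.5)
We deduce the corollary from Theorem~\ref{thm: case of Gr(2,n)} by specializing $q=1$ and counting dimensions. Equation~\eqref{eqn:FGr2n} says that $\mathfrak{F}_q$ is a free $\mathbb{C}[q,q^{-1}]$-module with basis the Schubert classes $\sigma_\lambda$, $\lambda\in\mathcal{P}_{2n}$, with $|\lambda|\equiv j \pmod n$ for some $j\equiv 0 \pmod{D_2}$, $0\le j\le n-1$. Since $\mathrm{Gr}(2,n)$ is Fano, $\mathfrak{F}$ is obtained from $\mathfrak{F}_q$ by setting $q=1$. We must check that this specialization does not drop rank.

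First we show $\dim\mathfrak{F}\le \operatorname{rank}\mathfrak{F}_q$. This follows from \eqref{eqn:SpaceContainFGr}, since at $q=1$ every $\Delta^{*k}$ lies in $\bigoplus_{j} V_j$. For the reverse inequality, we rerun the proof of Theorem~\ref{thm: case of Gr(2,n)} at $q=1$. The matrix $A_0$ of Proposition~\ref{prop: explicit form of A_0 for Gr(2,2m)} has constant entries $(2i-1)b_j$, independent of $q$. With respect to $\Theta^q|_{q=1}$, which is the Schubert basis of $V_0$, the vectors $\Psi^{*k}|_{q=1}$, $0\le k\le m-1$, have coordinates $B^k v$. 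These are linearly independent by the Vandermonde argument, using Lemma~\ref{lem:A0Mult=1}. Hence $V_0\subset\mathfrak{F}$, where $\Psi|_{q=1}$ is a scalar multiple of a power of $\Delta$ at $q=1$, because $[pt]^{*n/D_1}=\mathbf{1}$ at $q=1$ by \eqref{eqn: Omega_per is n-periodic}.

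Next, $\Delta$ is invertible at $q=1$ by semisimplicity (Abrams and \cite{CMP3}). Following the proof of Lemma~\ref{lem: same of cardinality}, multiplication by a suitable power $\Delta^{*k}$ then maps $V_0$ isomorphically onto $V_j$ for each admissible $j$. Since $\mathfrak{F}$ is stable under multiplication by $\Delta$, we get $V_j\subset\mathfrak{F}$. Therefore $\mathfrak{F}=\bigoplus_j V_j$.

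It remains to count. There are $n/D_2$ residues $j\in\{0,\dots,n-1\}$ with $j\equiv 0\pmod{D_2}$. By Lemma~\ref{lem: same of cardinality}, each $V_j$ has dimension $\dim V_0$. Moreover, $\dim V_0=m=\lfloor n/2\rfloor$, since $V_0$ has basis $\Theta_1,\dots,\Theta_m$: namely $\mathbf{1}$ together with the $\sigma_{(n-j,j)}$, $2\le j\le m$. These are exactly the partitions in the $2\times(n-2)$ box of size $0$ or $n$; size $2n$ exceeds $2n-4$. Since $D_2=\gcd(n,4)$, this gives $\dim\mathfrak{F}=\frac{n}{\gcd(4,n)}\lfloor n/2\rfloor$.

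The only real subtlety is justifying that specialization at $q=1$ preserves the basis property. This is handled by noting that all the change-of-basis matrices involved, namely $A_0$ and the powers of $\Delta$ restricted to the $V_j$, have entries that are monomials in $q$ times constants, and that they remain invertible at $q=1$.
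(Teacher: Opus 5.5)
Your proposal is correct and follows essentially the paper's route. Both arguments deduce $\mathfrak{F}=\bigoplus_j V_j$ (over $0\le j\le n-1$, $j\equiv 0 \pmod{D_2}$) from Theorem~\ref{thm: case of Gr(2,n)}, then count $n/D_2$ summands each of dimension $\dim V_0=\lfloor n/2\rfloor$, which the paper writes as $p(0\mid n-2,2)+p(n\mid n-2,2)$.

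Your re-run of the Vandermonde and $\Delta$-translation arguments at $q=1$ is valid but not needed, and the rank-dropping worry does not arise here. Since $\mathfrak{F}_q=W\otimes\mathbb{C}[q,q^{-1}]$ with $W=\bigoplus_j V_j\subset H^*(X)$, and evaluation at $q=1$ is a ring homomorphism mapping $\mathfrak{F}_q$ onto $\mathfrak{F}$, you get $\mathfrak{F}=W$ directly.
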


\begin{proof} Since $\lfloor\frac{2(n-2)}{n}\rfloor=1$ for $n\geq 4$,
   by Theorems \ref{thm: estimation of dimension of Span(Delta^i)} and Theorem \ref{thm: case of Gr(2,n)},
   \[ \dim (\mathfrak{F}) =\frac{n}{\text{gcd}(4,n)}(p(0 \mid n-2,2)+ p(n \mid n-2,2)).\]
    Moreover $p(0 \mid n-2,2)=1$ since it is the number of partitions $\lambda$ with $|\lambda|=0$.
    $p(n \mid n-2,2)$ is the number of partitions $\lambda$ with $|\lambda|=n$ and $\lambda \in \mathcal{P}_{2n}$. All such partitions are given by $(n-i, i)$ for $2 \leq i \leq \lfloor \frac{n}{2} \rfloor$.
    So $p(n \mid n-2,2)=\lfloor\frac{n}{2}\rfloor-1$. The corollary is thus proved.
\end{proof}

In contrast, for $X= \text{Gr}(2,n)$, $\dim H^*(X) = \frac{1}{2} n (n-1)$.
It follows that $\lim_{n \rightarrow \infty} D_2 \frac{\dim (\mathfrak{F})}{\dim H^*(X)} = 1$.
So for large $n$, $\frac{\dim (\mathfrak{F})}{\dim H^*(X)}$ is approximately equal to $1/D_2 = 1/{\text{gcd}(4,n)}$.

\section{Fano complete intersection}\label{sec: fano complete intersection}

The handle element $\Delta$ for Fano complete intersections was computed by Cela in \cite{Ce}, which solves a conjecture
by Buch and Pandharipande in \cite{BuP}. In these papers, $\Delta$ was called the quantum Euler class.
In this section, we use Cela's formula for $\Delta$ to study complexity of quantum cohomology of Fano complete intersections. In particular, we will prove the following result for arbitrary reference state:
\begin{theorem} \label{thm:ComplexityFCI}
Let $X \subset \mathbb{P}^{r+L}$ be a Fano complete intersection of dimension bigger than $2$ whose total degree (i.e. $|\textbf{m}|$ defined below) is strictly less than $r+L$, then $\mathfrak{S}_\infty$ is empty.
If the total degree of $X$ is equal to $r+L$, then  $\mathfrak{S}_\infty$ contains at most 2 points.
\end{theorem}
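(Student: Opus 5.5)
We will use Cela's formula for $\Delta$ on a Fano complete intersection $X\subset\mathbb{P}^{r+L}$ of multidegree $\mathbf{m}=(m_1,\ldots,m_L)$ and dimension $r>2$. Write $H$ for the hyperplane class, and let $H^*(X)=H^*_{\rm amb}\oplus H^*_{\rm prim}$, where the ambient part is spanned by $\mathbf{1},H,\ldots,H^r$ and the primitive part lies in middle degree. The quantum parameter has degree $\tau=r+L+1-|\mathbf{m}|$. Cela's formula expresses $\Delta$ as a polynomial in $H$ with $\mathbb{C}[q]$ coefficients. Quantum multiplication by $H$ preserves $H^*_{\rm amb}$ and kills $H^*_{\rm prim}$ up to terms of the form $q\cdot(\text{ambient})$; in the Fano range with $r>2$ the primitive classes only multiply by $\mathbf{1}$ and by classical terms. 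Hence $\Delta*$ is block upper triangular with respect to this decomposition. On the primitive block it acts by the constant term $\Delta_0$ of $\Delta$ as a polynomial in $H$ (essentially the Euler characteristic coefficient of $[pt]$ contributes nothing there, leaving a scalar $c\,q^{\dim X/\tau}$ when this degree is integral). On the ambient block it acts by multiplication by $\Delta(H)$ in the quotient ring $\mathbb{C}[H]/(\text{quantum relation})$. We set $q=1$ throughout.

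The first step is to determine the eigenvalues of $\Delta*$. The small quantum ring on the ambient part is $\mathbb{C}[H]/(H^{r+1}-c_{\mathbf{m}}H^{r+1-\tau'})$, with the classical relation modified by $q$ following Givental/Beauville. When $|\mathbf{m}|<r+L$ (so $\tau\ge2$), one has $H^{*(r+1)}=\mathbf{m}^{\mathbf{m}}q\,H^{*(r+1-\tau)}$. The ambient ring is then a product of a nilpotent local piece at $H=0$ and $\tau$ points $H=\zeta\,(\mathbf{m}^{\mathbf{m}})^{1/\tau}$, with $\zeta$ ranging over the $\tau$-th roots of unity. Evaluating Cela's $\Delta$ at these points and at the nilpotent piece gives the eigenvalues. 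Homogeneity forces $\Delta(\zeta h)=\zeta^{\dim X}\Delta(h)$, so the nonzero eigenvalues at the semisimple points all have the same modulus, and they differ by roots of unity. The eigenvalue on the nilpotent/primitive part is a different number $\Delta_0$.

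The second step runs the same argument as Theorem~\ref{thm: main thm for finiteness}, but with Jordan blocks allowed; this is where the appendix result on matrices with real-or-root-of-unity-related spectra is meant to be used. Decompose $S_0=[x]$ into generalized eigencomponents and compare the growth rates $|\lambda|^k k^{j}$. The projective limits of $\Delta^{*k}*S_0$ are governed by the components of maximal growth. If the maximal growth comes from a single eigenvalue with a nontrivial Jordan block, or from a single eigenvalue alone, the orbit converges to one point. If it comes from eigenvalues $\lambda_i=\zeta_i\rho$ of equal modulus, then for the limit set to be infinite one needs a non-root-of-unity ratio. Here all ratios are roots of unity, so the orbit is eventually periodic in the dominant part and its limit points are finitely many.

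To get the sharp counts (empty versus at most $2$), I would check carefully which eigenvalues actually dominate. In the case $|\mathbf{m}|<r+L$ I expect $|\Delta_0|$ to be strictly smaller than the semisimple modulus. I also expect $\Delta$ restricted to the semisimple ambient points to act on $\mathbb{P}$ with ratios $\zeta^{\dim X}$ that make the dominant projective state move periodically through finitely many exact orbit points. Those states already have finite complexity, which leaves $\mathfrak{S}_\infty$ empty. In the case $|\mathbf{m}|=r+L$ ($\tau=1$) there is only one semisimple point, $H=\mathbf{m}^{\mathbf{m}}$, together with a nilpotent part, and I expect the eigenvalue there to be real with $\Delta_0$ possibly equal to its negative. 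This sign alternation would give at most $2$ limit points, mirroring the quadric case ($\mathbb{Q}^r$ has $|\mathbf{m}|=2$, $L=1$, $\tau=r$, which does not apply here). The main obstacle is this comparison of moduli, $|\Delta_0|<\rho$ or the precise sign relation, which has to be extracted explicitly from Cela's coefficients, together with controlling possible Jordan blocks on the non-semisimple part. For the latter I would show that the nilpotent part has strictly smaller eigenvalue modulus, so that its Jordan blocks are irrelevant to the limits.
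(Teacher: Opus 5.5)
\textbf{The case $|\mathbf m|<r+L$ has a genuine gap.}

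Grant that the eigenvalue $\Delta_0$ of $\Delta*$ away from the $\tau$ semisimple points is strictly smaller in modulus than $\rho$. This only shows that the limit points of $[\Delta^{*k}*z]$ lie among the finitely many states $[\sum_\zeta \zeta^{rk}z_\zeta]$, where $z_\zeta$ are the semisimple components of $z$. It does not make those states orbit points. If $\Delta_0\neq 0$, the orbit approaches them without reaching them, and they land in $\mathfrak S_\infty$.

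This really happens inside the range $|\mathbf m|<r+L$. For the quadric $\mathbb Q^r$ we have $|\mathbf m|=2<r+1$ and eigenvalues $2r$ and $-2\delta$. Section~\ref{subsect: even quadrics} shows $\mathfrak S_\infty=\{[x_1]\}\neq\emptyset$ whenever $r>\delta$ and $x_1,x_2\neq0$.

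What you need is that $\Delta*$ is nilpotent off the semisimple points, i.e.\ that the eigenvalue there is exactly $0$. This rests on two precise facts you left as expectations:
\begin{enumerate}
\item Cela's formula $\Delta=\mathbf m^{-1}\chi(X)H^{*r}+(\tau-\chi(X))\mathbf m^{\mathbf m-1}qH^{*\kappa}$ has $\kappa=|\mathbf m|-L-1\geq 1$ once quadrics are excluded. So $\Delta$ has no constant term, and there is no scalar $c\,q^{\dim X/\tau}$.
\item $H*A=0$ for every primitive class $A$ (Corollary 5.3 of Buch--Pandharipande), so $\Delta$ annihilates the primitive part.
\end{enumerate}

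With these, the paper argues directly. Givental's relation gives $\Delta*H^{*i}=\tau\mathbf m^{-1}H^{*(r+i)}$ for $i\geq1$. Using $\kappa\geq 1$, this yields $\Delta^{*j}=(\tau\mathbf m^{-1})^jH^{*jr}$ for $j\geq2$. At $q=1$ one has $[H^{*(n+\tau)}]=[H^{*n}]$ for $n>\kappa$. Hence $\{[\Delta^{*j}]\}$ is finite, so $\{[\Delta^{*j}*z]\}$ is finite for every $z$, and $\mathfrak S_\infty=\emptyset$. In your language: after finitely many steps the orbit lies exactly in the semisimple part, where it is periodic with period dividing $\tau$. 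Emptiness genuinely requires $\kappa\geq 1$, so the quadrics ($|\mathbf m|=L+1$) must be set aside, as the paper does.

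\textbf{The case $|\mathbf m|=r+L$: conclusion right, obstacle misidentified.}

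Your Step~2 mechanism is exactly the paper's Theorem~\ref{thm:RealEigenvalue}. But the obstacle you single out is not one. Reality of the eigenvalues alone gives at most two limit points: the dominant eigenvalues are $\pm\lambda$, and normalizing by $\lambda^{-k}\binom{k}{r-1}^{-1}$ takes care of Jordan blocks. No comparison of moduli is needed.

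Your plan to show that the non-semisimple part has strictly smaller modulus is also not established by anything in the paper. The paper does not exclude $\alpha=\beta$ (the case $\omega=0$ in Proposition~\ref{prop:dimFCI}). There both pieces carry the same eigenvalue, with a Jordan block of size at least $r$.

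Your model of the ring is off here as well. The relation is $\widehat H^{*(r+1)}=\mathbf m^{\mathbf m}q\,\widehat H^{*r}$ with $\widehat H=H+\mathbf m!\,q\mathbf 1$. So the simple point is $\widehat H=\mathbf m^{\mathbf m}$, with eigenvalue $\alpha$ on $\widehat H^{*r}$. The local piece sits at $\widehat H=0$, with eigenvalue $\beta=\zeta(X)(\mathbf m!)^{r-1}$, not at $H=0$. Consequently the constant term of $\Delta$ as a polynomial in $H$ is not the relevant eigenvalue.

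You also do not need to know how $H$ acts on primitive classes. Since $\Delta\in H^*(X)^{\rm res}$, the orbit of $z$ stays in $z*H^*(X)^{\rm res}$. This space is the image of $H^*(X)^{\rm res}$ under the $\Delta$-equivariant map $y\mapsto z*y$. So its eigenvalues lie in $\{\alpha,\beta\}\subset\mathbb R$ (the paper checks this via Jordan blocks), and Theorem~\ref{thm:RealEigenvalue} applies to it directly.
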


\noindent
This theorem follows from a combination of Propositions \ref{prop: m<r+L finite states}, \ref{prop: m=r+L hprim=0 finite states}, \ref{prop: Sinfty for last case}, and remarks following these results.

\subsection{Quantum Cohomology of complete intersections}

We first recall some basic facts about quantum cohomology of Fano complete intersections.
We will follow the notations in \cite{BuP} and \cite{Ce}.

Let $X \subset \mathbb{P}^{r+L}$ be an $r$-dimensional smooth {\it complete intersection}
which can be defined as the
common zero locus
of $L$ homogeneous polynomials with degrees $\textbf{m}:=(m_1,m_2,\cdots,m_L)$.
If one of the degrees $m_i=1$, $X$ can be embedded into a lower dimensional projective space. Hence we will assume
that $m_i\geq 2$ for all $i$ and $X$ is not contained in a proper projective subspace of $\mathbb{P}^{r+L}$ . We will also assume $\dim(X)=r \geq 3$, which implies $H_2(X, \mathbb{Z}) \cong \mathbb{Z}$.
We will use the following notations

\[|\textbf{m}| :=\sum_{i=1}^L m_i,  \hspace{10pt}
\textbf{m}^{a\textbf{m}+b} :=\prod_{i=1}^L m_i^{am_i+b},  \hspace{10pt}
\textbf{m}! :=\prod_{i=1}^Lm_i!,\]

\noindent where $a,b\in \mathbb{Z}$.
We will assume $X$ is {\it Fano}, which is equivalent to assume $|\textbf{m}|\leq r+L$.

The vector space $H^*(X)$ has a decomposition
\[ H^*(X) = H^*(X)^{\text{res}} \oplus H^r(X)^{\text{prim}}, \]
where
$H^r(X)^{\text{prim}}$ is the set of all  cohomology classes of real degree $r$ which are annihilated
by the ordinary cup product with the hyperplane class $H \in H^2(X)$,
and $H^*(X)^{\text{res}} = {\rm Span} \{\mathbf{1}, H, \ldots, H^r\}$ is the image of the restriction map from $H^*(\mathbb{P}^{r+L})$ to $H^*(X)$.
$H^*(X)^{\text{res}}$ is called the {\it restricted part} (or ambient part) of $H^*(X)$,
and $H^r(X)^{\text{prim}}$ is called the {\it primitive part}.
From the above decomposition, we have
\begin{equation}\label{eq: dim of prim cohomology}
    \text{dim}H^r(X)^{\text{prim}}=(-1)^r(\chi(X)-(r+1)),
\end{equation}
where $\chi(X) $ is the Euler characteristic number of $X$.

The (complex) degree of the quantum parameter in $QH^*(X)= H^*(X) \otimes \mathbb{C}[q]$ is given
by (see, for example, Section 5.2 in \cite{BuP})
\begin{equation} \label{eqn:tauCIS}
 \tau := \deg(q)= r+L+1-|\textbf{m}|.
\end{equation}
Let $H^i$ and $H^{*i}$ be the $i$-th  powers of $H$ with respect to cup and quantum product respectively.
Define
    \[QH^*(X)^{\text{res}}:=\text{Span}_{\mathbb{C}}\{1,H,\cdots,H^r\}\otimes \mathbb{C}[q].\]
By a result of Graber, this space is closed under quantum multiplication (see Proposition 5.1 in \cite{BuP}).
By a result of Givental (\cite{G}), if $|\textbf{m}|\leq r+L-1$,
    \begin{equation}\label{eq: formula for H*(r+1) in |m| leq r+L-1}
        H^{*(r+1)}=\textbf{m}^{\textbf{m}}q H^{*(|\textbf{m}|-L)}.
    \end{equation}
If $|\textbf{m}|=r+L$,
    \begin{equation}\label{eq: formula for (H+m!q)*(r+1) in m=r+L}
        \widehat{H}^{*(r+1)} = \textbf{m}^{\textbf{m}} q \widehat{H}^{*r},
    \end{equation}
where $\widehat{H}:= H+\textbf{m}!q \mathbf{1}$.
By Corollary 5.3 in \cite{BuP}, if $|\textbf{m}| \leq r+L-1$, then
\begin{equation} \label{eqn:DeltaPrim=0}
 H * A = 0
\end{equation}
for all $A \in H^r(X)^{\text{prim}}$.
The following formulas for the handle element $\Delta$ (called the quantum Euler class in \cite{Ce})  were proved by Cela.
\begin{theorem}(Theorem 5 and Lemma 18 in \cite{Ce}) \\
\noindent
    If $|\textbf{m}|\leq r+L-1$, then
    \begin{equation}\label{eq: formula for Delta in |m| leq r+L-1}
        \Delta=\textbf{m}^{-1}\chi(X)H^{*r}+(\tau -\chi(X))\textbf{m}^{\textbf{m}-1}qH^{*(|\textbf{m}|-L-1)}.
    \end{equation}
    If $|\textbf{m}|=r+L$, then
    \begin{equation}
        \Delta=\textbf{m}^{-1}\chi(X) \widehat{H}^{*r}
           +\sum_{j=1}^{r} \left( \zeta(X)- \delta_{j1} \textbf{m}^{\textbf{m}-1}r \right) (\textbf{m}!)^{j-1} q^j \widehat{H}^{*(r-j)},
                                \label{eq: form of delta in H+m!q}
    \end{equation}
   where $\zeta(X):=\textbf{m}^{-1}(r+1-\chi(X))(\textbf{m}^{\textbf{m}}-\textbf{m}!)$.
\end{theorem}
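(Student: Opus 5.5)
The plan is to compute $\Delta$ directly from equation \eqref{eq:DeltaHdual}. I would choose a basis of $H^*(X)$ adapted to the decomposition $H^*(X)=H^*(X)^{\rm res}\oplus H^r(X)^{\rm prim}$, which is orthogonal for the Poincar\'e pairing. On the restricted part I take $\{H^i\}_{0\le i\le r}$ (cup powers). Since $\int_X H^i\cdot H^{r-i}=\deg X=\prod m_i$, its dual basis is $\{\textbf{m}^{-1}H^{r-i}\}$. On the primitive part I take any basis $\{A_a\}$ with dual basis $\{A^a\}$. Then
\[
\Delta=\textbf{m}^{-1}\sum_{i=0}^r H^i*H^{r-i}+\sum_a A_a*A^a ,
\]
and the two sums are treated separately. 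Each is finally rewritten in terms of $H^{*j}$ (respectively $\widehat H^{*j}$), using Graber's result that $QH^*(X)^{\rm res}$ is closed under $*$.

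\emph{Restricted sum.} First I express each cup power $H^i$ as a polynomial in quantum powers of $H$ with $q$-corrections, using degree counting with $\deg q=\tau$. For $i<\tau$ there is no correction, so $H^i=H^{*i}$. In general the corrections are governed by Givental's relation: \eqref{eq: formula for H*(r+1) in |m| leq r+L-1}, or \eqref{eq: formula for (H+m!q)*(r+1) in m=r+L} in the boundary case. These corrections are obtained from the small $J$-function, i.e. the known two-point invariants $\langle H^a,H^b\rangle$ of $X$. Substituting, the sum $\sum_i H^i*H^{r-i}$ becomes $(r+1)H^{*r}$ plus explicit $q$-terms. Reducing with Givental's relation should give a combination of $H^{*r}$ and $qH^{*(|\textbf{m}|-L-1)}$, where the coefficient $\tau$ arises from counting the corrected terms. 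In the case $|\textbf{m}|=r+L$ one works instead in powers of $\widehat H$; the shift $\textbf{m}!q$ comes from the degree-one correction $H*H^{r}\ni \textbf{m}!\,q\,H^r$.

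\emph{Primitive sum.} For primitive $A,B$, the product $A*B$ lies in the restricted part and is determined by the pairings $\langle A*B,H^j\rangle=\langle A,B*H^j\rangle$. When $|\textbf{m}|\le r+L-1$, equation \eqref{eqn:DeltaPrim=0} kills every term with $j\ge1$. Hence $A*B=(\int A B)\,\rho$, where $\rho$ is the restricted class pairing to $\delta_{j0}$ against $H^{*j}$. Summing over the dual bases then gives $\sum_a A_a*A^a=\dim H^r(X)^{\rm prim}\cdot\rho=(-1)^r(\chi(X)-r-1)\rho$ by \eqref{eq: dim of prim cohomology}. Writing $\rho$ as $\textbf{m}^{-1}(H^{*r}-\textbf{m}^{\textbf{m}}qH^{*(|\textbf{m}|-L-1)})$ via Givental's relation, and adding the restricted sum, should produce exactly $\textbf{m}^{-1}\chi H^{*r}+(\tau-\chi)\textbf{m}^{\textbf{m}-1}qH^{*(|\textbf{m}|-L-1)}$.

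The main obstacle is the boundary case $|\textbf{m}|=r+L$. There $\tau=1$ and $H*A=c\,q\,A$ for primitive $A$, with the constant $c$ expected to be $-\textbf{m}!$. I would first establish $\widehat H*A=0$ by computing the two-point/line invariant $\langle H,A,B\rangle_1$ from Givental's $I$-function, or by a degeneration/monodromy argument. Given that, the primitive sum is forced to be a multiple of the class pairing to $\delta_{j0}$ against $\widehat H^{*j}$. Inverting the Vandermonde-type relation coming from \eqref{eq: formula for (H+m!q)*(r+1) in m=r+L} then gives the geometric sum $\sum_j(\textbf{m}!)^{j-1}q^j\widehat H^{*(r-j)}$ with coefficient $\zeta(X)$. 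The extra $-\textbf{m}^{\textbf{m}-1}r$ at $j=1$ comes from the restricted sum.
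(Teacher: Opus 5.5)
The paper gives no proof of this statement; it is quoted from Cela. So I compare your outline with what it actually produces.

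\textbf{The case $|\mathbf{m}|\le r+L-1$.} The decomposition into restricted and primitive parts is right, but three steps need repair.

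First, you never compute the restricted sum, and converting cup powers to quantum powers through the $J$-function is unnecessary. Since $H^*(X)^{\rm res}\perp H^r(X)^{\rm prim}$ and $QH^*(X)^{\rm res}$ is a subalgebra, the restricted part of $\Delta$ is basis independent, so you may use $\{H^{*j}\}_{0\le j\le r}$. Because $\int_X H^{*j}=\mathbf{m}\,\delta_{jr}$ for $j\le r$, the form $u\mapsto\int_X u$ on $QH^*(X)^{\rm res}\cong\mathbb{C}[q][x]/(f)$, with $f(x)=x^{r+1}-\mathbf{m}^{\mathbf{m}}q\,x^{|\mathbf{m}|-L}$, is $\mathbf{m}$ times the coefficient of $x^r$ in reduced form. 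Write $(f(x)-f(y))/(x-y)=\sum_i x^i b_i(y)$; then the $\mathbf{m}^{-1}b_i$ are dual to the $x^i$. This gives the restricted contribution $\mathbf{m}^{-1}f'(H)=\mathbf{m}^{-1}\bigl((r+1)H^{*r}-(|\mathbf{m}|-L)\mathbf{m}^{\mathbf{m}}qH^{*(|\mathbf{m}|-L-1)}\bigr)$.

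Second, the claim that the primitive sum is restricted needs a monodromy argument: its primitive component is a monodromy-invariant vector in $H^r(X)^{\rm prim}$, hence zero.

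Third, there is a sign error. With the paper's convention $g(\check e_i,e_j)=\delta_{ij}$ and graded symmetry, $g(A_a,\check A_a)=(-1)^r$. So the primitive sum is $(\chi(X)-r-1)\rho$, not $\dim H^r(X)^{\rm prim}\cdot\rho$, and your version is wrong for odd $r$.

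With these fixes your $\rho$ is correct and the sum is exactly the first display. Also, $H*H^r$ has no $qH^r$ term, since that coefficient is $\langle H,H^r,\mathbf 1\rangle_1=0$; the shift by $\mathbf{m}!q$ comes from the mirror map.

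\textbf{The case $|\mathbf{m}|=r+L$.} Here there is a genuine gap.

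The $I$-function involves only ambient insertions, so it cannot compute $\langle A,B\rangle_1$. Monodromy gives $H*A=cqA$, and Givental's relation applied to $A$ only gives $c\in\{-\mathbf{m}!,\ \mathbf{m}^{\mathbf{m}}-\mathbf{m}!\}$. So $\widehat H*A=0$ needs genuinely new input.

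Worse, even granting $\widehat H*A=0$, your last step is false. Since $\int_X\widehat H^{*j}=\mathbf{m}\,\delta_{jr}$ for $j\le r$ and $\widehat H^{*(r+s)}=(\mathbf{m}^{\mathbf{m}}q)^s\widehat H^{*r}$, the class pairing to $\delta_{j0}$ against $\widehat H^{*j}$ is $\mathbf{m}^{-1}\bigl(\widehat H^{*r}-\mathbf{m}^{\mathbf{m}}q\widehat H^{*(r-1)}\bigr)$. No geometric series appears. Adding the restricted part $\mathbf{m}^{-1}f'(\widehat H)=\mathbf{m}^{-1}\bigl((r+1)\widehat H^{*r}-r\,\mathbf{m}^{\mathbf{m}}q\widehat H^{*(r-1)}\bigr)$, your method yields
\[
\Delta=\mathbf{m}^{-1}\chi(X)\,\widehat H^{*r}-(\chi(X)-1)\,\mathbf{m}^{\mathbf{m}-1}q\,\widehat H^{*(r-1)},
\]
which agrees with the second display only when $\chi(X)=r+1$.

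The display as printed equals $\mathbf{m}^{-1}f'(\widehat H)+(\chi(X)-r-1)\rho'$, where $g(\rho',H^{*j})=\delta_{j0}$. That is what $H*A=0$ would give, and $H*A=0$ contradicts Givental's relation applied to $A$.

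In fact the printed display cannot hold when $\chi(X)\neq r+1$. For even $x,y$ one has $g(\Delta*x,y)=\mathrm{str}(L_{x*y})$, so $\Delta$ kills every even nilpotent element. The element $\widehat H*(\widehat H-\mathbf{m}^{\mathbf{m}}q)$ is nilpotent. Hence any expression $\Delta=\sum_{k\le r}p_k\widehat H^{*k}$ must have $p_k=0$ for $k\le r-2$. The printed formula instead has the coefficient $\zeta(X)(\mathbf{m}!)^{r-1}q^r\neq0$ on $\mathbf 1$. You should check this case against Cela's original statement rather than steer the computation toward the display.
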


Note that since $\textbf{m}=(m_1, \ldots, m_L)$ with all $m_i \geq 2$, we have $\textbf{m}^{\textbf{m}}-\textbf{m}!>0$. Hence $\zeta(X)=0$ if and only if $\chi(X)=r+1$. By Equation \eqref{eq: dim of prim cohomology}, this is equivalent to $H^r(X)^{\text{prim}}=0$.

\begin{remark}\label{rm: semisimp of fci}
     By equations \eqref{eqn:DeltaPrim=0} and \eqref{eq: formula for Delta in |m| leq r+L-1}, if
    $L+1 < |\textbf{m}| \leq r+L-1$, then quantum multiplication by $\Delta$ annihilates primitive classes. So $\Delta$ is not invertible. Hence in this case the small quantum cohomology of $X$ is not semisimple by Abrams' theorem in \cite{A}.
    By a result in \cite{Hu}, there are also many examples of Fano complete intersections whose restricted part of
    the big quantum cohomology are not semisimple.
\end{remark}

\subsection{Case of $|\textbf{m}|\leq r+L-1$}

By assumption, $m_i \geq 2$ for $1 \leq i \leq L$. Hence $|\textbf{m}| \geq L+1$. If
$|\textbf{m}| = L+1$, then L=1 and $\textbf{m}=(2)$. So $X$ is a quadric, which has been discussed in
Section \ref{subsect:  even quadrics}.
Therefore we can assume $|\textbf{m}| > L+1$.
In this case, $2 \leq \tau < r$.

By equation \eqref{eq: formula for Delta in |m| leq r+L-1},
$\Delta = a_1 H^{*r} + a_2 q H^{*\kappa}, $
where
\[
a_1 := \textbf{m}^{-1}\chi(X), \hspace{10pt}
a_2 := (\tau -\chi(X))\textbf{m}^{\textbf{m}-1}, \hspace{10pt}
\kappa := |\textbf{m}|-L-1 \geq 1.\]
By equation \eqref{eqn:DeltaPrim=0}, quantum multiplication by $\Delta$ annihilates all primitive classes.
So we only need to consider the action of $\Delta$ on restricted part.

By equation \eqref{eq: formula for H*(r+1) in |m| leq r+L-1},
\begin{equation} \label{eqn:Hrtau}
H^{*(r+i)} = \textbf{m}^{\textbf{m}} q H^{*(\kappa+i)}
\end{equation}
 for all $i \geq 1$.
This implies
\[
\Delta * H^{*i} = (a_1   + a_2 \textbf{m}^{-\textbf{m}}) H^{*(r+i)} = \tau \textbf{m}^{-1} H^{*r} * H^{*i}
\]
for all $i \geq 1$. Repeatedly using this formula, we obtain
\begin{equation} \label{eqn:DeltaPowerCIS1}
 \Delta^{*j} = \left( \tau \textbf{m}^{-1} \right)^j  H^{*j r}
\end{equation}
for all $j \geq 2$.

Now we set $q=1$ and consider the complexity with reference state $S_0=[\mathbf{1}] \in \mathbb{P}H^*(X)$.
The following result gives a precise description for states
with finite complexity.
\begin{proposition}\label{prop: m<r+L finite states}
    If $|\textbf{m}|\leq r+L-1$, the set of states in $\mathbb{P}H^*(X)$ with finite complexity is a finite set which is
    equal to
    \[F :=\{[\mathbf{1}],[\Delta]\}\cup \left\{ [H^{*jd}] \,\, |\,\,\, \kappa/d < j \leq  r/d \right\},\]
     where $d= {\rm gcd}(r,\tau)$. In particular, $\mathfrak{S}_\infty$ is an empty set.
\end{proposition}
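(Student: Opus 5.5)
The plan is to list the orbit $\{[\Delta^{*k}] \mid k \geq 0\}$ explicitly, using equation \eqref{eqn:DeltaPowerCIS1} together with a periodicity of quantum powers of $H$ at $q=1$. At $q=1$ we have $\Delta^{*0}=\mathbf{1}$ and $\Delta^{*1}=\Delta$. For $j\geq 2$, equation \eqref{eqn:DeltaPowerCIS1} gives $[\Delta^{*j}]=[H^{*jr}]$, because $\tau\textbf{m}^{-1}\neq 0$. Everything therefore reduces to computing $H^{*N}$ for large $N$ at $q=1$.

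First, iterating \eqref{eqn:Hrtau} (with $q=1$), every exponent above $r$ drops by $r-\kappa=\tau$, at the cost of a nonzero scalar $\textbf{m}^{\textbf{m}}$. Concretely, for $N>r$ we write $N=r+i$ with $i\geq 1$ and repeat the reduction until the exponent lies in $(\kappa,r]$. Here $r-\kappa=r+L+1-|\textbf{m}|=\tau$ by \eqref{eqn:tauCIS}. So for every $N>\kappa$ we get $H^{*N}=c\,H^{*N'}$, where $c\neq 0$ and $N'$ is the unique integer in $(\kappa,r]$ with $N'\equiv N \pmod{\tau}$. Since $N'\leq r$, we also have $H^{*N'}=H^{N'}$, because the quantum and classical powers agree below degree $r+1$ by Givental's relation. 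The classes $H^{0},\dots,H^{r}$ are nonzero and linearly independent in $H^*(X)$, so distinct $N'\in(\kappa,r]$ give distinct states.

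Next, we apply this with $N=jr$ for $j\geq 2$; note $jr>r>\kappa$. The states $[H^{*jr}]$ are exactly the $[H^{N'}]$ with $N'\in(\kappa,r]$ and $N'\equiv jr \pmod\tau$. As $j$ ranges over integers $\geq 2$, the residues $jr \bmod \tau$ cover exactly the multiples of $d=\gcd(r,\tau)$ modulo $\tau$. The interval $(\kappa,r]$ has length exactly $\tau$, so each residue class has exactly one representative in it. Hence the set $\{[\Delta^{*j}]\mid j\geq 2\}$ equals $\{[H^{jd}] \mid \kappa/d<j\leq r/d\}$, which is the third piece of $F$. Adding $[\mathbf{1}]$ and $[\Delta]$ gives exactly $F$, which is finite.

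Finally, $\mathfrak{S}_\infty$ lies in the closure of the set of states with finite complexity, and that set is the finite set $F$. A finite set is closed, so every point of its closure has finite complexity, and therefore $\mathfrak{S}_\infty=\emptyset$. The main point needing care is the second step: checking that the reduction from \eqref{eqn:Hrtau} always lands in the window $(\kappa,r]$, whose length is exactly $\tau$, and that no scalar factor vanishes. Both follow from $\textbf{m}^{\textbf{m}}\neq 0$ and the identity $r-\kappa=\tau$. It is unnecessary to check whether $[\Delta]$ or $[\mathbf{1}]$ coincides with one of the $[H^{jd}]$, since the statement is a set equality.
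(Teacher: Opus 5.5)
Your overall route is the paper's. You reduce $[\Delta^{*j}]$ to $[H^{*jr}]$ via \eqref{eqn:DeltaPowerCIS1}, push exponents into the window $(\kappa,r]$ using \eqref{eqn:Hrtau} with step $r-\kappa=\tau$, identify the residues reached as the multiples of $d$, and conclude from finiteness.

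\textbf{The false step.} You claim $H^{*N'}=H^{N'}$ for $N'\le r$ ``by Givental's relation.'' Givental's relation only rewrites $H^{*(r+1)}$ in terms of $H^{*(|\textbf{m}|-L)}$. It says nothing about lower quantum powers, and these generally do receive quantum corrections.
\begin{itemize}
\item Take the cubic threefold: $r=3$, $L=1$, $\textbf{m}=(3)$, $\tau=2$, $\kappa=1$, $d=1$, which is within the scope of the proposition. Then $H*H=H^2+\langle H,H,[pt]\rangle_1\,q\mathbf{1}=H^2+6q\mathbf{1}$. By the divisor axiom this invariant counts the six lines through a general point.
\item Compare also $H^{*r}=H^r+2q\mathbf{1}$ for quadrics.
\item Consequently your final sentence, identifying $\{[\Delta^{*j}]\mid j\ge 2\}$ with the cup-power states $\{[H^{jd}]\}$, is wrong as written. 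At $q=1$ the state $[H^{*2}]=[H^2+6\cdot\mathbf{1}]$ is not $[H^2]$, whereas $F$ is defined with the quantum powers $[H^{*jd}]$.
\end{itemize}

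\textbf{The repair.} Never pass to cup powers. \eqref{eqn:Hrtau} already gives $[H^{*jr}]=[H^{*N'}]$ with $N'\in(\kappa,r]$ and $N'\equiv jr \pmod{\tau}$, which is exactly what the paper uses. The one fact you actually needed from the false identity is that $H^{*N'}\neq 0$ at $q=1$, so that these states and $[\Delta^{*j}]$ are defined. This follows from degree counting instead:
\begin{itemize}
\item Inductively, $H^{*i}=H^i+\sum_{k\ge 1}q^k\gamma_{i,k}$ with $\gamma_{i,k}\in H^{2(i-k\tau)}(X)$.
\item So at $q=1$ the $H^{2i}(X)$-component of $H^{*i}$ is $H^i\neq 0$ for $i\le r$.
\item The same triangularity gives the linear independence you mention, though a set equality does not need it.
\end{itemize}

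With this change your argument is correct. Your residue-class formulation says the values $jr \bmod \tau$ for $j\ge 2$ are exactly the multiples of $d$ modulo $\tau$, each with a unique representative in the length-$\tau$ window. This is equivalent to the paper's B\'ezout step $ar+b\tau=d$. It has the small advantage of making explicit that one may take $j\ge 2$, which the paper needs implicitly when it writes $[H^{*jd}]=[H^{*(ja)r}]\in F'$.
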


\begin{proof}
By equation \eqref{eqn:DeltaPowerCIS1}, $[\Delta^{*j}] = [H^{*j r}] \in \mathbb{P}H^*(X)$ for all $j \geq 2$.
So the set of all states with finite complexity is
\[ F' :=\{ [\mathbf{1}], [\Delta], [H^{*j r}] \mid j \geq 2\}.\]
Since  $1 \leq \kappa= r-\tau \leq r-2$, by equation \eqref{eqn:Hrtau},
if $n = s \tau+j$ for any $s \geq  0$ and $\kappa+1 \leq j \leq r$, then
\begin{equation} \label{eqn:Hntaui}
[H^{*n}] = [H^{*j}] \in \mathbb{P}H^*(X).
\end{equation}
For any $l \geq 2$, we can find $s \geq  0$ and $\kappa < j \leq r$ such that $l r = s \tau +j$.
Since $d$ divides both $\tau$ and $r$, it also divides $j$. So $j = id$ for some integer $\kappa/d < i \leq r/d$.
By equation \eqref{eqn:Hntaui},
$[H^{*lr}] = [H^{*id}] \in F$. Hence $F' \subset F$.

On the other hand, since $d= {\rm gcd}(r, \tau)$, we can find integers $a > 0$ and $b < 0$ such that
$ar+b\tau=d$.
For any integer $\kappa/d < j \leq r/d$, $jar=(-jb)\tau + jd$. So by equation \eqref{eqn:Hntaui},
$[H^{*jd}] = [H^{*(ja)r}] \in F'$. This shows that $F \subset F'$. Hence we have $F'=F$.
The proposition is thus proved.
\end{proof}

\begin{remark} \label{rem:M<LR-1}
A direct consequence of the above Proposition is that if $\tau \neq \chi(X)$, then $\dim(\mathfrak{F})= 2+ \tau/d$ and
$\mathfrak{F} ={\rm Span} \{\mathbf{1}, H^{*jd} \mid \kappa/d \leq j \leq  r/d \}$ .
If $\tau=\chi(X)$, then $\dim(\mathfrak{F})= 1 + \tau/d$ and
$\mathfrak{F} ={\rm Span} \{\mathbf{1}, H^{*jd} \mid \kappa/d < j \leq  r/d \}$.
Moreover,  if $S_0=[z]$ is an arbitrary reference state, then the set of states with finite complexity is
$\{ [x*z]  \mid  [x] \in F, \,\, x * z \neq 0\}$, which is again a finite set. So $\mathfrak{S}_\infty$ is also empty.
\end{remark}

\subsection{Case of $|\textbf{m}|=r+L$}

In this case, $\tau=1$ and we choose $\{1,\widehat{H},\ldots,\widehat{H}^{*r}\}$ as a basis of $QH^*(X)^{\text{res}}$,
where $\widehat{H}=H+\textbf{m}! q \mathbf{1}$.

\subsubsection{Case with $\chi(X) = r+1$}

In this case $\zeta(X)=0$ and $H^r(X)^{\text{prim}}=0$. By equation~\eqref{eq: form of delta in H+m!q},
\[ \Delta=\textbf{m}^{-1} (r+1) \widehat{H}^{*r}
            -  \textbf{m}^{\textbf{m}-1}r  q \widehat{H}^{*(r-1)}. \]
By equation \eqref{eq: formula for (H+m!q)*(r+1) in m=r+L},  we have
$\Delta*\widehat{H}^{*j}
     = \textbf{m}^{j\textbf{m}-1}q^j \widehat{H}^{*r}
$
for all $j\geq1$. Repeatedly using this formula, we have
\begin{align*}
    \Delta^{*j}
    &=\textbf{m}^{(r\textbf{m}-1)(j-1)-1} q^{(j-1)r} \widehat{H}^{*r}
\end{align*}
for all $j \geq 2$. Hence we have

\begin{proposition}\label{prop: m=r+L hprim=0 finite states}
Set $q=1$ and consider the complexity with reference state $S_0 = [\mathbf{1}]$.
    If $|\textbf{m}|=r+L$ and $\chi(X)=r+1$, then the set of states with finite complexity is
$F = \{[\mathbf{1}], \,\, [\Delta], \,\, [\widehat{H}^{*r}]\}$. Moreover, $\mathfrak{S}_\infty$ is empty,
$\mathfrak{F}={\rm Span}\{\mathbf{1}, \,\, \widehat{H}^{*(r-1)}, \,\, \widehat{H}^{*r}\}$  has dimension equal to 3.
\end{proposition}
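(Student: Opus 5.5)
The plan is to read everything off the two displayed formulas preceding the statement, after specializing $q=1$. Since $\chi(X)=r+1$, equation \eqref{eq: dim of prim cohomology} gives $H^r(X)^{\rm prim}=0$, so $H^*(X)=H^*(X)^{\rm res}$ has basis $\{\mathbf{1},\widehat{H},\ldots,\widehat{H}^{*r}\}$. This is a basis because $\widehat{H}^{*j}=H^j+(\text{lower-degree terms})$ by degree counting, so the change of basis from $\{H^j\}$ is unitriangular. The whole argument takes place in this basis.

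First, I identify the orbit of $[\mathbf{1}]$. We have $\Delta^{*0}=\mathbf{1}$ and $\Delta^{*1}=\Delta=\textbf{m}^{-1}(r+1)\widehat{H}^{*r}-\textbf{m}^{\textbf{m}-1}r\,\widehat{H}^{*(r-1)}$. For $j\ge 2$, the formula derived just before the proposition gives $\Delta^{*j}=c_j\widehat{H}^{*r}$ with $c_j=\textbf{m}^{(r\textbf{m}-1)(j-1)-1}\neq 0$. Hence every $[\Delta^{*j}]$ with $j\ge2$ equals $[\widehat{H}^{*r}]$. Since the coefficient of $\widehat{H}^{*(r-1)}$ in $\Delta$ is $-\textbf{m}^{\textbf{m}-1}r\neq0$, the three states $[\mathbf{1}]$, $[\Delta]$, $[\widehat{H}^{*r}]$ are distinct; here $r\ge 3$ ensures that $\mathbf{1},\widehat{H}^{*(r-1)},\widehat{H}^{*r}$ are independent basis vectors. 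So the set of states with finite complexity is exactly $F$. I would also double-check the formula $\Delta*\widehat{H}^{*j}=\textbf{m}^{j\textbf{m}-1}\widehat{H}^{*r}$ for $j\ge1$. Using \eqref{eq: formula for (H+m!q)*(r+1) in m=r+L} repeatedly, $\widehat{H}^{*(r+j)}=(\textbf{m}^{\textbf{m}})^j\widehat{H}^{*r}$, and the combination $\textbf{m}^{-1}(r+1)\textbf{m}^{j\textbf{m}}-r\textbf{m}^{\textbf{m}-1}\textbf{m}^{(j-1)\textbf{m}}$ simplifies to $\textbf{m}^{j\textbf{m}-1}$.

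Second, $\mathfrak{S}_\infty$ is empty. The set of finite-complexity states is finite, hence closed, so its closure contains no state of infinite complexity.

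Third, $\mathfrak{F}$ is spanned by $\mathbf{1}$, $\Delta$ and $\widehat{H}^{*r}$. Replacing $\Delta$ by $\Delta-\textbf{m}^{-1}(r+1)\widehat{H}^{*r}$, which is a nonzero multiple of $\widehat{H}^{*(r-1)}$, gives the span $\{\mathbf{1},\widehat{H}^{*(r-1)},\widehat{H}^{*r}\}$. These three vectors are linearly independent basis elements, so the dimension is $3$.

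The only step needing care is the nonvanishing of the coefficients, namely $c_j\neq0$ and the $\widehat{H}^{*(r-1)}$-coefficient of $\Delta$. Both are immediate because $m_i\ge2$ and $r\ge3$.
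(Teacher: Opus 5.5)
Your proposal is correct and is essentially the paper's argument: the paper derives $\Delta*\widehat{H}^{*j}=\textbf{m}^{j\textbf{m}-1}q^j\widehat{H}^{*r}$ for $j\geq 1$, hence $\Delta^{*j}=\textbf{m}^{(r\textbf{m}-1)(j-1)-1}q^{(j-1)r}\widehat{H}^{*r}$ for $j\geq 2$, and reads off the finite orbit, the emptiness of $\mathfrak{S}_\infty$, and $\mathfrak{F}$ directly from these formulas. You also make explicit the checks the paper leaves implicit, namely that $\{\mathbf{1},\widehat{H},\ldots,\widehat{H}^{*r}\}$ is a basis of $H^*(X)$ once $H^r(X)^{\rm prim}=0$ and that the relevant coefficients do not vanish.
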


\begin{remark} \label{rem:SHres=0}
If $S_0=[z]$ is an arbitrary reference state, then the set of states with finite complexity is
$\{ [x*z]  \mid [x] \in F, \, \, x * z \neq 0\}$, which is again a finite set. So $\mathfrak{S}_\infty$ is also empty.

\end{remark}

\subsubsection{Case with $\chi(X) \neq r+1$}

In this case $\zeta(X)\neq 0$.
We rewrite equation \eqref{eq: form of delta in H+m!q} as
\begin{equation}
        \Delta=  \beta \mathbf{1}  + \xi \widehat{H}
           +\sum_{j=2}^{r-1} \left( \zeta(X)- \delta_{j,r-1} \textbf{m}^{\textbf{m}-1}r \right) (\textbf{m}!)^{r-j-1} q^{r-j} \widehat{H}^{*j} + \textbf{m}^{-1}\chi(X) \widehat{H}^{*r},
                                \label{eq: delta-beta}
\end{equation}
where $\beta:=\zeta(X)(\textbf{m}!)^{r-1}q^r$ and $\xi := \zeta(X)(\textbf{m}!)^{r-2}q^{r-1}$. We have $\beta \neq 0$ and
$\xi \neq 0$ if $q \neq 0$.

By Corollary 19 in \cite{Ce} and equation \eqref{eq: formula for (H+m!q)*(r+1) in m=r+L}, we have
\begin{align}
    \Delta * \widehat{H}^{*r}
    &=(\textbf{m}^{-1}-\textbf{m}^{-r\textbf{m}-1}(\textbf{m}!)^r(r+1-\chi(X))) \widehat{H}^{*2r}
    = \alpha  \widehat{H}^{*r},
          \label{eq: delta acts on H+m!q^r}
\end{align}
where
\[\alpha :=(\textbf{m}^{-1}-\textbf{m}^{-r\textbf{m}-1}(\textbf{m}!)^r(r+1-\chi(X)))(\textbf{m}^{\textbf{m}} q)^r.\]
We now compute $\Delta*\widehat{H}^{*(r-1)}$.
By equation \eqref{eq: formula for (H+m!q)*(r+1) in m=r+L}, we have
\begin{align}
    \Delta*\widehat{H}^{*(r-1)}
    =&\beta \widehat{H}^{*(r-1)} + \zeta(X)
             \sum_{j=1}^{r-1} (\textbf{m}!)^{r-j-1}(\textbf{m}^{\textbf{m}})^{j-1}
                   \,\, q^{r-1} \widehat{H}^{*r} \nonumber \\
    &+\textbf{m}^{(r-1)\textbf{m}-1}(\chi(X)-r)q^{r-1} \widehat{H}^{*r}. \nonumber
\end{align}
By definition of $\zeta(X)$, we have
\[ \zeta(X) \sum_{j=1}^{r-1} (\textbf{m}!)^{r-j-1}(\textbf{m}^{\textbf{m}})^{j-1}
     = \textbf{m}^{-1}(r+1-\chi(X))(\textbf{m}^{(r-1)\textbf{m}}-(\textbf{m}!)^{r-1}).\]
So
\begin{align}
\Delta*\widehat{H}^{*(r-1)}
    =&\beta \widehat{H}^{*(r-1)} + \omega \widehat{H}^{*r},
                         \label{eq: delta acts on H+m!q^r-1}
\end{align}
where $\omega:= \left( \textbf{m}^{(r-1)\textbf{m}}-(r+1-\chi(X))(\textbf{m}!)^{r-1} \right) \textbf{m}^{-1} q^{r-1}$.

By equations \eqref{eq: delta-beta} and \eqref{eq: formula for (H+m!q)*(r+1) in m=r+L}, for $0 \leq j \leq r-2$, we have
\begin{equation} \label{eqn:DeltaHj<r-1}
 \Delta*\widehat{H}^{*j} = \beta \widehat{H}^{*j}  + \xi \widehat{H}^{*(j+1)} + \sum_{k=j+2}^r c_k  \widehat{H}^{*j}
\end{equation}
for some $c_k \in \mathbb{Q}[q]$. The precise value of $c_k$ will not be used in proofs below.

Let $A=(A_{ij})_{(r+1)\times(r+1)}$ be the matrix of quantum multiplication by
$\Delta$ on $H^*(X)^{\text{res}}$ with respect to the basis
$\{ \widehat{H}^{*(r-j)} \mid 0 \leq j \leq r \}$, i.e.
\[
\Delta*\widehat{H}^{*(r-j)}=\sum_{i=0}^rA_{ij}\widehat{H}^{*(r-i)}.
\]

\noindent
By Equations  \eqref{eq: delta acts on H+m!q^r}, \eqref{eq: delta acts on H+m!q^r-1}, and \eqref{eqn:DeltaHj<r-1}, $A$ is of the following form

\begin{equation}
    A= \begin{bmatrix}
        \alpha & \omega & * & \cdots & * &*\\
        0 & \beta & \xi & \cdots & * &*\\
        0 & 0 & \beta & \cdots & * & *\\
        \vdots&\vdots&\vdots&\ddots&\vdots & \vdots\\
        0&0&0&\cdots& \beta & \xi\\
        0&0&0&\cdots& 0 & \beta
    \end{bmatrix}.       \label{mat: matrix form of multiply delta}
\end{equation}


To consider complexity, we set $q=1$.
Since $A$ is upper triangular, we can see immediately that the only eigenvalues of $A$ are $\alpha$ and $\beta$
with algebraic multiplicity $1$ and $r$ respectively. Since $\alpha$ and $\beta$ are real numbers, by Theorem \ref{thm:RealEigenvalue} in the appendix, $\mathfrak{S}_{\infty}$ contains at most 2 points
if the reference state $S_0 = [z]$ for some $z \in H^*(X)^{\text{res}}$.
To prove this is also true for arbitrary reference states, we need the following property for the Jordan canonical form of $A$.

\begin{lemma}\label{lem: jordan form of A is size r}
    The Jordan canonical form of matrix $A$ contains a Jordan block of size at least $r$.
\end{lemma}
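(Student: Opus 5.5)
The eigenvalue $\beta$ has algebraic multiplicity $r$. I will show its geometric multiplicity is $1$ when $\alpha\neq\beta$; then the $\beta$-part of the Jordan form is a single block of size $r$. If instead $\alpha=\beta$, I will show the geometric multiplicity of this eigenvalue (now of algebraic multiplicity $r+1$) is at most $2$. Then the Jordan form has at most two blocks summing to $r+1$, so one block has size at least $\lceil (r+1)/2\rceil$. That is weaker than $r$, so this case needs a separate look, discussed at the end.

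The core step is to compute $\mathrm{rank}(A-\beta I)$ directly from the shape \eqref{mat: matrix form of multiply delta} (with $q=1$). Index rows and columns $0,\ldots,r$ as in the definition of $A$. The submatrix of $A-\beta I$ obtained by deleting row $r$ and column $0$ is upper triangular. Its diagonal entries are $\omega$ at position $(0,1)$ and $\xi$ at positions $(i,i+1)$ for $1\le i\le r-1$. The latter are the superdiagonal entries $\xi$ coming from \eqref{eqn:DeltaHj<r-1}, since column $j$ corresponds to $\widehat H^{*(r-j)}$ and $\Delta*\widehat H^{*(r-j)}$ has $\widehat H^{*(r-j+1)}$-coefficient $\xi$.

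Since $\xi\neq0$ (because $\zeta(X)\neq0$), the $(r-1)\times(r-1)$ minor using rows $1,\ldots,r-1$ and columns $2,\ldots,r$ is $\xi^{r-1}\neq0$. Hence $\mathrm{rank}(A-\beta I)\ge r-1$, and the geometric multiplicity of $\beta$ is at most $2$.

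To reach $1$, I use the $\alpha$-eigenvector. When $\alpha\ne\beta$, the vector $e_0$ (the coordinate vector of $\widehat H^{*r}$) is an eigenvector for $\alpha$ and spans the $\alpha$-eigenspace. The $\beta$-generalized eigenspace therefore has dimension $r$. Restricted to it, $A-\beta I$ acts like the quotient action on $\mathbb{C}^{r+1}/\langle e_0\rangle$. In the basis $e_1,\dots,e_r$ this quotient matrix is $\beta I$ plus a strictly upper triangular matrix with nonzero superdiagonal $\xi$. A strictly upper triangular nilpotent matrix with all superdiagonal entries nonzero has rank $r-1$, so it is a single Jordan block of size $r$. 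Since the quotient map gives an isomorphism of the generalized $\beta$-eigenspace onto the quotient, commuting with $A$, we get one Jordan block of size $r$ for $\beta$. This argument uses only $\xi\neq0$ and needs neither $\omega$ nor the starred entries.

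The main obstacle is therefore the possibility $\alpha=\beta$. Here I will use the same quotient argument: $\langle e_0\rangle$ is invariant, and the induced operator on the quotient is still a single nilpotent-plus-scalar block of size $r$. The Jordan type of $A$ is then either one block of size $r+1$, or blocks of sizes $r$ and $1$. I need to justify that a block of size $r$ survives; both possibilities contain a block of size at least $r$. Indeed, if $N$ is nilpotent on $V$ with an invariant line $\ell$ and $N$ induces a single block of size $r$ on $V/\ell$, then $N^{r-1}\neq0$ on $V$, because it is nonzero on the quotient. So $N$ has a block of size at least $r$. This argument covers both cases uniformly, and I would write the proof this way from the start: $\langle e_0\rangle$ is $A$-invariant by \eqref{eq: delta acts on H+m!q^r}, the quotient operator has nonzero superdiagonal $\xi$, so $(A-\lambda I)^{r-1}\neq0$ on the appropriate generalized eigenspace, and this gives a block of size at least $r$.
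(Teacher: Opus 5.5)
Your final uniform argument is correct and is essentially the paper's proof. Like the paper, you pass to the quotient by the invariant line spanned by $\widehat{H}^{*r}$, use $\xi\neq 0$ on the superdiagonal to get a single Jordan block of size $r$ there, and lift back through the isomorphism of the generalized $\beta$-eigenspace with the quotient when $\alpha\neq\beta$. The only difference is the case $\alpha=\beta$: the paper reads off the entry $\xi^{r-1}$ of $(A-\beta I)^{r-1}$ directly, whereas you deduce $(A-\beta I)^{r-1}\neq 0$ from the quotient, which is equivalent (and your preliminary rank computation is not needed).
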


\begin{proof}
    Let $I$ be the identity matrix. If $\alpha =\beta$, then $A-\beta I$ is an upper triangular matrix with all
diagonal entries equal to 0. By a straightforward induction on $i$, we can show that
all entries of $(A-\beta I)^i$ below the  upper $i$-th subdiagonal are zero and all entries along the upper
$i$-th subdiagonal are $\xi^i$ except possibly the first entry.
This implies that $(A-\beta I)^{r-1} \neq 0$ since its $(2, r+1)$ entry is $\xi^{r-1} \neq 0$. Hence
$A$ must have a Jordan block of size bigger than or equal to $r$.

Assume $\alpha \neq \beta$. Then $\alpha$ is an eigenvalue of $A$ with
multiplicity equal to 1. Hence $A$ has a Jordan block of size 1 with eigenvalue $\alpha$.
By equation \eqref{eq: delta acts on H+m!q^r}, $\widehat{H}^{*r}$ is an eigenvector of quantum multiplication
by $\Delta$ with eigenvalue $\alpha$. Hence the quantum multiplication
by $\Delta$ induces a linear automorphism, denoted by $\Psi$,
on vector space $H^*(X)^{\rm res} / \mathbb{C} \widehat{H}^{*r}$.
Let $\pi: H^*(X)^{\rm res}  \longrightarrow H^*(X)^{\rm res} / \mathbb{C} \widehat{H}^{*r}$
be the canonical projection map. Then the matrix of $\Psi$ with respect to the basis
$\{ \pi(\widehat{H}^{*i}) \mid 0 \leq i \leq r-1\}$ is of the form
    \[
    \bar{A} := \begin{bmatrix}
        \beta & \xi &*&\cdots & * &*\\
        0& \beta &\xi &\cdots & * &*\\
        0&0& \beta &\cdots & * &*\\
        \vdots&\vdots&\vdots&\ddots&\vdots & \vdots \\
        0&0&0&\cdots& \beta & \xi \\
        0&0&0&\cdots & 0 & \beta
    \end{bmatrix}.
    \]
Note that $\bar{A}- \beta I$ is an $r \times r$ upper triangular matrix with all diagonal entries equal to $0$.
Hence the $(1, r)$ entry of $(\bar{A}- \beta I)^{r-1}$ is $\xi^{r-1} \neq 0$. Consequently $(\bar{A}- \beta I)^{r-1} \neq 0$.
So $\Psi$ has a single Jordan block of size $r$.

On the other hand,
let $\{\widehat{H}^{*r},v_1,\cdots,v_r\}$ be a basis of $H^*(X)^{\rm res}$
with respect to which  the matrix of quantum multiplication by $\Delta$ is a Jordan canonical form.
Then $\{\pi(v_1), \cdots, \pi(v_r)\}$ is also a basis of $H^*(X)^{\rm res} / \mathbb{C} \widehat{H}^{*r}$.
The matrix of $\Psi$ with respect to this basis  is just the Jordan block of $\Delta$ corresponding to eigenvalue $\beta$,
hence it must have size $r$ as proved in the previous paragraph.
It follows that $\Delta$, or equivalently $A$, has a Jordan block of size $r$. The lemma is thus proved.
\end{proof}

Now we consider an arbitrary reference state $S_0=[z]$ for $z \in H^*(X)$.

\begin{lemma} \label{lem:JordanBz*}
Suppose $W$ is a subspace of $H^*(X)^{\text{res}}$ which is invariant under quantum multiplication by $\Delta$
and the Jordan canonical form of $\Delta$ restricted to $W$ consists of single Jordan block with eigenvalue $\lambda \neq 0$.
For any $z \in H^*(X)$, either $z * W = \{0\}$, or the Jordan canonical form of $\Delta$ restricted to $z * W$
also consists of single Jordan block with eigenvalue $\lambda$.
\end{lemma}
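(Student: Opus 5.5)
The plan is to study the linear map $\phi_z: W \to z*W$, $w \mapsto z*w$. The key point is that $\phi_z$ intertwines the two actions of $\Delta$. By associativity and (super)commutativity of the quantum product, $\Delta*(z*w) = z*(\Delta*w)$ for every $w\in W$. Here $\Delta$ lies in even degree, so no sign appears even when $z$ has odd or primitive components. Consequently $z*W$ is invariant under quantum multiplication by $\Delta$, and
\[
\phi_z\circ(\Delta*|_W) = (\Delta*|_{z*W})\circ \phi_z .
\]
Thus $\Delta*|_{z*W}$ is the map induced by $\Delta*|_W$ on the quotient $W/\ker\phi_z$, via the isomorphism $W/\ker\phi_z \cong z*W$.

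The kernel $\ker\phi_z$ is a $\Delta$-invariant subspace of $W$, by the same commutation identity. Let $N := \Delta*|_W - \lambda\,\mathrm{id}$. Since $\Delta*|_W$ has a single Jordan block, $N$ is nilpotent with one Jordan block of size $s=\dim W$. The invariant subspaces of such an operator form a chain: they are exactly $\ker N^i$ for $0\le i\le s$. To see this, note that any invariant subspace $U$ of dimension $i$ is killed by $N^i$, because $N|_U$ is nilpotent on an $i$-dimensional space. Hence $U\subset\ker N^i$, and $\dim\ker N^i = i$ forces equality.

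So $\ker\phi_z = \ker N^i$ for some $i$. If $i=s$, then $z*W=\{0\}$. Otherwise the quotient $W/\ker N^i$ is nonzero, and on it $N$ induces a nilpotent operator with a single Jordan block of size $s-i$. Concretely, pick a cyclic vector $v$ for $N$. Then the images of $v, Nv, \dots, N^{s-i-1}v$ form a basis of the quotient on which $N$ acts as a single nilpotent Jordan block. Transporting this through the isomorphism $\phi_z$, the operator $\Delta*|_{z*W}$ has the single eigenvalue $\lambda$ with a single Jordan block of size $s-i$.

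The main obstacle is the classification of the invariant subspaces of a single Jordan block, which is where the single-block hypothesis is actually used. The rest is the formal intertwining argument. The hypothesis $\lambda\neq0$ is not needed for the argument itself. It only ensures that the resulting block is invertible, which is what the later application to $\mathfrak{S}_\infty$ requires.
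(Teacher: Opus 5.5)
Your argument is correct, and it takes a different route from the paper. The paper works directly with a Jordan basis $v_1,\dots,v_n$ of $W$. It takes the smallest $k$ with $z*v_k\neq 0$ and observes that $z*v_k,\dots,z*v_n$ again satisfy the Jordan chain relations for $\Delta*$. It then invokes Lemma~\ref{lem:1JordanBlock} from the appendix, which says that such a chain headed by a nonzero eigenvector is linearly independent and is proved there by an asymptotic argument that uses $\lambda\neq 0$. This shows that these vectors form a basis of $z*W$.

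You instead identify $z*W$ with the quotient $W/\ker\phi_z$ as modules over $\mathbb{C}[\Delta]$. You then use the fact that the invariant subspaces of a single nilpotent Jordan block are exactly the $\ker N^i$. Your kernel $\ker N^{i}$ is precisely the paper's $\mathrm{span}\{v_1,\dots,v_{k-1}\}$ with $i=k-1$. The paper gets the ``exactly'' part from linear independence of the image chain; you get it from invariance of the kernel plus the classification.

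The paper's version is more hands-on. It produces an explicit Jordan basis $\{z*v_j\}_{j\geq k}$ of $z*W$, and it reuses the appendix lemma that is needed anyway for Theorem~\ref{thm:RealEigenvalue}. Yours is more conceptual and isolates the single-block hypothesis as the only real input. It also correctly shows that $\lambda\neq 0$ is unnecessary for this lemma. That hypothesis enters the paper's proof only through the method used to prove Lemma~\ref{lem:1JordanBlock}, since a Jordan chain with nonzero head is linearly independent for any eigenvalue.
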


\begin{proof}
Since $W$ is invariant under quantum multiplication by $\Delta$,
so is $z*W$.
Assume $z * W \neq \{0\}$. Let $\{v_1, \ldots, v_n\}$ be a basis of $W$ with respect to which the matrix of quantum multiplication by $\Delta$ is a Jordan block. Then
\[ \Delta * v_1 = \lambda v_1, \hspace{20pt} \Delta * v_i = \lambda v_i + v_{i-1} \]
for $2 \leq i \leq n$.
Let $k$ be the smallest integer such that $z * v_k \neq 0$. Then
\[ \Delta * (v_k * z) = \lambda (v_k * z), \hspace{20pt} \Delta * (v_i*z) = \lambda (v_i*z) + v_{i-1}*z \]
for $k+1 \leq i \leq n$.
By Lemma \ref{lem:1JordanBlock}, vectors $v_i * z$ with $k \leq i \leq n$ are linearly independent, and therefore
form a basis of $z * W$. The above formulas also show that with respect to this basis, the
matrix of quantum multiplication by $\Delta$ is a Jordan block with eigenvalue $\lambda$.
The lemma is thus proved.
\end{proof}

\begin{lemma}\label{lem: eigenv of delta are real}
    For any $z \in H^*(X)$, if $z*H^*(X)^{\text{res}} \neq \{0\}$, then all eigenvalues of quantum multiplication  by $\Delta$ on $z*H^*(X)^{\text{res}}$ are real numbers.
\end{lemma}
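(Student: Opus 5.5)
The approach is to realize $z*H^*(X)^{\text{res}}$ as a quotient of $H^*(X)^{\text{res}}$ compatible with the action of $\Delta$, so that its eigenvalues are among those of $A$, namely $\alpha$ and $\beta$ (with $q=1$).

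Consider the linear map
\[
\phi_z: H^*(X)^{\text{res}} \longrightarrow z*H^*(X)^{\text{res}}, \qquad x \mapsto z*x.
\]
Quantum multiplication is associative and (super)commutative. Since $\Delta$ has even degree and $H^*(X)^{\text{res}}$ consists of even classes, there are no sign issues in
\[
\Delta*(z*x) = z*(\Delta*x).
\]
So $\phi_z$ intertwines the action of $\Delta$ on $H^*(X)^{\text{res}}$ with its action on $z*H^*(X)^{\text{res}}$. The space $H^*(X)^{\text{res}}$ is invariant under $\Delta*$ by Graber's result (Proposition 5.1 in \cite{BuP}), since $\Delta$ itself lies in $QH^*(X)^{\text{res}}$ by equation \eqref{eq: form of delta in H+m!q}. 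Consequently $z*H^*(X)^{\text{res}}$ is invariant under $\Delta*$.

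The map $\phi_z$ is surjective, and its kernel $\ker\phi_z$ is a $\Delta$-invariant subspace. Hence $\phi_z$ induces an isomorphism
\[
H^*(X)^{\text{res}}/\ker\phi_z \;\cong\; z*H^*(X)^{\text{res}}
\]
of $\mathbb{C}[\Delta]$-modules.

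The characteristic polynomial of $\Delta$ on $H^*(X)^{\text{res}}$ factors as the product of its characteristic polynomials on $\ker\phi_z$ and on the quotient. Therefore every eigenvalue of $\Delta$ on $z*H^*(X)^{\text{res}}$ is a root of the characteristic polynomial of $A$. Since $A$ in \eqref{mat: matrix form of multiply delta} is upper triangular, these roots are just $\alpha$ and $\beta$ evaluated at $q=1$.

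Both $\alpha$ and $\beta$ are real, because every ingredient is rational:
\[
\beta=\zeta(X)(\textbf{m}!)^{r-1}, \qquad \alpha=\bigl(\textbf{m}^{-1}-\textbf{m}^{-r\textbf{m}-1}(\textbf{m}!)^r(r+1-\chi(X))\bigr)\textbf{m}^{r\textbf{m}},
\]
and $\chi(X)$, $\textbf{m}^{\pm 1}$, $\textbf{m}!$ are all rational. The hypothesis $z*H^*(X)^{\text{res}}\neq\{0\}$ only ensures the space is nonzero, so that it actually has eigenvalues.

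The only point needing care is the commutativity $\Delta*(z*x)=z*(\Delta*x)$ when $z$ has odd degree (primitive classes with $r$ odd). This reduces to supercommutativity, which contributes no sign because $\Delta$ and $x$ are even.
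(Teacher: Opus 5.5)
Your argument is correct, and it takes a different, more economical route than the paper. The paper works through Jordan structure in three steps:
\begin{enumerate}
\item Lemma \ref{lem: jordan form of A is size r}, using $\xi\neq 0$, shows that $\Delta$ on $H^*(X)^{\text{res}}$ has a $\beta$-Jordan block of size at least $r$.
\item Lemma \ref{lem:JordanBz*}, using $\beta\neq 0$ and Lemma \ref{lem:1JordanBlock}, shows that multiplying a single Jordan block by $z$ gives either zero or again a single block with the same eigenvalue.
\item A case analysis covers whether that block has size $r+1$ or $r$, and, in the latter case, whether $z*v_0$ lies in $z*W$.
\end{enumerate}
You instead observe that $x\mapsto z*x$ is a surjective $\Delta$-equivariant map, because $\Delta$ is even and hence central. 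So $z*H^*(X)^{\text{res}}$ is a quotient $\mathbb{C}[\Delta]$-module of $H^*(X)^{\text{res}}$, and its spectrum lies among the diagonal entries $\alpha,\beta$ of the triangular matrix $A$. This needs neither $\xi\neq 0$, nor $\beta\neq 0$, nor the Jordan-block lemmas, and it applies to any central element acting on any invariant subspace. In exchange, the paper's route gives finer information: the Jordan form of $\Delta$ on $z*H^*(X)^{\text{res}}$, namely a single $\beta$-block plus possibly an $\alpha$-eigenline. That extra information is not used afterwards, since Theorem \ref{thm:RealEigenvalue} only requires the eigenvalues to be real.
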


\begin{proof}
    By Lemma \ref{lem: jordan form of A is size r}, the quantum multiplication  by $\Delta$ on $H^*(X)^{\text{res}}$
    has a Jordan block $J$ with size at least $r$. Since $\beta$ is an eigenvalue of algebraic multiplicity equal to $r$
    and $\dim H^*(X)^{\text{res}} = r+1$,
    the eigenvalue of $J$ must be $\beta$ which is not 0 by assumption.
    If the size of $J$ is $r+1$, then this action has a single Jordan block. So by Lemma \ref{lem:JordanBz*},
    the action of $\Delta$ on $z*H^*(X)^{\text{res}}$ also has a single Jordan block and its only eigenvalue is $\beta$.

    Suppose the size of $J$ is $r$. Then the action of $\Delta$ on $H^*(X)^{\text{res}}$ also has a Jordan block of size 1
    with eigenvalue $\alpha$.
    So $H^*(X)^{\text{res}}$ can be decomposed as a direct sum $\mathbb{C} v_0 \oplus W$, where $v_0$ is an eigenvector
    of $\Delta$ with eigenvalue $\alpha$, $W$ is invariant under the action of $\Delta$ and the restriction of this action
     to $W$ has a single
    Jordan block equal to $J$. By Lemma \ref{lem:JordanBz*}, the action of $\Delta$ on $z*W$ also has a single Jordan block with eigenvalue $\beta$. Note that $\Delta*(z * v_0) = \alpha(z * v_0)$.
    If $z * v_0 \notin z*W$, then the decomposition $z*H^*(X)^{\text{res}} = \mathbb{C} (z * v_0) \oplus (z*W)$ induces the Jordan decomposition of the action of $\Delta$ on $z*H^*(X)^{\text{res}}$. Hence eigenvalues of this action are either
    $\alpha$ or $\beta$. If $z * v_0 \in z*W$, then $z*H^*(X)^{\text{res}} = z*W$ and the only eigenvalue
    of the action by $\Delta$ on this space is $\beta$.
    In all above cases, eigenvalues are real numbers. This proves the lemma.
\end{proof}

Since $z \in z*H^*(X)^{\text{res}}$, an immediate consequence of Lemma \ref{lem: eigenv of delta are real} and Theorem \ref{thm:RealEigenvalue} is the following

\begin{proposition}\label{prop: Sinfty for last case}
   For any reference state $S_0=[z]$, $\mathfrak{S}_\infty$ contains at most two points.
\end{proposition}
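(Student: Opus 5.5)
The plan is to reduce to the orbit of $z$ under $\Delta$, restricted to a finite-dimensional $\Delta$-invariant subspace, and then apply Theorem \ref{thm:RealEigenvalue} from the appendix. That theorem bounds $\mathfrak{S}_\infty$ for a linear operator with real eigenvalues acting on projective space.

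\textbf{Step 1: the orbit lies in $W_z$.} Set $W_z := z * H^*(X)^{\text{res}}$. Since $\mathbf{1} \in H^*(X)^{\text{res}}$, we have $z \in W_z$. Since $H^*(X)^{\text{res}}$ (after setting $q=1$) is closed under quantum multiplication by $\Delta$, and $\Delta \in H^*(X)^{\text{res}}$ by equation \eqref{eq: form of delta in H+m!q}, the space $W_z$ is invariant under $\Delta *$. Hence every state $\Delta^{*k} * S_0 = [\Delta^{*k} * z]$ lies in $\mathbb{P}W_z$, which is a closed subset of $\mathbb{P}H^*(X)$. So all states of finite complexity, together with their limit points (in particular $\mathfrak{S}_\infty$), lie in $\mathbb{P}W_z$. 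Moreover, $\mathfrak{S}_\infty$ computed in $\mathbb{P}H^*(X)$ coincides with the set computed for the restricted operator $\Delta *|_{W_z}$ on $\mathbb{P}W_z$, because the subspace topology agrees.

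\textbf{Step 2: the eigenvalues on $W_z$.} If $z * H^*(X)^{\text{res}} = \{0\}$, then $z=0$, which is impossible for a state, so $W_z \neq \{0\}$. By Lemma \ref{lem: eigen v of delta are real}, all eigenvalues of $\Delta *$ on $W_z$ are real. The proof of that lemma gives more: these eigenvalues are among $\{\alpha, \beta\}$, so there are at most two distinct ones.

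\textbf{Step 3: apply the appendix theorem.} Applying Theorem \ref{thm:RealEigenvalue} to the operator $\Delta *$ on $W_z$ with reference vector $z$ yields the bound of at most two points for $\mathfrak{S}_\infty$. This matches the count used in the sentence preceding Lemma \ref{lem: jordan form of A is size r}, where the same theorem produced at most 2 points for eigenvalues $\alpha, \beta$.

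The main obstacle is checking that the hypotheses of Theorem \ref{thm:RealEigenvalue} are met. I expect it bounds $|\mathfrak{S}_\infty|$ by roughly the number of distinct eigenvalues of maximal modulus, counted up to sign. With $\alpha \neq \pm\beta$ or $\alpha = \beta$, the dominant generalized eigenspace contributes one limit point. With $\alpha = -\beta$, the parity of $k$ gives two limits. Either way the count is at most two, and the argument has to make sure Jordan-block growth inside the $\beta$-block yields only one limit direction, namely the top eigenvector. If the appendix theorem is stated only for invertible operators, I will also note that $\beta \neq 0$. If $\alpha = 0$ occurs, I will pass to the image $\Delta * W_z$, which changes only finitely many initial states and so does not affect $\mathfrak{S}_\infty$.
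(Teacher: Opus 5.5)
Your proposal is correct and follows the paper's argument. The paper also observes that the orbit of $z$ stays in the $\Delta$-invariant subspace $z*H^*(X)^{\text{res}}$, where the eigenvalues of $\Delta*$ are real (indeed they lie in $\{\alpha,\beta\}$), and then applies Theorem~\ref{thm:RealEigenvalue}. The extra precautions in your Step 3 (invertibility, $\alpha=0$, passing to $\Delta*W_z$) are harmless but unnecessary: that theorem assumes only that the eigenvalues are real, and it handles the case $M^k z=0$ itself.
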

This finishes the proof of Theorem \ref{thm:ComplexityFCI} and thus also completes the proof for
Theorem~\ref{thm:  main thm 1}.
We now consider the space $\mathfrak{F}$ defined by Equation \eqref{eqn:F}.

\begin{proposition} \label{prop:dimFCI}
We have
   $\dim(\mathfrak{F})=r+1$ and $\mathfrak{F}=H^*(X)^{\text{res}}$ if
\begin{equation} \label{eqn:omega=0}
    r+1-\chi(X) \neq \textbf{m}^{(r-1)\textbf{m}} (\textbf{m}!)^{1-r}.
\end{equation}
    Otherwise $\dim(\mathfrak{F})=r$.
\end{proposition}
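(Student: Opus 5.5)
The plan is to work entirely inside $H^*(X)^{\text{res}}$, where $\mathbf{1}$ lives and which is invariant under $\Delta*$. We use the basis $\{\widehat{H}^{*(r-j)}\}$ and the upper triangular matrix $A$ in \eqref{mat: matrix form of multiply delta}, with $q=1$. Then $\mathfrak{F}$ is the cyclic subspace generated by $\mathbf{1}$ under $\Delta*$. Its dimension equals the degree of the minimal polynomial of $\Delta*$ restricted to the cyclic vector $\mathbf{1}$. We therefore have to decide whether this cyclic subspace is all of $H^*(X)^{\text{res}}$ (dimension $r+1$) or has dimension $r$.

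First I show $\dim\mathfrak{F}\ge r$. Work modulo $\mathbb{C}\widehat{H}^{*r}$, which is $\Delta$-invariant by \eqref{eq: delta acts on H+m!q^r}. The induced matrix $\bar{A}$ is upper triangular with diagonal $\beta$ and superdiagonal $\xi\ne0$, and $\pi(\mathbf{1})$ is the last basis vector. By induction, $(\bar A-\beta I)^i\pi(\mathbf{1})$ has last nonzero coordinate $\xi^i$ in position $r-i$. Hence $\pi(\mathbf{1}),(\bar A-\beta I)\pi(\mathbf{1}),\dots,(\bar A-\beta I)^{r-1}\pi(\mathbf{1})$ are linearly independent. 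Consequently $\mathbf{1},\Delta,\dots,\Delta^{*(r-1)}$ are independent modulo $\widehat{H}^{*r}$, so $\dim\mathfrak{F}\ge r$. The same computation shows that $\mathfrak{F}+\mathbb{C}\widehat{H}^{*r}=H^*(X)^{\text{res}}$. Thus $\dim\mathfrak{F}=r+1$ exactly when $\widehat{H}^{*r}\in\mathfrak{F}$.

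Next I decide whether $\widehat{H}^{*r}\in\mathfrak{F}$, using $p(t)=(t-\beta)^r$. Modulo $\widehat{H}^{*r}$ we have $p(\Delta)*\mathbf{1}\equiv0$, so $p(\Delta)*\mathbf{1}=c\,\widehat{H}^{*r}$ for some scalar $c$. If $c\neq0$, then $\widehat{H}^{*r}\in\mathfrak{F}$ and $\dim\mathfrak{F}=r+1$. If $c=0$, then $\mathfrak{F}$ is annihilated by a degree-$r$ polynomial, so $\dim\mathfrak{F}\le r$ and hence equals $r$.

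To compute $c$, write $(\Delta-\beta)^{r-1}*\mathbf{1}=\xi^{r-1}\widehat{H}^{*(r-1)}+e\,\widehat{H}^{*r}$. Applying $(\Delta-\beta)$ and using \eqref{eq: delta acts on H+m!q^r-1} and \eqref{eq: delta acts on H+m!q^r} gives
\[
c=\xi^{r-1}\omega+e(\alpha-\beta).
\]
When $\alpha=\beta$ this is simply $c=\xi^{r-1}\omega$, so $c=0\iff\omega=0$. Since $\omega=0$ is exactly the negation of \eqref{eqn:omega=0} (with $q=1$), this proves the proposition in that case.

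The main obstacle is the case $\alpha\neq\beta$, where the coefficient $e$ appears. The cleaner route there is spectral. $\mathfrak{F}$ fails to be everything iff $\mathbf{1}$ has zero component along the $\alpha$-eigenline $\mathbb{C}\widehat{H}^{*r}$ in the decomposition $\mathbb{C}\widehat{H}^{*r}\oplus W$, where $W$ is the generalized $\beta$-eigenspace. That component is detected by the left $\alpha$-eigenvector $\ell$ of $A$, i.e. by the pairing $\ell(\mathbf{1})$. I would compute $\ell$ using the Frobenius property \eqref{eqn:CompQPpairing}. The functional $x\mapsto\langle x,\widehat{H}^{*r}\rangle$ is a left $\alpha$-eigenvector, because $\langle\Delta*x,\widehat{H}^{*r}\rangle=\langle x,\Delta*\widehat{H}^{*r}\rangle=\alpha\langle x,\widehat{H}^{*r}\rangle$. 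So $\mathfrak{F}\neq H^*(X)^{\text{res}}$ iff $\langle\mathbf{1},\widehat{H}^{*r}\rangle=0$, provided the pairing is nondegenerate on $H^*(X)^{\text{res}}$.

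It then remains to evaluate $\int_X\widehat{H}^{*r}$ at $q=1$. Expanding $(H+\textbf{m}!)^{*r}$ and using that only the $H^{r}$-component contributes to the integral, I would evaluate this with Givental's relation \eqref{eq: formula for (H+m!q)*(r+1) in m=r+L} and check that its vanishing is equivalent to $\omega=0$. I expect this verification, reconciling the integral with the explicit form of $\omega$, to be the delicate computational step.
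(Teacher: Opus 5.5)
Your reduction is correct up to the last step. The bound $\dim\mathfrak{F}\ge r$, the equality $\mathfrak{F}+\mathbb{C}\widehat{H}^{*r}=H^*(X)^{\text{res}}$, the relation $(\Delta-\beta\mathbf{1})^{*r}=c\,\widehat{H}^{*r}$ with $c=\xi^{r-1}\omega+e(\alpha-\beta)$, and your treatment of the case $\alpha=\beta$ all agree with the paper's argument (your $c$ is its $C$).

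The gap is that you never connect the case split $\alpha=\beta$ / $\alpha\neq\beta$ to the condition $\omega=0$ / $\omega\neq0$ in the statement. The step you defer also cannot be carried out in the form you propose. Count degrees, with $\deg q=1$. For $k<r$, $H^{*k}$ has no component in $H^{2r}(X)$, and the $H^{2r}$-component of $H^{*r}$ is the classical $H^r$. Hence $\int_X\widehat{H}^{*r}=\int_X H^r=\prod_i m_i\neq0$ for every $X$, so its vanishing is not equivalent to $\omega=0$. What your spectral argument really proves is that $\dim\mathfrak{F}=r+1$ whenever $\alpha\neq\beta$. To get the proposition you still need $\omega\neq0$ in that case, i.e.\ that $\omega=0$ forces $\alpha=\beta$, and nothing in your proposal gives this.

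The missing fact is the identity $\alpha-\beta=\textbf{m}^{\textbf{m}}q\,\omega$. The paper invokes it as ``straightforward to check'' from the explicit formulas. It also follows in one line. Multiply $\Delta*\widehat{H}^{*(r-1)}=\beta\widehat{H}^{*(r-1)}+\omega\widehat{H}^{*r}$ by $\widehat{H}$ and use $\widehat{H}^{*(r+1)}=\textbf{m}^{\textbf{m}}q\,\widehat{H}^{*r}$. This gives $\alpha\widehat{H}^{*r}=\Delta*\widehat{H}^{*r}=(\beta+\textbf{m}^{\textbf{m}}q\,\omega)\widehat{H}^{*r}$. With it, your two cases are exactly $\omega=0$ and $\omega\neq0$.

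Moreover, the case $\alpha\neq\beta$, which you treat as the main obstacle, needs neither spectral theory nor the integral. Since $\mathbf{1}$ is the unit, $c\,\widehat{H}^{*r}$ is the element $(\Delta-\beta\mathbf{1})^{*r}$ itself. Then $(\Delta-\beta\mathbf{1})^{*r}*\widehat{H}^{*r}=(\alpha-\beta)^r\widehat{H}^{*r}\neq0$ forces $c\neq0$. This is exactly how the paper concludes.
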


\begin{proof}
Since $\mathbf{1} \in \mathfrak{F}$,
\begin{equation}\label{eq: a description of F}
   \mathfrak{F} = {\rm Span} \{\Delta^{*i} \mid  i \geq 0 \}
            = {\rm Span} \{(\Delta-\beta \mathbf{1})^{*i} \mid  i \geq 0 \}.
\end{equation}
For $1 \leq k \leq r$, let $W_k := {\rm Span}\{ \widehat{H}^{*i} \mid k \leq i \leq r\}$.
By equation \eqref{eq:  delta-beta},
\[ (\Delta-\beta \mathbf{1}) - \xi \widehat{H} \in W_2. \]
Hence by equation \eqref{eq: formula for (H+m!q)*(r+1) in m=r+L}, for all $1 \leq i \leq r-1$, we have
\[ (\Delta-\beta \mathbf{1})^{*i} - \xi^i \widehat{H}^{*i} \in W_{i+1}.\]
It follows that $(\Delta-\beta \mathbf{1})^{*i}$ for $0 \leq i \leq r-1$ are linearly independent
since $\xi \neq 0$.
So $\dim(\mathfrak{F}) \geq  r$.
Moreover,
\[ (\Delta-\beta \mathbf{1})^{*r} = C \widehat{H}^{*r} \in W_r \] for some constant $C$.
If $C = 0$, then $(\Delta-\beta \mathbf{1})^{*r} = 0$, which also implies $(\Delta-\beta \mathbf{1})^{*j} = 0$
for all $j  \geq r$. In this case, $\dim(\mathfrak{F}) =  r$.

If $C \neq 0$, then $(\Delta-\beta \mathbf{1})^{*i}$ for $0 \leq i \leq r$ are linearly independent. In this case
$\dim(\mathfrak{F}) \geq  r+1$.
Since $\mathfrak{F} \subset H^*(X)^{\text{res}}$, $\dim(\mathfrak{F}) \leq  \dim H^*(X)^{\text{res}} = r+1$.
Hence we must have $\dim(\mathfrak{F}) = r+1$ and $\mathfrak{F} = H^*(X)^{\text{res}}$.

To prove the proposition, we only need to show that  condition \eqref{eqn:omega=0} is equivalent to $C \neq 0$.
Note that condition \eqref{eqn:omega=0} is equivalent to $\omega \neq 0$, where $\omega$ is defined after equation
\eqref{eq: delta acts on H+m!q^r-1}. It is straightforward to check that this condition
is also equivalent to $\alpha \neq \beta$.

If $\alpha \neq \beta$, then by equation \eqref{eq: delta acts on H+m!q^r},
$(\Delta-\beta \mathbf{1}) * \widehat{H}^{*r} = (\alpha - \beta) \widehat{H}^{*r}$
and \[ (\Delta-\beta \mathbf{1})^{*r} *\widehat{H}^{*r} = (\alpha - \beta)^r \widehat{H}^{*r} \neq 0.\]
Hence $(\Delta-\beta \mathbf{1})^{*r} \neq 0$, which also implies $C \neq 0$.

If $\alpha = \beta$, then $\omega=0$. Since the matrix of quantum multiplication by $\Delta$ with respect to
the basis $\{\widehat{H}^{*(r-i)} \mid 0 \leq i \leq r\}$ is given by $A$ in
formula \eqref{mat: matrix form of multiply delta},
the matrix of quantum multiplication by $\Delta -\beta \mathbf{1}$ with respect to the same basis is $A- \beta I$,
which is upper triangular with all diagonal entries equal to 0.
The $(1, r+1)$ entry of $(A- \beta I)^r$ is $\omega \xi^{r-1}$ and all other entries are $0$. Since $\omega=0$,
$(A- \beta I)^r=0$. This implies $(\Delta-\beta \mathbf{1})^{*r} = 0$, and hence $C=0$.
The proposition is thus proved.
\end{proof}

\vspace{20pt}

\hspace{100pt} {\Large \bf Appendix}
\appendix

\section{A property for matrices with real eigenvalues}
\label{sec:RealEigenvalue}

Let $M$ be an $n \times n$ matrix which acts on $\mathbb{C}^n$ by matrix multiplication.
Fix any reference state $S_0 = [z] \in \mathbb{CP}^{n-1}$ where $z \in  \mathbb{C}^n$.
Let
\[ F := \{ [M^k z] \in \mathbb{CP}^{n-1} \mid k  \geq 0, \,\,\, M^k z \neq 0 \},  \hspace{10pt}
\mathfrak{S}_\infty := \overline{F} \setminus F \]
where  $\overline{F}$ is the closure of $F$ in $\mathbb{CP}^{n-1}$.
The main purpose of this appendix is to estimate the size of
$\mathfrak{S}_\infty$  if all eigenvalues of $M$ are real.

We first consider a special case where the Jordan decomposition of $M$ has a single Jordan block.

 \begin{lemma} \label{lem:1JordanBlock}
Suppose that there exist
$v_1, \ldots, v_n \in \mathbb{C}^n$ and $\lambda \in \mathbb{C}$ such that
 \[ M v_1 = \lambda v_1, \hspace{20pt} M v_i = \lambda v_i + v_{i-1}\]
  for $2 \leq i \leq n$ with $v_1\neq0$ and $\lambda \neq 0$. Then $v_1, \ldots, v_n$ are linearly independent. Furthermore,
 if $x = \sum_{i=1}^r x_i v_i$ for some $1 \leq r \leq n$, $x_i \in \mathbb{C}$, and $x_r \neq 0$,
 then
 \begin{equation} \label{eqn:1Bolck}
  \lim_{k \rightarrow \infty} \lambda^{-k} \binom{k}{r-1}^{-1} M^k x =  \lambda^{1-r} x_r v_1.
 \end{equation}
 \end{lemma}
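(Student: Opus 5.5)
The plan is to write $M=\lambda I+N$, where $N$ acts on $V:={\rm Span}\{v_1,\ldots,v_n\}$ as the nilpotent shift $Nv_1=0$, $Nv_i=v_{i-1}$ for $i\geq 2$, and to deduce both claims from this.

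First, linear independence. We have $(M-\lambda I)^{j}v_i=v_{i-j}$ for $j<i$ and $(M-\lambda I)^{i}v_i=0$. Suppose $\sum_{i=1}^n c_i v_i=0$ is a nontrivial relation, and let $s$ be the largest index with $c_s\neq 0$. Applying $(M-\lambda I)^{s-1}$ kills every $v_i$ with $i<s$ and sends $v_s$ to $v_1$. Hence $c_s v_1=0$, contradicting $v_1\neq 0$. So $v_1,\ldots,v_n$ are linearly independent; note that $\lambda\neq 0$ is not needed for this step.

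Second, the limit. Since $\lambda I$ and $M-\lambda I$ commute on $V$, the binomial expansion gives
\[ M^k v_i=\sum_{j=0}^{i-1}\binom{k}{j}\lambda^{k-j}v_{i-j}. \]
Therefore, for $x=\sum_{i=1}^r x_i v_i$,
\[ \lambda^{-k}M^k x=\sum_{i=1}^r\sum_{j=0}^{i-1}x_i\binom{k}{j}\lambda^{-j}v_{i-j}. \]
Now divide by $\binom{k}{r-1}$. For fixed $j$ we have $\binom{k}{j}/\binom{k}{r-1}\to 0$ as $k\to\infty$ whenever $j<r-1$, since the ratio is a polynomial of degree $j$ over one of degree $r-1$. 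The only pair with $j=r-1$ is $i=r$, $j=r-1$, and it contributes exactly $x_r\lambda^{1-r}v_1$. Because the sum is finite, with at most $r^2$ terms independent of $k$, the limit follows termwise and we obtain equation \eqref{eqn:1Bolck}. The hypothesis $\lambda\neq 0$ is what makes $\lambda^{-k}$ and $\lambda^{-j}$ meaningful.

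I expect no real obstacle. The only points needing care are the index bookkeeping in the binomial expansion (only $j\le i-1$ survives, since $N^{i}v_i=0$) and the observation that the limit is taken over finitely many terms, so termwise convergence suffices.
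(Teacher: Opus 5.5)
Your proposal is correct. The limit computation is the same as the paper's. The paper proves by induction on $k$ that $M^k x=\sum_{i=1}^r\sum_{j=i}^r x_j\binom{k}{j-i}\lambda^{k-j+i}v_i$, which is your binomial expansion of $(\lambda I+N)^k$ reindexed. It then observes that after dividing by $\lambda^k\binom{k}{r-1}$ only the term with $i=1$, $j=r$ survives.

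Where you genuinely differ is in how linear independence is obtained. The paper proves the limit formula first; its derivation never uses independence. It then deduces independence from that formula: a relation $0=\sum_{i=1}^r x_i v_i$ with $x_r\neq 0$ would give $0=\lim_{k\to\infty}\lambda^{-k}\binom{k}{r-1}^{-1}M^k 0=\lambda^{1-r}x_r v_1\neq 0$. You instead apply $(M-\lambda I)^{s-1}$ to the relation to isolate $c_s v_1$.

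The paper's route gets both conclusions out of a single computation, but it needs $\lambda\neq 0$ even for the independence claim. Your nilpotency argument is more elementary. As you note, it shows that independence holds for any $\lambda$, using only $v_1\neq 0$, so $\lambda\neq 0$ is needed solely for the limit.
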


 \begin{proof}
 We first prove equation \eqref{eqn:1Bolck}. A straightforward induction on $k$ shows
 \begin{equation} \label{eqn:Akx}
 M^k x = \sum_{i=1}^r \sum_{j=i}^r x_j \binom{k}{j-i} \lambda^{k-j+i} v_i.
 \end{equation}
 Hence
 \[ \lambda^{-k} \binom{k}{r-1}^{-1} M^k x
    = \sum_{i=1}^r \sum_{j=i}^r x_j \frac{\binom{k}{j-i}}{\binom{k}{r-1}} \lambda^{i-j} v_i.
 \]
 Since  $\lim_{k \rightarrow \infty} \binom{k}{a} / \binom{k}{b} = 0$ if $a < b$,  on the right hand side of the above equation,
 only the summand with $i=1$ and $j=r$ survives after taking limit as $k \rightarrow \infty$. This proves equation
 \eqref{eqn:1Bolck}.

 If $v_1, \ldots, v_n$ are linearly dependent, then there exist $x_1, \ldots, x_r \in \mathbb{C}$ for some  $1 \leq r \leq n$ with $x_r \neq 0$ such that $0 = \sum_{i=1}^r x_i v_i$. By equation \eqref{eqn:1Bolck}, we would have
 $\lim_{k \rightarrow \infty} \lambda^{-k} \binom{k}{r-1}^{-1} M^k 0 =  \lambda^{1-r} x_r v_1$.
 This is not possible since by assumption the right hand side of this equation is not $0$.
 Hence  $v_1, \ldots, v_n$ must be linearly independent. The lemma is thus proved.
 \end{proof}

\begin{theorem} \label{thm:RealEigenvalue}
If all eigenvalues of $M$ are real, then $\mathfrak{S}_\infty$ contains at most 2 points.
\end{theorem}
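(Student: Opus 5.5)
We decompose $z$ along the generalized eigenspaces of $M$ and identify which component dominates $M^k z$ for large $k$. If $M^k z=0$ for some $k$, then $F$ is finite and $\mathfrak{S}_\infty$ is empty. So we may assume $z$ has a nonzero component outside the generalized eigenspace for the eigenvalue $0$. That component is killed by a fixed power of $M$, so after replacing $z$ by $M^{k_0}z$ we may discard it. This changes $F$ by only finitely many points and does not change $\overline F\setminus F$ (a subtlety: a point of the discarded finite part might also be a limit point; we handle this by noting that limit points are determined by the tail of the sequence).

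Next we choose a Jordan basis. Write $z=\sum_b z^{(b)}$, where $z^{(b)}$ lies in the span of the chain of Jordan block $b$, with eigenvalue $\lambda_b\neq 0$ real. For each block with $z^{(b)}\neq 0$, let $r_b$ be the largest index with nonzero coefficient, as in Lemma \ref{lem:1JordanBlock}. Let $\rho=\max|\lambda_b|$ over these blocks, and let $R$ be the maximal $r_b$ among the blocks with $|\lambda_b|=\rho$. By Lemma \ref{lem:1JordanBlock}, applied to each block (via equation \eqref{eqn:Akx}), $\rho^{-k}\binom{k}{R-1}^{-1}M^k z$ differs from $\sum (\lambda_b/\rho)^k \lambda_b^{1-R} x^{(b)}_{R} v^{(b)}_1$ by a term tending to $0$. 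The sum runs over the dominant blocks, namely those with $|\lambda_b|=\rho$ and $r_b=R$. Since eigenvalues are real, $\lambda_b/\rho\in\{1,-1\}$. Set
\[ u_\pm := \sum_{\lambda_b=\pm\rho,\ r_b=R} \lambda_b^{1-R}x^{(b)}_R v^{(b)}_1 .\]
The normalized vectors along even $k$ then converge to $u_++u_-$ and along odd $k$ to $u_+-u_-$. These limits are nonzero because the $v_1^{(b)}$ from different blocks are linearly independent (Jordan basis) and at least one coefficient is nonzero.

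Now let $S\in\mathfrak{S}_\infty$. Then $S=\lim [M^{k_j}z]$ with $k_j\to\infty$; we need $k_j\to\infty$ because if $k_j$ stayed bounded the limit would lie in $F$. Passing to a subsequence of fixed parity gives $S\in\{[u_++u_-],[u_+-u_-]\}$, by continuity of the projection $\mathbb{C}^n\setminus\{0\}\to\mathbb{CP}^{n-1}$ and the scale invariance of $[\cdot]$. Hence $\mathfrak{S}_\infty$ has at most two points.

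The main obstacle is the uniform asymptotic bookkeeping across blocks: we must check that blocks with smaller $|\lambda_b|$, or with the same modulus but smaller $r_b$, are $o(\rho^k\binom{k}{R-1})$. This follows from equation \eqref{eqn:Akx}, since each such block contributes $O(|\lambda_b|^k k^{r_b-1})$. We must also confirm that the leading vectors from distinct blocks cannot cancel, which is where linear independence of the full Jordan basis is used.
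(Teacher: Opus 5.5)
Your proposal is correct and follows essentially the same route as the paper. Both arguments take the Jordan decomposition of $z$, normalize by $\lambda^{-k}\binom{k}{r-1}^{-1}$ with $\lambda$ the dominant modulus and $r$ the maximal chain index among dominant blocks, apply Lemma~\ref{lem:1JordanBlock} blockwise, and use $\lambda_i/\lambda=\pm1$ to get at most two limit points. Your extra care (disposing of eigenvalue-$0$ blocks, checking via independence of the $v_1^{(b)}$ that the limit vectors are nonzero, and splitting by parity) fills in steps the paper leaves implicit. One small correction: discarding the first $k_0$ terms gives only $\overline F\setminus F\subseteq\overline{F'}\setminus F'$, not equality, but that inclusion is all you need.
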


\begin{proof}
Recall the reference state is $S_0=[z] \in \mathbb{CP}^{n-1}$. If $M^k z = 0$ for some $k \geq 0$, then $F$ is a finite set and $\mathfrak{S}_\infty$ is empty.
Hence we can assume $M^k z \neq 0$ for all $k \geq 0$.

By Jordan decomposition theorem, $\mathbb{C}^n$ can be decomposed as
\[ \mathbb{C}^n = W_1 \oplus \cdots \oplus W_s \]
such that each $W_i$ is an $n_i$ dimensional subspace which is invariant under the action of $M$, and the restriction of the action of $M$ to $W_i$ has a single
Jordan block. In other words, there exists a number $\lambda_i$ and
a basis $\{ v_j^i \mid 1 \leq j \leq n_i \}$ of $W_i$
such that  $M v_1^i = \lambda_i v_1^i $ and $M v_j^i = \lambda_i v_j^i + v_{j-1}^i $ for all $2 \leq j \leq n_i$.
Since $\lambda_i$ is an eigenvalue of $M$, it must be a real number.

Write $z = x^1+ \cdots + x^s$ with each $x^i \in W_i$. If $x_i \neq 0$, then there exists unique $1 \leq r_i \leq n_i$ and
$x^i_j \in \mathbb{C}$ for $1 \leq j \leq r_i$ with $x^i_{r_i} \neq 0$ such that $x^i = \sum_{j=1}^{r_i} x^i_j v^i_j$.
Let
\begin{equation} \label{eqn:rlambda}
  \lambda := \max \{ |\lambda_i| \mid 1 \leq i \leq s, \,\,\, x_i \neq 0\}, \hspace{20pt}
 r := \max \{r_j \mid 1 \leq j \leq s, |\lambda_j| = \lambda\}.
\end{equation}
Then $\lambda \neq 0$ since $M^k z \neq 0$ for all $k \geq 0$. Hence
\[
\lambda^{-k} \binom{k}{r-1}^{-1} M^k z
= \sum_{i=1}^s  \left( \frac{\lambda_i}{\lambda} \right)^k  \frac{\binom{k}{r_i-1}}{\binom{k}{r-1}}
              \cdot \lambda_i^{-k} \binom{k}{r_i-1}^{-1} M^k x^i.
\]
Note that if $|\lambda_i| < \lambda$ then
\[ \lim_{k \rightarrow \infty} \left( \frac{\lambda_i}{\lambda} \right)^k  \frac{\binom{k}{r_i-1}}{\binom{k}{r-1}} =0.\]
If  $r_i < r$, then
$\lim_{k \rightarrow \infty}  \frac{\binom{k}{r_i-1}}{\binom{k}{r-1}} =0$.
Hence by Lemma \ref{lem:1JordanBlock}, any infinite convergent subsequence of
$\{\lambda^{-k} \binom{k}{r-1}^{-1} M^k z \mid k \geq 0 \}$
must converge to a point of the form
\[ \bigg(  \sum_{\lambda_i = \lambda, \,\, r_i = r}  \lambda_i^{1-r} x^i_r v^i_1 \bigg)
    \pm \bigg( \sum_{\lambda_i = - \lambda, \,\, r_i = r}    \lambda_i^{1-r} x^i_r v^i_1 \bigg). \]
Hence in $\mathbb{CP}^{n-1}$, we have
\begin{equation} \label{eqn:SRange}
 \mathfrak{S}_\infty \subset \bigg\{ \bigg[
     \bigg(  \sum_{\lambda_i = \lambda, \,\, r_i = r}  \lambda_i^{1-r} x^i_r v^i_1 \bigg)
    \pm \bigg( \sum_{\lambda_i = - \lambda, \,\, r_i = r}    \lambda_i^{1-r} x^i_r v^i_1 \bigg)
    \bigg] \bigg\}.
\end{equation}
The theorem is thus proved.
\end{proof}

\begin{remark} \label{rem:sizeSinftyM}
By equation \eqref{eqn:SRange}, if all eigenvalues of $M$ are non-negative, or if all eigenvalues of $M$ are non-positive,
then $\mathfrak{S}_\infty$ contains at most one point.
More generally, we can also consider $M$ with possibly complex eigenvalues. In this case, we can still
define $\lambda$ using equation \eqref{eqn:rlambda}. If for every  pair of eigenvalues $\lambda_i$ and $\lambda_j$ with
$|\lambda_i| = |\lambda_j| = \lambda$, there exists a positive integer $n_{ij}$ such that $(\lambda_i/\lambda_j)^{n_{ij}}=1$, then $\mathfrak{S}_\infty$ is
finite. This can be proved by a slight modification of the proof for Theorem \ref{thm:RealEigenvalue}.  Moreover,
we can also estimate the size of $\mathfrak{S}_\infty$ from these integers $n_{ij}$.
\end{remark}

\vspace{30pt} \noindent
Xiaobo Liu \\
School of Mathematical Sciences \& \\
Beijing International Center for Mathematical Research, \\
Peking University, Beijing, China. \\
Email: {\it xbliu@math.pku.edu.cn}
\ \\ \ \\
Chongyu Wang \\
School of Mathematical Sciences, \\
Peking University, Beijing, China. \\
Email: {\it wangcyu@pku.edu.cn}

\end{document}